\theoremstyle{thmstyleone}%
\newtheorem{theorem}{Theorem}
\newtheorem{proposition}{Proposition}
\theoremstyle{thmstyletwo}%
\newtheorem{example}{Example}%
\newtheorem{remark}{Remark}%
\theoremstyle{thmstylethree}%
\newtheorem{assumption}{Assumption}
\newtheorem{lemma}{Lemma}
\def\B{{\mathcal B}}
\def\P{{\mathcal P}}
\def\M{{\mathcal M}}
\newcommand{\adk}[1]{{\color{black} #1}}
\begin{document}

\title[Finite Approximations for Mean-field Type Multi-Agent Control and Their Near Optimality]{Finite Approximations for Mean-field Type Multi-Agent Control and Their Near Optimality }


\author[1]{\fnm{Erhan} \sur{Bayraktar}}\email{erhan@umich.edu}

\equalcont{ E. Bayraktar is partially supported by the National Science Foundation under grant DMS-2106556 and by the Susan M. Smith chair.}

\author[2]{\fnm{Nicole} \sur{ B\"{a}uerle}}\email{nicole.baeuerle@kit.edu}

\author*[3]{\fnm{Ali Devran} \sur{Kara}}\email{alikara@umich.edu}

\affil[1]{\orgdiv{Department of Mathematics}, \orgname{University of Michigan}, \orgaddress{ \city{Ann Arbor},  \state{MI}, \country{USA}}}


\affil[2]{\orgdiv{Department of Mathematics}, \orgname{Karlsruhe Institute of Technology}, \orgaddress{ \city{Karlsruhe}, \country{Germany}}}

\affil[3]{\orgdiv{Department of Mathematics}, \orgname{Florida State University}, \orgaddress{ \city{Tallahassee},  \state{FL}, \country{USA}}}


\abstract{We study a multi-agent mean-field type control problem in discrete time where the agents aim to find a socially optimal strategy and where the state and action spaces for the agents are assumed to be continuous. The agents are only weakly coupled through the distribution of their state variables. The problem in its original form can be formulated as a classical Markov decision process (MDP), however, this formulation suffers from several practical difficulties. In this work, we attempt to overcome the curse of dimensionality, coordination complexity between the agents, and the necessity of perfect feedback collection from all the agents (which might be hard to do for large populations.)

 We provide several approximations: we establish the near optimality of the action and state space discretization of the agents under standard regularity assumptions for the considered formulation by constructing and studying the measure valued MDP counterpart for finite and infinite population settings. It is a well known approach to consider the infinite population problem for mean-field type models, since it provides symmetric policies for the agents which simplifies the coordination between the agents. However, the optimality analysis is harder as the state space of the measure valued infinite population MDP is continuous (even after space discretization of the agents). Therefore, as a final step, we provide two further approximations for the infinite population problem: the first one directly aggregates the probability measure space, and requires the distribution of the agents to be collected and mapped with a nearest neighbor map, and the second method approximates the measure valued MDP through the empirical distributions of a smaller sized sub-population, for which one only needs keep track of the mean-field term as an estimate by collecting the state information of a small sub-population. For each of the approximation methods, we provide provable regret bounds.

}

\maketitle


\section{Introduction}\label{intro}
The goal of this paper is to develop finite state and action approximation methods for mean-field type multi-agent control problems in discrete time. We will focus on a formulation where the agents depend on each other only through the mean-field term. i.e. the distribution of the agents' states. Furthermore, we will assume that the dynamics are symmetric for different agents that they do not depend on the identity of the agent but only their state, action and the mean-field term. This formulation is motivated for systems where the effect of a single agent on the whole system is minimal.

The dynamics for the model are presented as follows: suppose $N$ agents (decision-makers or controllers) act in a cooperative way to minimize a common objective function. The agents share a common state and an action space denoted by $\mathds{X}\subset \mathds{R}^l$ and $\mathds{U}\subset{\mathds{R}}^m$ for some $l,m<\infty$. For any time step $t$, and for some agent $i\in\{1,\dots,N\}$ we have that
\begin{align*}
x^i_{t+1}=f(x_t^i,u_t^i,\mu_{\bf x_t},w_t^i,w_t^0)
\end{align*}
for a measurable function $f$, where $\{w_t^i\}$ denotes the i.i.d. idiosyncratic noise process, and $\{w_t^0\}$ denotes the  i.i.d. common  noise process. \adk{Furthermore, $\mu_{\bf x}\in\mathcal{P}_N(\mathds{X})$ denotes the empirical distribution of the agents on the state space $\mathds{X}$ such that for a given joint state of the team of agents ${\bf x}:=(x^1,\dots,x^N)\in \mathds{X}^N$
\begin{align*}
\mu_{\bf x}(\cdot):=\frac{1}{N}\sum_{i=1}^N\delta_{x^i}(\cdot)
\end{align*}
where $\delta_{x^i}$ represents the Dirac measure centered at $x^i$.
Throughout this paper, we use the notation
\begin{align*}
\mathds{X}^N:=\underbrace{\mathds{X}\times\dots\times\mathds{X}}_{\text{ $N$ times}}
\end{align*}
to denote the space of all joint state variables of the team
equipped with the product topology on $\mathds{X}^N$.
We further define $\mathcal{P}_N(\mathds{X})$, the set of all  empirical measures on $\mathds{X}$ constructed using sequences of $N$ states in $\mathds{X}$, such that
\begin{align*}
\mathcal{P}_N(\mathds{X}):=\{\mu_{\bf x}: {\bf x}=(x^1,\dots,x^N)\in\mathds{X}^N\}.
\end{align*}
Note that $\mathcal{P}_N(\mathds{X})\subset \P(\mathds{X})$ where $\P(\mathds{X})$ denotes the set of all probability measures on $\mathds{X}$ equipped with the weak convergence topology.}

Equivalently, for a given realization of the common noise, $w_t^0$, the next state of the agent $i$ is determined by some stochastic kernel:
\begin{align}\label{kernel_common_noise}
X^i_{t+1}\sim\mathcal{T}^{w^0_t}(\cdot|x_t^i,u_t^i,\mu_{\bf x_t})
\end{align}
where $\mathcal{T}^{w^0_t}$ specifies the conditional probability distribution of the next state given the current state, action, empirical distribution, and the realization of the common noise. The randomness in the transition is due to the idiosyncratic noise $w_t^i$.

At each time stage $t$, each agent receives a cost determined by a measurable stage-wise cost function $c:\mathds{X}\times\mathds{U}\times \mathcal{P}(\mathds{X})\to \mathds{R}$.  That is, if the state, action, and the empirical  distribution of the team are given by $x_t^i,u_t^i,\mu_{\bf x_t}$, then the agent receives the cost 
\begin{align*}
c(x_t^i,u_t^i,\mu_{\bf x_t}).
\end{align*}
For the remainder of the paper, by an abuse of notation, we will occasionally represent the dynamics in terms of the collective state and action variables of the team, ${\bf x}=(x^1,\dots,x^N)$, and ${\bf u}=(u^1,\dots,u^N)$, and the collective noise variables ${\bf w}=(w^0,w^1,\dots,w^N)$. Under this notation, the dynamics can be expressed compactly as:
\begin{align}\label{col_dyn}
{\bf x_{t+1}}=f({\bf x_t,u_t,w_t}).
\end{align}
In the initial formulation, every agent is assumed to have full knowledge of the state and action variables of every other agent. We define an admissible policy for an agent $i$, as a sequence of functions $\gamma^i:=\{\gamma^i_t\}_t$, where $\gamma^i_t$ is a $\mathds{U}$-valued (possibly randomized) function that is measurable with respect to the $\sigma$-algebra generated by the information
\begin{align}\label{info}
I_t=\{{\bf x}_0,\dots,{\bf x}_t,{\bf u}_0,\dots,{\bf u}_{t-1}\}.
\end{align}
Accordingly, an admissible {\it team} policy, is defined as $\gamma:=\{\gamma^1,\dots,\gamma^N\}$, where each $\gamma^i$ is an admissible policy for the agent $i$. In other words, agents share full information.

The objective of the agents is to minimize the following cost function for some initial team state ${\bf x}_0\in\mathds{X}^N$
\begin{align*}
J^N_\beta({\bf x}_0,\gamma)=\sum_{t=0}^{\infty}\beta^tE_\gamma\left[{\bf c}({\bf x}_t,{\bf u}_t)\right]
\end{align*}
where $0<\beta<1$ is some discount factor and 
\begin{align*}
{\bf c}({\bf x}_t,{\bf u}_t):=\frac{1}{N}\sum_{i=1}^Nc(x^i_t,u^i_t,\mu_{{\bf x}_t}).
\end{align*}
The optimal cost is defined by
\begin{align}\label{opt_cost}
J_\beta^{N,*}({\bf x}_0):=\inf_{\gamma\in\Gamma}J^N_\beta({\bf x}_0,\gamma)
\end{align}
where $\Gamma$ denotes the set of all admissible team policies.

We note that this information structure will be our benchmark to evaluate the performance of the approximate solutions that will be presented in the paper. In other words, the value function that is achieved when the agents share full information will be taken to be our reference point for simpler information structures. 
\subsection{Problem Motivation}
The team problem we have introduced can be modeled as a standard Markov decision process (MDP) with a central controller, where the state variable is the collective state of all agent, i.e. ${\bf{x}}\in\mathds{X}^N$. Therefore, in principle,  standard dynamic programming based approaches  such as value iteration, policy iteration can be used to solve the optimal control problem. However, there are some practical challenges for both deriving and implementing the optimal solution.

The first and the main challenge is due to the curse of dimensionality since the state space $\mathds{X}^N$, and the action space $\mathds{U}^N$ become hard to deal with especially when $\mathds{X}$, and $\mathds{U}$ are continuous. 

The second challenge is due to the coordination of the agents. Even if the optimal policy is known, implementation at the agent level is not easy. In general, the agents might have to follow asymmetric policies by coordinating with each other, which becomes increasingly difficult as the number of agents grows.

Lastly, the observation of the feedback variable is a difficult task as each agent needs to have access to the state variable of all other agents, to implement the optimal policy. This requirement is often impractical or infeasible in large systems due to communication and observation constraints.

Our goal in this paper is to propose approximate solution, policy implementation, and feedback collection methods. Furthermore, we will analyze the regret bounds and prove the near optimality  of the presented approximations.




\subsection{Literature Review and Contributions}
The mean-field framework  is used to study multi-agent problems where the population is homogeneous and weakly interacting.  In this setup, the dynamics of each agent depends on the other agents only through the empirical distribution of  the population. Depending on the cost structure of the problem, this framework is  referred to as mean-field game  theory when  agents are competitive and as mean-field control when the agents are cooperative.  The key observation in both settings is that for large populations (as the number of agents approaches to infinity), the decentralized problem can be reduced to a centralized decision problem, that is effectively analyzed through the perspective of a representative agent. 

Mean-field game theory has been introduced independently by \cite{huang2006large} and \cite{lasry2007mean}. The area has gained a lot of attention after the first introduction. We refer the reader to \cite{gomes2014mean,carmona2013probabilistic,bensoussan2013mean,tembine2013risk,huang2007large,anahtarci2022q,elie2020convergence,fu2019actor,guo2019learning, perrin2020fictitious,subramanian2019reinforcement,saldi2018markov, saldi2019approximate} and references in them for mean-field game theory studies both in continuous and discrete time, studying different models and cost structures including computational and learning related methods.

For multi-agent control problems, the solution, in general, is intractable except under specific information structures among the agents. Mean-field type team problems, where the agents are only related through the mean-field term, is one of these special cases. For team control problems, where the agents are cooperative and work together to minimize (or maximize) a common cost (or reward) function, majority of  studies focus on the continuous time dynamics of the limit problem, see \cite{bayraktar2018randomized,djete2022mckean, lauriere2014dynamic,pham2017dynamic,carmona2021convergence,germain2022numerical,bayraktar2021mean,bayraktar2021solvability, sanjari2022optimality, sanjari2021optimal} and references therein for the study of dynamic programming principle, learning methods, and justification of the exchangeability of agents for large (possibly infinite) agent team settings.  The papers  \cite{lacker2017limit,fornasier2019mean,djete2022mckean1} provide the justification for studying the centralized limit problem by rigorously connecting the large population decentralized setting and the infinite population limit problem.

In this paper, we will study multi-agent mean-field control problems in discrete time. For this setting,  \cite{motte2022quantitative,motte2022mean} study mean-field control for infinite population problems under so called open loop controls where the controller can use the realization of the noise process to decide on actions. Furthermore the authors rigorously prove the connection between the limit problem and the finite population problem with a rate of convergence. \cite{gu2021mean,gu2019dynamic} study dynamic programming principle and solutions to the limit (infinite population) problem, and Q-learning methods. Similarly, \cite{carmona2019model} studies different classes of policies that achieve optimal performance for the limit problem and focuses on Q-learning of the problem after establishing the optimality of randomized feedback policies for the agents. \cite{pasztor2021efficient} studies mean-field control problem of infinite populations with additive noise models without common noise, and propose a model based learning algorithm using the class of Lipschitz continuous control policies. \cite{wang2020breaking} studies a model-free fitted Q-learning algorithm for the mean-field control problem with infinitely many agents. The authors focus on the case where the distribution of agents, i.e. the mean-field term is only observed through a sub-sample population, and thus they aim to tackle the curse of many agents for finite state and action spaces. \cite{angiuli2022unified} is another work focusing on model free reinforcement learning of mean-field control and game problems. The authors provide connections between learning for mean-field game and control problems under so called asymptotic and stationary formulations using a two-time scale approach.
Finally, \cite{bauerle2021mean} focuses on mean-field control setting for both finitely many and infinitely many agents problems by constructing a measure valued MDP for both cases. The paper rigorously establishes the connection of the finite population problem to the limit problem. Furthermore, the author studies the implication of these findings to the infinite horizon average cost problems.

In our paper, we aim to provide several approximations for the discrete time mean-field control problems. Our focus is on models where agents operate in continuous state and action spaces with general non-linear dynamics influenced by common noise. We do not put apriori assumptions on the set of optimal policies. We begin by working directly with finite population  setup, by constructing a measure valued MDP for the finite population as shown in \cite{bauerle2021mean}. This approach demonstrates that optimal performance can be achieved if agents have access to the (finite population) empirical distributions of the agents.  However, even after reducing the control problem to its  measure valued  counterpart, if the original state space is continuous, the state space of the measure valued problem will be a continuous state space. Hence, we then further approximate the problem by discretizing the state space and creating a measure valued MDP defined on  a finite set, and we provide provable error bounds resulting from the discretization. 

The solution of the finite population measure valued MDP provides a solution where the agents need to implement asymmetric policies, even though the agents are exchangeable. This asymmetry introduces coordination challenges, making the execution of optimal solutions complex. To break the correlation and the weak coupling between the agents, we also analyze the infinite population control problem. The limit problem can also be represented as a measure valued problem. This formulation can then be viewed as the control of the marginal distribution flow of a representative agent. This approach yields a symmetric policy for all agents which overcomes the coordination difficulty. We analyze the error bounds due to space discretization for the infinite population problem as well.

For the infinite population problem, however, the state space of the measure-valued MDP remains continuous-valued after discretizing the agent space. Hence, we  provide further approximations by $(i)$ further aggregating the state space of the measure-valued MDP into coarser representations and $(ii)$ through a sub-population sampling method. The second method uses the distribution of a smaller population to estimate the mean-field of the original population and thus it also addresses the feedback observation difficulty by reducing the dependence on full-population feedback in large-scale systems.

We now summarize our contributions in detail:

{\bf Contributions: } Our main goal in this paper is to develop approximate solution methods. We will develop the approximation techniques in several steps. As introduced in the problem formulation, the original team problem can be modeled as a centralized MDP with state space $\mathds{X}^N$, where $N$ is the number of agents. 
\begin{itemize}
\item In Section \ref{meas_valued_sec}, we show that the team problem can be modeled as a centralized measure valued MDP, and the state space can be reduced to $\P_N(\mathds{X})$, the set of all probability measures on $\mathds{X}$ that can be represented as the distributions of $N$ agents, without loss of optimality. We show that the optimal policies for agents can be realized as randomized policies for this formulation. This step can be thought of as removing the identities of the agents using the exchangeability of them and only focusing on which parts of the state space agents live, rather than keeping track of states of agents with agent identities. Although, this is a significant dimension reduction, the solution is still hard to compute with $\P_N(\mathds{X})$ being a continuous space. Furthermore, the optimal agent level policies are asymmetric, hence the coordination problem is still present.
\item Towards the dimension reduction purpose, in Section \ref{finite_model_sec}, we show that by discretizing the state space $\mathds{X}$, and separating $\mathds{X}$ into $M$ disjoint subsets, one can construct an MDP whose state space is finite with size $\frac{(M+N-1)!}{(N-1)!M!}$. This step can be thought as only counting the number of agents in the chosen subsets of the state space $\mathds{X}$, rather than looking at every point in the state space. We show that the policies constructed for this approximate model are nearly optimal for large $M$, if the dynamics and the cost functions  satisfy certain regularity properties.
\item  The approximation in Section \ref{finite_model_sec} results in a finite model, however, for large number of agents, the problem can be intractable, and the coordination of the agents remains unsolved. In Section \ref{inf_pop}, we present a control model by taking $N\to\infty$. The resulting models have the state space $\P(\mathds{\hat{X}})$, that is the set of all probability measures on the finite set $\hat{\mathds{X}}$. It has been shown in the literature that this model approximates the team problem when the number of agents tends to be very large, and the solution of the infinite limit can be used by every agent as a symmetric policy, hence, the coordination problem is solved.  However, the dimension of the problem increases from the set of all empirical measures to the set of all  probability measures.
\item Finally, in Section \ref{large_N}, we present approximate solutions for the infinite population problem, by discretizing the set of probability measures using the set of all probability measures on the discretized space that can be represented as the empirical distribution of $n$ agents, where $n$ is manageable. Hence, the approximate solution deals with the curse of dimensionality as well as the coordination complexity. Furthermore, we show that, for the application of the solution, one can keep track of a small portion of the agents as the feedback variable, instead of observing the perfect feedback variable. Thus, we deal with the feedback observation challenge. We also prove the near optimality of the approximations by proving regret upper bounds. 
\end{itemize}

\section{Measure Valued Centralized MDP Construction}\label{meas_valued_sec}

In this section, we will define a Markov decision process, for the distribution of the agents, where the control actions are the joint distribution of the state and action vectors of the agents. 

We let the state space to be  $\mathcal{Z}=\P_N(\mathds{X})$ which is the set of all distributions on $\mathds{X}$ that can be constructed using the joint states of the agents. The state in this form can be thought of as an empirical measure which is constructed by the joint state of the agents. We equip $\mathcal{Z}=\P_N(\mathds{X})\subset \P(\mathds{X})$ with the weak convergence topology.

The admissible set of actions for some state $\mu\in\mathcal{Z}$, is denoted by $U(\mu)$, where
\begin{align}\label{add_act}
U(\mu)=\{\Theta\in \P_N(\mathds{X}\times\mathds{U})|\Theta(\cdot,\mathds{U})=\mu(\cdot)\}.
\end{align}
\adk{We define $\P_N(\mathds{X}\times\mathds{U})$, the set of all joint distributions on $\mathds{X}\times\mathds{U}$ of $N$ agents, in a similar way to $\P(\mathds{X})$ such that
\begin{align*}
\P_N(\mathds{X}\times\mathds{U}):=\{\mu_{\bf(x,u)}(\cdot): {\bf (x,u)}\in\mathds{X}^N\times\mathds{U}^N\}.
\end{align*}
For any joint state-action pair $ {\bf (x,u)}\in\mathds{X}^N\times\mathds{U}^N$, we define the associated empirical measure $\mu_{\bf (x,u)}$ as
\begin{align}\label{sa_emp}
\mu_{\bf (x,u)} (\cdot):= \frac{1}{N}\sum_{i=1}^N\delta_{(x^i,u^i)}(\cdot).
\end{align}}
The above implies that the set of actions for a state $\mu$, is the set of all joint distributions on $\mathds{X}\times\mathds{U}$ of $N$ agents whose marginal on $\mathds{X}$ coincides with $\mu$.  

We equip the state space $\mathcal{Z}$, and the action sets $U(\mu)$, with the first order Wasserstein distance $W_1$.

In order to  define the transition model for this centralized MDP, we note that the distributions of the agents' states of the original team problem induces a controlled Markov chain. In particular, for some set $B\in \B(\mathcal{Z})$, we can write
\begin{align}\label{kern_def}
&Pr(\mu_{t+1}\in B|\mu_t,\dots\mu_0,\Theta_t,\dots,\Theta_0)\nonumber\\
&\qquad=\int_{{\bf x_t,u_t}\in \mathds{X}^N\times\mathds{U}^N}Pr(\mu_{t+1}\in B|{\bf x_t,u_t})Pr({\bf dx_t,du_t}|\mu_t,\dots\mu_0,\Theta_t,\dots,\Theta_0).
\end{align}
For any $\mu_{(\bf x_t,u_t)}=\Theta_t$, and $\mu_{\bf x_t}=\mu_t$, the inside term can be written as
\begin{align*}
Pr(\mu_{t+1}\in B|{\bf x_t,u_t})=\int\mathds{1}_{\{\mu_{f({\bf x_t,u_t,\mu_{x_t},w_t})}\in B\}}Pr(d{\bf w_t})
\end{align*}
that is once we are given the idiosyncratic and the common noise realizations, the states and thus the incoming distribution of the agents is  fully determined.


\adk{If two pairs $({\bf x_t,u_t}),({\bf x'_t,u'_t})$ have the same distribution $\Theta_t$, i.e. if $\mu_{\bf (x_t,u_t)}=\mu_{\bf (x'_t,u'_t)}=\Theta_t$ then $({\bf x_t,u_t})$ can be obtained by permuting the indices of agents in $({\bf x'_t,u'_t})$. This is due to the definition of the empirical measures, and the invariance of the summation in (\ref{sa_emp}) against the permutation of agent indices.  Indeed, the empirical measure $\Theta_t$ depends only on the distribution of the states and actions, not their ordering.} We note further that the dynamics are identical for every agent and that the agents are exchangeable. Consider some ${\bf w_t}$, and ${\bf x_{t+1}}=f({\bf x_t,u_t,\mu_{\bf x_t}w_t})$, where $\mu_{\bf x_{t+1}}=\mu_{t+1}$. By reordering ${\bf w_t}$, one can define a new noise vector  ${\bf w'_t}$ such that ${\bf x'_{t+1}}=f({\bf x'_t,u'_t,\mu_{\bf x_t},w'_t})$. Here, ${\bf x'_{t+1}}$ is just a reordered version of ${\bf x_{t+1}}$, and in particular $\mu_{\bf x_{t+1}}=\mu_{\bf x'_{t+1}}$.

Since the idiosyncratic noises are identically distributed for every agent, as a result of the above discussion, for any two pairs $({\bf x_t,u_t}),({\bf x'_t,u'_t})$ with the same distribution $\Theta_t$, 
\begin{align*}
Pr(\mu_{t+1}\in B|{\bf x_t,u_t})=Pr(\mu_{t+1}\in B|{\bf x'_t,u'_t}), \quad \forall B\in\B(\P_N(\mathds{X}))
\end{align*}
where $\B(\P_N(\mathds{X}))$ is the Borel $\sigma$-algebra on $\P_N(\mathds{X})$. As a result, the integrand in (\ref{kern_def}) remains constant for every ${\bf x}_t,{\bf u}_t$ with $\mu_{\bf x_t}=\mu_t$ and $\mu_{\bf (x_t,u_t)}=\Theta_t$.
Therefore, the distributions of the agent states $\mu_t$, and of the joint state and actions $\Theta_t$ define a controlled Markov chain such that for all $B\in\B(\P_N(\mathds{X}))$
\begin{align}\label{trans_measure}
&Pr(\mu_{t+1}\in B|\mu_t,\dots\mu_0,\Theta_t,\dots,\Theta_0)=Pr(\mu_{t+1}\in B|\mu_t,\Theta_t)\nonumber\\
&:=\eta(B|\mu_t,\Theta_t)\\
&=Pr(\mu_{t+1}\in B|{\bf x_t,u_t}), \text{ for any } ({\bf x_t,u_t}): \mu_{({\bf x_t,u_t})}=\Theta_t\nonumber
\end{align}
where $\eta(\cdot|\mu,\Theta)\in \P(\P_N(\mathds{X}))$ is the transition kernel of the centralized measure valued MDP, which is induced by the dynamics of the team problem.

We define the stage-wise cost function $k(\mu,\Theta)$ by
\begin{align}\label{cost_measure}
k(\mu,\Theta):=\int c(x,u,\mu)\Theta(dx,du)=\frac{1}{N}\sum_{i=1}^Nc(x^i,u^i,\mu).
\end{align}



Thus, we have an MDP with state space $\mathcal{Z}$, action space $\cup_{\mu\in\mathcal{Z}}U(\mu)$, transition kernel $\eta$ and the stage-wise cost function $k$.

We define the set of admissible policies for this measure valued MDP as a sequence of functions $g=\{g_0,g_1,g_2,\dots\}$ such that at every time $t$, $g_t$ is measurable with respect to the $\sigma$-algebra generated by the information variables
\begin{align*}
I_t=\{\mu_0,\dots,\mu_t,\Theta_0,\dots,\Theta_{t-1}\}.
\end{align*}
We denote the set of all admissible control policies by $G$ for the measure valued MDP.

In particular, we define the infinite horizon discounted expected cost function under a policy $g$ by
\begin{align*}
K^N_\beta(\mu_0,g)=E_{\mu_0}^{\eta,g}\left[\sum_{t=0}^\infty \beta^t k(\mu_t,\Theta_t)\right].
\end{align*}

We also define the optimal cost by
\begin{align}\label{opt_cost2}
K_\beta^{N,*}(\mu_0)=\inf_{g\in G}K^N_\beta(\mu_0,g).
\end{align}

\subsection{Equivalence between the Team Problem and the Measure Valued MDP}
In this section, we show that the optimal value function of the original team problem, $J_\beta^{N,*}$ (see (\ref{opt_cost})), and the optimal value function of the measure valued MDP, $K_\beta^{N,*}$ (see (\ref{opt_cost2})) are equal. Furthermore, an optimal policy designed for the measure valued MDP can be realized as a randomized policy for the team problem, which achieves the optimal performance.

Before the main result of this section, we present the set of assumptions that will be used frequently in the paper and  we present a key lemma:

\begin{assumption}\label{main_assmp}
\begin{itemize}
\item[i.)] $\mathds{X}$ and $\mathds{U}$ are compact.
\item[ii.)] $f$ is Lipschitz in $x,u,\mu_{\bf x}$ such that
\begin{align*}
|f(x,u,\mu_{\bf x},w^i,w^0)-f(x',u',\mu_{\bf x'},w^i,w^0)|\leq K_f \left(\|x-x'\|+\|u-u'\|+W_1(\mu_{\bf x},\mu_{\bf x'})\right)
\end{align*} 
for some $K_f<\infty$, uniformly in $w^i, w^0$ where $W_1$ is the first order Wasserstein distance.
\item[iii.)] $c$ is  Lipschitz in $x,u,\mu_{\bf x}$ such that
\begin{align*}
|c(x,u,\mu_{{\bf x}})-c(x',u',\mu_{{\bf x'}})|\leq K_c \left(\|x-x'\|+\|u-u'\|+W_1(\mu_{\bf x},\mu_{\bf x'})\right)
\end{align*}
for some $K_c<\infty$.
\end{itemize}
\end{assumption}

\begin{lemma}\label{val_const}
Under Assumption \ref{main_assmp}, we have that the optimal value function is constant over the initial states with the same distribution, i.e. if ${\bf{x_0,x_0'}}\in \mathds{X}^N$ have the same distribution, $\mu_{\bf x_0}=\mu_{\bf x_0'}$, then
\begin{align*}
J_\beta^{N,*}({\bf x_0})=J_\beta^{N,*}({\bf x_0'}).
\end{align*}
\end{lemma}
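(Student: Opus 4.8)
The plan is to exploit the permutation symmetry (exchangeability) of the agents. The elementary fact driving everything is that two state vectors $\mathbf{x},\mathbf{x}'\in\mathds{X}^N$ satisfy $\mu_{\mathbf{x}}=\mu_{\mathbf{x}'}$ if and only if $\mathbf{x}'$ is a reordering of $\mathbf{x}$, i.e. there is a permutation $\sigma\in S_N$ with $x'^i=x^{\sigma(i)}$ for every $i$. Write $T_\sigma$ for the induced bijection $(T_\sigma\mathbf{x})^i=x^{\sigma(i)}$ on $\mathds{X}^N$, and use the same symbol for the analogous bijections on $\mathds{U}^N$ and on the idiosyncratic-noise space; each $T_\sigma$ is a homeomorphism, hence bi-measurable.

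First I would record three structural properties that follow from the symmetric form of the model together with Assumption \ref{main_assmp}: (i) the empirical measure is permutation-invariant, $\mu_{T_\sigma\mathbf{x}}=\mu_{\mathbf{x}}$; (ii) the stage cost is permutation-invariant, $\mathbf{c}(T_\sigma\mathbf{x},T_\sigma\mathbf{u})=\mathbf{c}(\mathbf{x},\mathbf{u})$, since it is the average of $c(x^i,u^i,\mu_{\mathbf{x}})$ over $i$ and $\mu_{\mathbf{x}}$ is invariant; and (iii) the dynamics are equivariant, $f(T_\sigma\mathbf{x},T_\sigma\mathbf{u},T_\sigma\mathbf{w})=T_\sigma f(\mathbf{x},\mathbf{u},\mathbf{w})$ with the common noise $w^0$ left unpermuted, because $f$ is applied componentwise with the same kernel for every agent and depends on the population only through the invariant $\mu_{\mathbf{x}}$. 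Compactness of $\mathds{X},\mathds{U}$ together with Lipschitz $c$ guarantees $c$ is bounded, so $J_\beta^{N,*}$ is finite and the infimum in (\ref{opt_cost}) is well-defined; this is the only role Assumption \ref{main_assmp} plays here.

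Next, given any admissible team policy $\gamma$ for initial condition $\mathbf{x}$, I would construct a relabeled policy $\gamma^\sigma$ for $\mathbf{x}'=T_\sigma\mathbf{x}$ by conjugation: $\gamma^\sigma$ applies $T_\sigma^{-1}$ to the observed history, evaluates $\gamma$ (carrying any internal randomization through unchanged), and then applies $T_\sigma$ to the resulting action vector. Since $T_\sigma$ is bi-measurable, $\gamma^\sigma$ is again admissible. Coupling the two closed-loop systems through the common noise $w_t^0$ and the permuted idiosyncratic noise $w_t'^i=w_t^{\sigma(i)}$ — which has the same law as $w_t^i$ because the $\{w_t^i\}_i$ are i.i.d. — I would show by induction on $t$ that $\mathbf{x}'_t=T_\sigma\mathbf{x}_t$ and $\mathbf{u}'_t=T_\sigma\mathbf{u}_t$ along the coupled trajectories, using (iii) for the state update and the definition of $\gamma^\sigma$ for the action. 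Invariance (ii) then forces the stage costs to agree path-by-path, so $J^N_\beta(\mathbf{x}',\gamma^\sigma)=J^N_\beta(\mathbf{x},\gamma)$. Taking the infimum over $\gamma$ gives $J_\beta^{N,*}(\mathbf{x}')\le J_\beta^{N,*}(\mathbf{x})$, and running the identical argument with $\sigma^{-1}$ yields the reverse inequality, hence equality.

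The main obstacle is the bookkeeping around the information structure and randomization: I must verify that $\gamma^\sigma$ is measurable with respect to the $\sigma$-algebra generated by the permuted history $I_t$ rather than the original one, and that the noise coupling produces two processes whose laws are exact push-forwards of one another under $T_\sigma$, not merely equal in some marginal sense, so that the discounted expectations coincide. This is precisely the exchangeability mechanism already used to derive the measure-valued kernel $\eta$ in (\ref{trans_measure}); the technical care lies in promoting that one-step statement to the level of full randomized, history-dependent closed-loop trajectories.
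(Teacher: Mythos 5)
Your proposal is correct, but it takes a genuinely different route from the paper. You argue directly at the level of closed-loop trajectories: given any admissible (possibly randomized, history-dependent) team policy $\gamma$, you conjugate it by the permutation $T_\sigma$, couple the two systems through the common noise and the permuted idiosyncratic noise (whose law is unchanged by i.i.d.-ness), and show by induction on $t$ that the two trajectories are pathwise reorderings of one another, so the discounted costs coincide policy-by-policy; equality of the value functions then follows by taking infima, since $\gamma\mapsto\gamma^\sigma$ is a bijection of the admissible class onto itself. The paper instead runs the argument through dynamic programming: it defines the value-iteration sequence $v_k$, proves by induction (via a contradiction argument that reorders the minimizing action vector) that each $v_k$ is constant on states with the same empirical measure, and then passes to the limit using the $\beta$-contraction of the Bellman operator and the fact that its fixed point is $J_\beta^{N,*}$. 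Your approach is more elementary and slightly more general: it never invokes the Bellman fixed-point characterization, measurable selection, or existence of minimizers, and it handles randomized history-dependent policies natively — Assumption \ref{main_assmp} enters only through boundedness of $c$, exactly as you note. What the paper's approach buys is reuse: the same value-iteration induction is the engine of the proof of Theorem \ref{thm_meas} (the equivalence $K_\beta^{N,*}=J_\beta^{N,*}$ and the realization of measure-valued optimal policies at the agent level), so establishing the lemma by that technique sets up machinery the paper needs anyway. The one point you flag as an obstacle — promoting the one-step exchangeability to laws of full closed-loop trajectories — is indeed the only technical content of your route, and your coupling (common noise shared, idiosyncratic noise permuted, internal randomization carried through unchanged) resolves it correctly: the primed trajectory's law is the $T_\sigma$-pushforward of the unprimed one, so expectations of the pathwise-equal costs agree.
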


\begin{proof}
The proof can be found in Apendix \ref{val_const_sec}.
\end{proof}

\begin{theorem}\label{thm_meas}
Under Assumption \ref{main_assmp}, for any ${\bf x}_0$ that satisfies $\mu_{{\bf x}_0}=\mu_0$, that is for any ${\bf x}_0$ with distribution $\mu_0$, we have that
\begin{itemize}
\item[i.)]
\begin{align*}
K_\beta^{N,*}(\mu_{0})=J_\beta^{N,*}({\bf x}_0).
\end{align*}

\item[ii.)] There exists a stationary and Markov optimal policy $g^*$ for the measure valued MDP, and  using $g^*$, every agent can construct a policy $\gamma^i:\mathds{X}\times \P_N(\mathds{X})\to\mathds{U}$ such that for $\gamma:=\{\gamma^1,\gamma^2,\dots,\gamma^N\}$
\begin{align*}
J^N_\beta({\bf x_0},\gamma)=J_\beta^{N,*}({\bf x_0}).
\end{align*}
\end{itemize}
\end{theorem}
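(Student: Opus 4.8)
The plan is to prove both parts simultaneously through the dynamic programming (Bellman) equations of the two MDPs, using Lemma \ref{val_const} as the bridge. First I would record that, under Assumption \ref{main_assmp}, both control problems are standard discounted MDPs on compact metric spaces with a bounded cost: $\mathds{X}^N$ and $\mathds{U}^N$ are compact, $\mathbf{c}$ is bounded and continuous, and $f$ induces a weakly continuous transition kernel; likewise $\mathcal{Z}=\P_N(\mathds{X})$ is $W_1$-compact (it is the continuous image of $\mathds{X}^N$ under the empirical map) and $k$ is bounded. Since $0<\beta<1$, the associated Bellman operators are contractions on the space of bounded measurable functions, so $J_\beta^{N,*}$ and $K_\beta^{N,*}$ are the unique bounded solutions of their respective fixed-point equations.

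The key algebraic step is to show these two fixed-point equations coincide once $J_\beta^{N,*}$ is viewed as a function of the empirical measure. By Lemma \ref{val_const} we may write $J_\beta^{N,*}(\mathbf{x})=V(\mu_{\mathbf{x}})$ for a well-defined $V$ on $\mathcal{Z}$. Plugging this into the team Bellman equation
\begin{align*}
J_\beta^{N,*}(\mathbf{x})=\inf_{\mathbf{u}\in\mathds{U}^N}\left\{\mathbf{c}(\mathbf{x},\mathbf{u})+\beta\, E\!\left[J_\beta^{N,*}(f(\mathbf{x},\mathbf{u},\mathbf{w}))\right]\right\},
\end{align*}
and using the identities $\mathbf{c}(\mathbf{x},\mathbf{u})=k(\mu_{\mathbf{x}},\mu_{(\mathbf{x},\mathbf{u})})$ together with the defining property $E[V(\mu_{f(\mathbf{x},\mathbf{u},\mathbf{w})})]=\int V(\mu')\,\eta(d\mu'\,|\,\mu_{\mathbf{x}},\mu_{(\mathbf{x},\mathbf{u})})$ coming from \eqref{trans_measure}, the right-hand side depends on $\mathbf{u}$ only through $\Theta:=\mu_{(\mathbf{x},\mathbf{u})}$. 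I would then verify the crucial surjectivity fact that, for fixed $\mathbf{x}$ with $\mu_{\mathbf{x}}=\mu$, the map $\mathbf{u}\mapsto\mu_{(\mathbf{x},\mathbf{u})}$ ranges exactly over $U(\mu)$: any $\Theta\in U(\mu)$ is an $N$-atom measure whose $\mathds{X}$-marginal equals $\mu$, so its atoms can be matched to the coordinates of $\mathbf{x}$ and the corresponding actions assigned. Hence the infimum over $\mathds{U}^N$ becomes an infimum over $\Theta\in U(\mu)$, and $V$ solves exactly the measure-valued Bellman equation. Uniqueness of the bounded fixed point then forces $V=K_\beta^{N,*}$, which is part (i).

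For part (ii), existence of a stationary Markov optimal policy $g^*$ for the measure-valued MDP follows from the standard measurable-selection theorem for discounted MDPs, for which I would check the hypotheses supplied by Assumption \ref{main_assmp}: $U(\mu)$ is compact-valued with $\mu\mapsto U(\mu)$ a suitably continuous correspondence, $k$ is continuous, and $\eta$ is weakly continuous in $(\mu,\Theta)$. Given $g^*$, I would construct the agent policies by disintegrating $\Theta^*=g^*(\mu)$ with respect to its $\mathds{X}$-marginal,
\begin{align*}
\Theta^*(dx,du)=\mu(dx)\,K^*(du\,|\,x),
\end{align*}
and letting agent $i$ at state $x^i$ act through $K^*(\cdot\,|\,x^i)$. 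Using the matching fact above, the agents realize $\mu_{(\mathbf{x}_t,\mathbf{u}_t)}=\Theta^*$ exactly; by exchangeability this assignment is a symmetric randomized policy $\gamma^i:\mathds{X}\times\P_N(\mathds{X})\to\mathds{U}$. Consequently the induced distribution flow $(\mu_t,\Theta_t)$ is exactly an optimal trajectory of the measure-valued MDP, so $J_\beta^N(\mathbf{x}_0,\gamma)=K_\beta^{N,*}(\mu_0)=J_\beta^{N,*}(\mathbf{x}_0)$.

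The main obstacle I anticipate is the regularity needed to treat the induced kernel $\eta$ rigorously: since $\eta(\cdot\,|\,\mu,\Theta)$ is defined only implicitly, through the law of $\mu_{f(\mathbf{x},\mathbf{u},\mathbf{w})}$ for an arbitrary representative $(\mathbf{x},\mathbf{u})$ of $\Theta$, establishing its weak continuity in $(\mu,\Theta)$ (and that it is independent of the representative, which was argued via exchangeability in the construction) is the delicate point underpinning both the contraction/uniqueness argument and the measurable selection. A secondary subtlety is that the agent-level realization of $\Theta^*$ must use a coordinated, exchangeable randomization among agents sharing a common state: naive independent sampling from $K^*(\cdot\,|\,x)$ reproduces the correct expected stage cost but not the exact empirical action profile, hence not the exact transition law, so care is required to keep the realized joint measure equal to $\Theta^*$ almost surely.
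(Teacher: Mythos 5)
Your proposal is correct and takes essentially the same approach as the paper: both arguments rest on Lemma \ref{val_const} together with the exact correspondence between action vectors ${\bf u}$ and admissible joint empirical measures $\Theta=\mu_{({\bf x},{\bf u})}$ (so that the infimum over $\mathds{U}^N$ becomes an infimum over $U(\mu)$), both obtain the stationary Markov optimal $g^*$ via measurable selection, and both realize $g^*$ at the agent level by disintegrating $\Theta^*$ and coordinating the agents to reproduce it exactly. The only difference is organizational — the paper runs an explicit value-iteration induction showing $v_k=w_k$ for all $k$, while you plug $V(\mu_{\bf x})=J_\beta^{N,*}({\bf x})$ into the Bellman equation and invoke uniqueness of the bounded fixed point, which is an equivalent mechanism given the contraction property; your closing caveat about needing coordinated rather than independent randomization among agents sharing a state is precisely the point the paper handles implicitly when it asserts the agents can realize $\Theta^*$.
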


\begin{proof}
The proof can be found in Appendix \ref{thm_meas_sec}. 
\end{proof}

\begin{remark}
The second part of the previous result is significant for reducing the complexity of the problem. An implication of the result is that, our search space for optimality reduces from $\mathds{X}^N$ to $\P_N(\mathds{X})$ (set of all distributions on $\mathds{X}$ with length $N$).
\end{remark}

\section{Finite Model Construction}\label{finite_model_sec}

\subsection{Action Space Discretization} We start by reducing the action space $\mathds{U}$ of agents to a finite set. 
We let  $\hat{\mathds{U}} = \{u_{1},\ldots,u_{k}\} \subset \mathds{U}$ be a subset of the original action space and we define:

\begin{align}\label{quant_error_action}
L_{\mathds{U}}:=&\sup_{u\in\mathds{U}}\min_{\hat{u}\in\hat{\mathds{U}}}\left\|u-\hat{u}\right\|
\end{align}
where we use the Euclidean norm for $\mathds{U}\subset \mathds{R}^m$.
This represents the worst error bound that results from the discretization of the action space. 

One can define the team problem introduced in Section \ref{intro} in an identical way where the action space of the agents is replaced with $\hat{\mathds{U}}$. We denote the optimal value function of the team problem with finite action spaces by $\tilde{J}_\beta^{N,*}({\bf x}_0)$.

 Recall that the team problem in its original formulation can be seen as a centralized MDP when every agent has access to the state and action information of every other agent. Hence the following can be seen as an implication of \cite[Theorem 3.16]{SaLiYuSpringer}. However, we present a proof Appendix \ref{act_fin_proof} for completeness.
\begin{theorem}\label{act_fin}
Under Assumption \ref{main_assmp}, if $2K_f\beta<1$, we have
\begin{align*}
|\tilde{J}_\beta^{N,*}({\bf x}_0)-{J}_\beta^{N,*}({\bf x}_0)|\leq  \frac{K_c}{(1-2K_f\beta)(1-\beta)} L_{\mathds{U}}
\end{align*}
where $K_f$ and $K_c$ represent the Lipschitz coefficient of the dynamics function $f$ and the cost function $c$ respectively (see Assumption \ref{main_assmp}).
\end{theorem}
\adk{
\begin{assumption}
Since we can control the upper-bound in the last result under the assumption that $\mathds{U}$ is compact,  in the sequel, we assume that $\mathds{U}$ is finite. 
\end{assumption}}
\subsection{State Space Discretization}\label{finite_sec}
For the finite state space approximation, we start by choosing a collection of disjoint sets $\{B_i\}_{i=1}^M$ such that $\cup_i B_i=\mathds{X}$, and $B_i\cap B_j =\emptyset$ for any $i\neq j$. Furthermore, we choose a representative state, $\hat{x}_i\in B_i$, for each disjoint set. For this setting, we denote the new finite state space by
$\hat{\mathds{X}}:=\{\hat{x}_1,\dots,\hat{x}_M\}$. The mapping from the original state space $\mathds{X}$ to the finite set $\hat{\mathds{X}}$ is done via
\begin{align}\label{quant_map}
\phi(x)=\hat{x}_i \quad \text{ if } x\in B_i.
\end{align}
With an abuse of notation, we extend this definition to the state vector of the agents, such that $\phi({\bf x})$ denotes the mapping when $\phi$ is applied to every element of ${\bf x}=(x^1,\dots,x^N)$ separately.
Furthermore, we choose a weight measure 
\begin{align}\label{weight_meas}
\pi(\cdot)\in\P(\mathds{X}^N).
\end{align}
such that $\pi(B_{i_1}\times B_{i_2}\times\dots\times B_{i_N})>0$ for any $B_{i_1},B_{i_2},\dots,B_{i_N}$.

We also define the uniform quantization error that results from the discretization of the state space
\begin{align}\label{quant_error}
L_{\mathds{X}}:=&\max_i\sup_{x,x'\in B_i}\left\|x-x'\right\|
\end{align}
where we use the Euclidean norm for $\mathds{X}\subset\mathds{R}^l$.
\subsection{Measure Valued Finite MDP Construction}\label{measure_valued_MDP}
In this section, building on the discretization in Section \ref{finite_sec}, we will construct a measure valued finite MDP where the state space is $\P_N(\hat{\mathds{X}})$, and the action space is $\P_N(\hat{\mathds{X}}\times\mathds{U})$. In other words, the central controller observes the distribution of the agents on the finite space $\hat{\mathds{X}}$, say $\mu_{\hat{\bf x}}$, and selects a distribution on $\hat{\mathds{X}}\times\mathds{U}$ as an action with the restriction that the marginal on $\hat{\mathds{X}}$ is $\mu_{\hat{\bf x}}$.

Note that with a simple combinatorial analysis, it can be shown that the size of the state space $\P_N(\hat{\mathds{X}})$ is $\frac{(M+N-1)!}{(N-1)!M!}$ where $N$ is the number of agents, and $M$ is the number of quantization bins. Similarly, for a given state $\mu_{\hat{\bf x}}$, the set of all admissible actions has the size $M\frac{(|\mathds{U}|+N-1)!}{(N-1)!|\mathds{U}|!}$. Hence, the new state and action spaces are finite.

We enumerate the states of the finite state space $\P_N(\hat{\mathds{X}})$, and denote by $\hat{\mu}_j$ the jth state. We now show that the discretization in the space $\mathds{X}$ induces a discretization in $\P_N(\mathds{X})$, which creates $\P_N(\hat{\mathds{X}})$. We define the sets
\begin{align*}
A_j=\{\mu_{\bf x}: \mu_{\phi({\bf x})}=\hat{\mu}_j\},
\end{align*} 
that is, $A_j$ is the set of measures on $\mathds{X}$ for the vector states ${\bf x}$, which give the finite measure $\mu_j$ when they are discretized. 

Note that, these sets define a map from $\P_N(\mathds{X})$ to $\P_N(\hat{\mathds{X}})$, such that each $\mu\in\P_N(\mathds{X})$ is mapped to some $\hat{\mu}_j\in\P_N(\mathds{\hat{X}})$. We will denote this map also by  $\phi$ with an abuse of notation to avoid notational clutter. 

The next supporting result shows that the quantization error, $L_{\mathds{X}}$, for the space $\mathds{X}$, projects to the quantization error for the space $\P_N(\mathds{X})$, when it is metrized by the first order Wasserstein metric.
\begin{lemma}\label{quant_lem}
\begin{align*}
\sup_j\sup_{\mu,\nu\in A_j}W_1(\mu,\nu)\leq L_{\mathds{X}}.
\end{align*}
where $L_\mathds{X}$ is as defined in (\ref{quant_error}).
\end{lemma}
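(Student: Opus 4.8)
The plan is to exploit the fact that membership of two empirical measures in the same cell $A_j$ forces them to place the \emph{same number} of agents in each quantization bin $B_i$, and then to build an explicit transport plan that moves mass only within bins, so that every unit of mass travels a distance of at most the bin diameter.

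First I would unpack the definition of $A_j$. If $\mu=\mu_{\bf x}$ and $\nu=\mu_{\bf y}$ both lie in $A_j$, then $\mu_{\phi({\bf x})}=\mu_{\phi({\bf y})}=\hat{\mu}_j$. Since $\phi$ collapses every point of $B_i$ to the single representative $\hat{x}_i$, the equality $\mu_{\phi({\bf x})}=\mu_{\phi({\bf y})}$ is equivalent to saying that, for each $i\in\{1,\dots,M\}$, the number of coordinates of ${\bf x}$ lying in $B_i$ equals the number of coordinates of ${\bf y}$ lying in $B_i$; call this common count $n_i$, so that $\sum_{i=1}^M n_i=N$.

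Next I would construct a coupling. Within each bin $B_i$ both ${\bf x}$ and ${\bf y}$ have exactly $n_i$ agents, so I would pair these two equal-sized groups arbitrarily. Doing this for every $i$ yields a permutation $\sigma$ of $\{1,\dots,N\}$ with the property that $x^k$ and $y^{\sigma(k)}$ always lie in a common bin. The associated measure $\frac{1}{N}\sum_{k=1}^N\delta_{(x^k,y^{\sigma(k)})}$ is an admissible coupling of the two equally weighted $N$-point empirical measures $\mu_{\bf x}$ and $\mu_{\bf y}$, and for each matched pair we have $|x^k-y^{\sigma(k)}|\leq\diam(B_i)\leq L_{\mathds{X}}$.

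Finally, since $W_1$ is the infimum of the transport cost over all couplings, I would bound it by the cost of this particular plan,
\[
W_1(\mu,\nu)\leq\frac{1}{N}\sum_{k=1}^N|x^k-y^{\sigma(k)}|\leq L_{\mathds{X}},
\]
and because this estimate is uniform in $j$ and in the choice of $\mu,\nu\in A_j$, taking suprema yields the claim. I do not anticipate a genuine obstacle; the only steps requiring care are the elementary observation that equal discretized empirical measures force equal per-bin counts (which is what guarantees a within-bin matching exists), and the standard fact that a permutation-induced plan is a valid coupling of two empirical measures of equal size.
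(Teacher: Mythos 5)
Your proposal is correct, and its combinatorial heart is identical to the paper's proof: both arguments hinge on the observation that $\mu_{\phi({\bf x})}=\mu_{\phi({\bf y})}$ forces the two state vectors to have the same number of coordinates in every bin $B_i$, so that the agents can be matched within bins. The only difference is how the Wasserstein distance is then bounded. The paper works with the dual (Kantorovich--Rubinstein) formulation, writing $W_1(\mu_{\bf x_1},\mu_{\bf x_2})=\sup_{\Lip(h)\leq 1}\left|\int h\, d\mu_{\bf x_1}-\int h \,d\mu_{\bf x_2}\right|$ and bounding $|h(x^i_1)-h(x^i_2)|\leq L_{\mathds{X}}$ after the within-bin reordering, whereas you use the primal formulation, exhibiting the permutation-induced plan $\frac{1}{N}\sum_{k}\delta_{(x^k,y^{\sigma(k)})}$ as an admissible coupling and bounding its transport cost. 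Your route is marginally more elementary, since for empirical measures the coupling bound is immediate and needs no duality theorem, while the paper's dual argument is equally short once the reordering is in place; the two finishes are interchangeable and neither has a gap.
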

\begin{proof}
Let ${\bf x_1,x_2}$ be such that $\mu_{\phi({\bf x_1})}=\mu_{\phi({\bf x_2})}$. Note that since the discretized versions of ${\bf x_1}$ and ${\bf x_2}$ have the same distributions, then ${\bf x_1}$ and ${\bf x_2}$ have the exact same number of elements in every quantization bin of $\mathds{X}$. By reordering, and collecting the elements from the same quantization bin together:
\begin{align*}
W_1(\mu_{\bf x_1},\mu_{\bf x_2})=\sup_{Lip(h)\leq 1}\left|\int h d\mu_{\bf x_1}-\int h d\mu_{\bf x_2}\right|&\leq\sup_{Lip(h)\leq 1}\frac{1}{N}\sum_{i=1}^N|h(x^i_1)-h(x^i_2)|\\
&\leq L_\mathds{X}
\end{align*}
where the last step follows from the fact that $h$'s Lipschitz coefficient is smaller than 1, and $x^i_1, x^i_2$ belong to the same quantization bin of $\mathds{X}$ due to reordering. Furthermore, the quantization error on $\mathds{X}$ is upper bounded by $L_{\mathds{X}}$ by definition (see (\ref{quant_error})).
\end{proof}

We now construct the normalized weight measures for the subsets $A_j\subset \P_N({\mathds{X}})$, using the previously chosen measures $\pi$ (see (\ref{weight_meas})), such that given the finite measure is $\mu_j$, for $A \subset A_j$
\begin{align}\label{norm_meas}
Pr(\mu \in A | \mu\in A_j)=\frac{Pr(\{{\bf x}:\mu_{\bf x}\in A\})}{Pr(\{{\bf x}:\mu_{\phi(\bf x)}\})}=\frac{\pi(\{{\bf x}:\mu_{\bf x}\in A\})}{\pi(\{{\bf x}\mu_{\phi(\bf x)}=\mu_j\})}=:\hat{\pi}_j(A).
\end{align}
Using these normalized weight measures, we define the cost function and the transition model for the finite measure valued MDP.

We denote the cost function by $\hat{k}:\P_N(\hat{\mathds{X}})\times \P_N(\hat{\mathds{X}}\times\mathds{U})\to \mathds{R}$ such that for some $\hat{\mu}_j\in\P(\hat{\mathds{X}})$ and $\hat{\Theta}\in  \P_N(\hat{\mathds{X}}\times\mathds{U})$
\begin{align}\label{finite_cost}
\hat{k}(\hat{\mu}_j,\hat{\Theta}):=\int_{A_j} k(\mu,\Theta_\mu)\hat{\pi}_j(d\mu)
\end{align}
where $\hat{\pi}_j$ is the normalized weight measure for the set $A_j\subset \P_N({\mathds{X}})$ of $\hat{\mu}_j$, and $k$ is the cost function of the original measure valued MDP as defined in (\ref{cost_measure}). Furthermore, for $\hat{\Theta}(dx,du)=\gamma(du|x)\hat{\mu}_j(dx)$, we have $\Theta_\mu(dx,du)=\gamma(du|\phi(x))\mu(dx)$. 

We denote the transition law of the finite model by $\hat{\eta}$ such that 
\begin{align}\label{finite_kernel}
\hat{\eta}(\hat{\mu}_i|\hat{\mu}_j,\hat{\Theta})=\int_{A_j}\eta(A_i|\mu,\Theta_\mu)\hat{\pi}_j(d\mu)
\end{align}
where $\eta$ is the transition kernel of the original measure valued MDP as defined in (\ref{trans_measure}).

In other words, to define the cost and the transitions of the finite model, we average over the cost and the transitions of the original model using the normalized weight measures.


For some $\hat{\mu}_j\in \P_N(\hat{\mathds{X}})$ and an admissible policy $\hat{g}$, the infinite horizon discounted cost function is defined as
\begin{align*}
\hat{K}_\beta^N(\hat{\mu}_j,\hat{g})=\sum_{t=0}^\infty \beta^t E[\hat{k}(\hat{\mu}_t,\hat{g}(\hat{\mu}_t))]
\end{align*}
where the expectation is defined with respect to the kernel $\hat{\eta}$ and the initial point $\hat{\mu}_j$.
We will denote the optimal value function of this finite measure valued MDP by
\begin{align*}
\hat{K}_\beta^{N,*}(\hat{\mu}_j):=\inf_{\hat{g}\in \hat{G}}\hat{K}_\beta^{N}(\hat{\mu}_j,\hat{g})
\end{align*}
for every $j$, where $\hat{G}$ is the set of all admissible policies for the finite measure valued MDP.

Note that we can extend the optimal value function $\hat{K}_\beta^{N,*}(\hat{\mu}_j)$ which is defined on $\P_N(\hat{\mathds{X}})$, over the set $\P_N(\mathds{X})$, by making it constant over the subsets $A_j$. Hence, we can compare the value functions of the original measure valued MDP problem, and the finite measure valued MDP problem. For the main result of this section, we need the following lemmas:


\begin{lemma}\label{cost_kernel_bound}
Let $\mu,\mu'\in\P_N(\mathds{X})$ and  ${\bf u}\in\mathds{U}^N$. 
Under Assumption \ref{main_assmp}, there exist state vectors ${\bf x,x'}$  such  that $\mu_{\bf x}=\mu$, $\mu_{\bf x'}=\mu'$ and $\Theta=\mu_{\bf (x,u)},\Theta'=\mu_{\bf (x',u)}\in\P_N(\mathds{X}\times\mathds{U})$ and we have
\begin{align*}
|k(\mu,\Theta)-k(\mu',\Theta')|\leq 2K_c W_1(\mu,\mu')\\
W_1(\eta(\cdot|\mu,\Theta),\eta(\cdot|\mu',\Theta'))\leq  2K_f W_1(\mu,\mu').
\end{align*}
\end{lemma}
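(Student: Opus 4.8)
The plan is to build both witness state vectors from a single optimal matching between the empirical measures $\mu$ and $\mu'$, and then exploit the Lipschitz assumptions together with two layers of coupling for the kernel estimate. First I would recall that since $\mu,\mu'\in\P_N(\mathds{X})$ are empirical measures of exactly $N$ points, an optimal transport plan between them is attained at a permutation: writing $\mu=\frac1N\sum_i\delta_{a_i}$ and $\mu'=\frac1N\sum_i\delta_{b_i}$, the couplings form the set of $\frac1N$-scaled doubly stochastic matrices, whose extreme points are permutation matrices (Birkhoff), and since $W_1$ is linear in the plan the minimum is attained at some permutation $\sigma$. Relabelling the points of $\mu'$ by $\sigma$, I obtain ordered vectors ${\bf x}=(x^1,\dots,x^N)$ and ${\bf x'}=({x'}^1,\dots,{x'}^N)$ with $\mu_{\bf x}=\mu$, $\mu_{\bf x'}=\mu'$ and
\[
W_1(\mu,\mu')=\frac1N\sum_{i=1}^N|x^i-{x'}^i|.
\]
I then assign the \emph{same} action coordinate $u^i$ to the $i$-th agent in both vectors, so that $\Theta=\mu_{({\bf x},{\bf u})}=\frac1N\sum_i\delta_{(x^i,u^i)}$ and $\Theta'=\mu_{({\bf x'},{\bf u})}=\frac1N\sum_i\delta_{({x'}^i,u^i)}$ are the claimed actions.

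For the cost bound, using $(\ref{cost_measure})$ I write $k(\mu,\Theta)-k(\mu',\Theta')=\frac1N\sum_i\big(c(x^i,u^i,\mu)-c({x'}^i,u^i,\mu')\big)$ and apply the triangle inequality. Assumption $\ref{main_assmp}$(iii), with the matched action coordinates cancelling ($|u^i-u^i|=0$), gives $|c(x^i,u^i,\mu)-c({x'}^i,u^i,\mu')|\le K_c(|x^i-{x'}^i|+W_1(\mu,\mu'))$ for each $i$. Averaging over $i$ and invoking the matching identity above turns $\frac1N\sum_i|x^i-{x'}^i|$ into $W_1(\mu,\mu')$, which yields the stated bound $2K_cW_1(\mu,\mu')$.

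For the kernel bound I would use synchronous coupling of the noise at two levels. Recall from $(\ref{trans_measure})$ that $\eta(\cdot|\mu,\Theta)$ is the law of the random empirical measure $\frac1N\sum_i\delta_{f(x^i,u^i,\mu,w^i,w^0)}$, where $(w^0,w^1,\dots,w^N)$ carries the idiosyncratic/common-noise law; likewise for $\eta(\cdot|\mu',\Theta')$ with $x^i$ replaced by ${x'}^i$ and $\mu$ by $\mu'$. Feeding the \emph{same} noise realization $(w^0,w^i)$ into both systems defines a coupling of these two laws on $\P_N(\mathds{X})$, so the outer $W_1$ (on $\P(\P_N(\mathds{X}))$ metrized by the inner $W_1$) is bounded by the expected inner $W_1$ between the two random measures. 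For each fixed noise realization, the index-$i$-to-index-$i$ pairing is a (generally suboptimal, hence upper-bounding) coupling of the two empirical measures, giving
\[
W_1\!\Big(\tfrac1N\textstyle\sum_i\delta_{f(x^i,u^i,\mu,w^i,w^0)},\,\tfrac1N\textstyle\sum_i\delta_{f({x'}^i,u^i,\mu',w^i,w^0)}\Big)\le\frac1N\sum_i\big|f(x^i,u^i,\mu,w^i,w^0)-f({x'}^i,u^i,\mu',w^i,w^0)\big|.
\]
Assumption $\ref{main_assmp}$(ii), which is uniform in $(w^i,w^0)$, bounds the $i$-th summand by $K_f(|x^i-{x'}^i|+W_1(\mu,\mu'))$; averaging and using the matching identity gives the deterministic bound $2K_fW_1(\mu,\mu')$, which survives taking expectation over the noise.

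The main obstacle I anticipate is the careful bookkeeping of this ``Wasserstein-over-Wasserstein'' step: one must verify that the shared-noise construction genuinely realizes a coupling of the two transition laws, and that replacing the inner optimal coupling by the diagonal index pairing is legitimate. Since both are upper bounds they compose in the correct direction, so no optimality of the inner coupling is lost; the remainder is a direct application of the Lipschitz hypotheses and of the fact that the permutation matching turns the averaged pointwise distances into exactly $W_1(\mu,\mu')$.
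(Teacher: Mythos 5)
Your proposal is correct and takes essentially the same approach as the paper: an optimal permutation matching of the two empirical measures, identical action coordinates assigned to matched indices, the Lipschitz assumption on $c$ for the cost bound, and synchronous noise coupling with the diagonal index pairing for the kernel bound. The only cosmetic difference is that you phrase the Wasserstein-over-Wasserstein step via primal couplings (which you correctly note compose as upper bounds), while the paper runs the same estimate through the Kantorovich--Rubinstein dual formulation with suprema over $1$-Lipschitz test functions; the two are interchangeable here.
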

\begin{proof}
 The proof can be found in  Appendix \ref{cost_kernel_bound_proof}.
\end{proof}

\begin{lemma}\label{val_lip}
Under Assumption \ref{main_assmp}, if $2K_f\beta<1$ for any $\mu,\mu'\in\P_N({\mathds{X}})$
\begin{align*}
|K_\beta^{N,*}(\mu)-K_\beta^{N,*}(\mu')|\leq \frac{2K_c}{1-2K_f\beta}W_1(\mu,\mu').
\end{align*}
\end{lemma}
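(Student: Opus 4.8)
The plan is to exploit the fixed-point characterization of the optimal value function and to show that the Lipschitz modulus is propagated, with a geometric contraction, through the Bellman operator. Since $\mathds{X}$ and $\mathds{U}$ are compact and $c$ is Lipschitz, $c$ is bounded, so the one-stage cost $k$ is bounded and $K_\beta^{N,*}$ is the unique fixed point, in the space of bounded measurable functions $v\colon \P_N(\mathds{X})\to\mathds{R}$, of the Bellman operator
\begin{align*}
(Tv)(\mu)=\inf_{\Theta\in U(\mu)}\left[k(\mu,\Theta)+\beta\int v(\nu)\,\eta(d\nu|\mu,\Theta)\right],
\end{align*}
which is a $\beta$-contraction in the supremum norm (standard discounted-cost MDP theory, applicable since $\P_N(\mathds{X})$ and the action sets $U(\mu)$ are compact). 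I would first establish the key estimate: if $v$ is Lipschitz with respect to $W_1$ with constant $\Lip(v)$, then $\Lip(Tv)\le 2K_c+2K_f\beta\,\Lip(v)$.

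To prove this estimate, fix $\mu,\mu'\in\P_N(\mathds{X})$ and $\eps>0$, and pick $\Theta'\in U(\mu')$ that is $\eps$-optimal for $(Tv)(\mu')$. Writing $\Theta'$ in vector form as $\Theta'=\mu_{(\mathbf{x}',\mathbf{u})}$, I would invoke Lemma \ref{cost_kernel_bound} with this action vector $\mathbf{u}$ to obtain synchronously coupled realizations $\mathbf{x},\mathbf{x}'$ of $\mu,\mu'$ whose induced action $\Theta=\mu_{(\mathbf{x},\mathbf{u})}$ is admissible for $\mu$ and satisfies
\begin{align*}
|k(\mu,\Theta)-k(\mu',\Theta')|\le 2K_c W_1(\mu,\mu'),\qquad W_1\big(\eta(\cdot|\mu,\Theta),\eta(\cdot|\mu',\Theta')\big)\le 2K_f W_1(\mu,\mu').
\end{align*}
Using $\Theta$ as a (suboptimal) action at $\mu$, the Kantorovich--Rubinstein duality gives $\left|\int v\,d\eta(\cdot|\mu,\Theta)-\int v\,d\eta(\cdot|\mu',\Theta')\right|\le \Lip(v)\,W_1\big(\eta(\cdot|\mu,\Theta),\eta(\cdot|\mu',\Theta')\big)$, so combining the two displayed bounds yields $(Tv)(\mu)\le (Tv)(\mu')+\eps+\big(2K_c+2K_f\beta\,\Lip(v)\big)W_1(\mu,\mu')$. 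Letting $\eps\downarrow0$ and interchanging the roles of $\mu$ and $\mu'$ gives the claimed modulus.

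Finally, I would propagate the estimate along value iteration. Starting from $v_0\equiv0$ with $\Lip(v_0)=0$ and setting $v_{n+1}=Tv_n$, the Lipschitz constants obey $L_{n+1}\le 2K_c+2K_f\beta L_n$; since $2K_f\beta<1$ this recursion increases to the fixed point $L^\ast=\frac{2K_c}{1-2K_f\beta}$ and satisfies $L_n\le L^\ast$ for all $n$. Because $T$ is a contraction, $v_n\to K_\beta^{N,*}$ pointwise, and pointwise limits preserve a uniform Lipschitz bound, so $|K_\beta^{N,*}(\mu)-K_\beta^{N,*}(\mu')|\le L^\ast W_1(\mu,\mu')$, which is the assertion.

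I expect the main obstacle to be the careful application of Lemma \ref{cost_kernel_bound} to match actions: one must verify that the near-optimal joint action $\Theta'$ at $\mu'$ can be kept fixed while reindexing the state and action components along the optimal $W_1$-coupling between $\mu'$ and $\mu$ (simultaneous reindexing leaves the empirical joint law invariant), so that the resulting $\Theta$ is genuinely admissible for $\mu$ and the lemma's synchronous bounds apply to exactly this pair. The remaining analytic ingredients---boundedness of $k$, the contraction property of $T$, and the Kantorovich duality on $\P(\P_N(\mathds{X}))$ with ground metric $W_1$---are routine once this coupling is set up correctly.
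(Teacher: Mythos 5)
Your proof is correct, and it rests on the same key ingredient as the paper's own argument: Lemma \ref{cost_kernel_bound}, applied after rewriting a (near-)optimal joint action at $\mu'$ over the optimally coupled ordering of the two empirical measures, together with the dual (Kantorovich--Rubinstein) form of $W_1$ on $\P(\P_N(\mathds{X}))$. The difference is in the packaging, and it is not cosmetic. The paper argues directly on the fixed point: it writes the Bellman equations at $\mu$ and $\mu'$ with their exact optimal actions $\Theta,\Theta'$ (whose existence it asserts via measurable selection), substitutes the suboptimal action $\mu_{({\bf x},{\bf u'})}$ at $\mu$, obtains $K_\beta^{N,*}(\mu)-K_\beta^{N,*}(\mu')\leq \left(2K_c+2K_f\beta\,\|K_\beta^{N,*}\|_{Lip}\right)W_1(\mu,\mu')$, and then rearranges to solve for $\|K_\beta^{N,*}\|_{Lip}$. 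That rearrangement is legitimate only if one already knows $\|K_\beta^{N,*}\|_{Lip}<\infty$, which the paper's displayed proof does not itself establish; this gap is covered only by the side remark citing Theorem 4.1(c) of \cite{hinderer2005lipschitz}, whose proof is precisely of value-iteration type. Your route --- proving the modulus propagation $\Lip(Tv)\leq 2K_c+2K_f\beta\,\Lip(v)$, iterating from $v_0\equiv 0$ so that $L_n\leq L^*=\frac{2K_c}{1-2K_f\beta}$ for all $n$, and passing the uniform Lipschitz bound through the limit $v_n\to K_\beta^{N,*}$ --- removes this circularity entirely, at the cost of invoking the standard contraction and value-iteration machinery (which the paper uses elsewhere anyway, e.g.\ in the proofs of Lemma \ref{val_const} and Theorem \ref{thm_meas}). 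Your use of $\eps$-optimal actions instead of exact minimizers is a further small robustness gain, since admissibility of the reindexed action $\Theta=\mu_{({\bf x},{\bf u})}$ is all that is needed, and you correctly identified that the simultaneous reindexing of state and action components along the optimal coupling is the one point requiring care. In short: same key lemma and same coupling computation, but your organization is the rigorous version of what the paper compresses into a self-referential inequality.
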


\begin{proof}
The proof can be found in Appendix \ref{val_lip_proof}.  Note that the result also follows from Theorem 4.1. (c) in \cite{hinderer2005lipschitz}.
\end{proof}

The following is a simple example which shows that the value function difference can be unbounded if $2K_f\beta\geq 1$:
\begin{example}
We consider a control-free (without loss of generality) team problem where $\mathds{X}=\mathds{R}$, and where the dynamics are deterministic such that for every agent $i$
\begin{align*}
x^i_{t+1}=f(x_t^i,u_t^i,\mu_t)=x^i_t+E_{\mu_t}\left[X\right],
\end{align*}
and the stage-wise cost function is given by
\begin{align*}
c(x_t^i,u_t^i,\mu_t)=x_t^i.
\end{align*}
We can check that
\begin{align*}
|f(x',u',\mu')-f(x,u,\mu)|&\leq |x-x'|+\left|\int x\mu(dx)-\int x\mu'(dx)\right|\\
&\leq |x-x'|+W_1(\mu,\mu')
\end{align*}
which implies that $K_f=1$.

We now consider two initial conditions
\begin{align*}
x_0^i=1,\qquad \hat{x}_0^i=0
\end{align*}
for all $i$. With this initial conditions, we get that $x_t^i=2^t$ and $\hat{x}_t^i=0$ for all $t$ and $i$. Furthermore, we can compute the value functions:
\begin{align*}
&K_\beta^{N,*}(\mu_0)=\sum_{t=0}^\infty (2\beta)^t\\
&K_\beta^{N,*}(\hat{\mu}_0)=\sum_{t=0}^\infty 0=0\\
\end{align*}
Hence, if $\beta\geq 1/2$, i.e if $2\beta K_f\geq 1$, the difference between the value functions stays unbounded.

\end{example}

The following result shows that the difference between the value functions of the finite measure valued model and the original measure valued model can be bounded:
\begin{proposition}\label{key_lem}
Under Assumption \ref{main_assmp}, if $2K_f\beta<1$, for any $\mu\in\P_N({\mathds{X}})$
\begin{align*}
\left|\hat{K}_\beta^{N,*}(\mu)-K_\beta^{N,*}(\mu)\right|\leq \frac{2K_c}{(1-\beta)(1-2\beta K_f)}L_{\mathds{X}}.
\end{align*}
\end{proposition}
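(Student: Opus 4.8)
The plan is to run a contraction (fixed-point) argument comparing the two optimal value functions through their dynamic programming equations. Write $V^*:=K_\beta^{N,*}$ and let $\hat V^*$ denote the extension of $\hat K_\beta^{N,*}$ to $\P_N(\mathds{X})$ that is constant on each cell $A_j$. Let $T$ and $\hat T$ be the Bellman optimality operators of the original and the finite measure valued MDPs; both are $\beta$-contractions in the supremum norm (the stage costs are bounded because $c$ is continuous on the compact set $\mathds{X}\times\mathds{U}$), so $V^*=TV^*$ and $\hat V^*=\hat T\hat V^*$ are their unique fixed points. The first step is to rewrite the finite operator in terms of the original data: since $\hat k$ and $\hat\eta$ are, by the definitions (\ref{finite_cost}) and (\ref{finite_kernel}), $\hat\pi_j$-averages over $A_j$ of $k(\cdot,\Theta_\cdot)$ and $\eta(\cdot|\cdot,\Theta_\cdot)$, and since $\hat V^*$ is constant on every cell $A_i$, one obtains $\hat T\hat V^*(\hat\mu_j)=\min_{\hat\Theta}\int_{A_j}\big[k(\nu,\Theta_\nu)+\beta\int \hat V^*\,d\eta(\cdot|\nu,\Theta_\nu)\big]\hat\pi_j(d\nu)$, where $\Theta_\nu(dx,du)=\gamma(du|\phi(x))\nu(dx)$ is the bin-measurable action induced by $\hat\Theta$.

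Set $e:=\|\hat V^*-V^*\|_\infty$, fix $\mu\in A_j$, and abbreviate $\mathrm{Lip}:=\frac{2K_c}{1-2K_f\beta}$ for the Lipschitz constant of $V^*$ supplied by Lemma \ref{val_lip}; I would bound $|\hat V^*(\mu)-V^*(\mu)|$ in both directions. For the lower direction (the finite value cannot be much smaller), note that every induced action $\Theta_\nu$ is admissible at $\nu$, so the bracket above is at least $TV^*(\nu)=V^*(\nu)$ after replacing $\hat V^*$ by $V^*$ in the continuation term, which costs at most $\beta e$; averaging and using that $V^*$ oscillates by at most $\mathrm{Lip}\,L_{\mathds{X}}$ on $A_j$ (Lemma \ref{quant_lem} together with Lemma \ref{val_lip}) yields $\hat V^*(\mu)\ge V^*(\mu)-\mathrm{Lip}\,L_{\mathds{X}}-\beta e$. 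The upper direction is the crux: I must exhibit a single admissible $\hat\Theta$ whose induced actions $\Theta_\nu$ are simultaneously near-optimal across the entire cell. Starting from an optimizer $\Theta^*_\mu$ of $TV^*(\mu)$ and transporting it to each $\nu\in A_j$ via Lemma \ref{cost_kernel_bound}, I obtain $k(\nu,\Theta_\nu)\le k(\mu,\Theta^*_\mu)+2K_c W_1(\mu,\nu)$ and $W_1(\eta(\cdot|\nu,\Theta_\nu),\eta(\cdot|\mu,\Theta^*_\mu))\le 2K_f W_1(\mu,\nu)$; converting the latter into a value gap through the Lipschitzness of $V^*$ and using the algebraic identity $2K_c+2\beta K_f\,\mathrm{Lip}=\mathrm{Lip}$ collapses both coupling errors into exactly $\mathrm{Lip}\,W_1(\mu,\nu)\le\mathrm{Lip}\,L_{\mathds{X}}$, giving $\hat V^*(\mu)\le V^*(\mu)+\mathrm{Lip}\,L_{\mathds{X}}+\beta e$.

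Combining the two directions gives $|\hat V^*(\mu)-V^*(\mu)|\le\mathrm{Lip}\,L_{\mathds{X}}+\beta e$ for every $\mu$, hence $e\le\mathrm{Lip}\,L_{\mathds{X}}+\beta e$, and solving for $e$ produces $e\le\frac{\mathrm{Lip}}{1-\beta}L_{\mathds{X}}=\frac{2K_c}{(1-\beta)(1-2\beta K_f)}L_{\mathds{X}}$, which is the claim. The main obstacle I anticipate is the upper-bound construction: the finite action set contains only bin-measurable actions, so the induced $\Theta_\nu$ necessarily mixes the actions of agents sharing a cell, and one must verify that this mixing, together with the transport of $\Theta^*_\mu$ across the cell, stays controlled — this is precisely where Lemma \ref{cost_kernel_bound} (comparison of two close measures under a common action vector, with its factor-two constants) does the work, since within a cell all states are $L_{\mathds{X}}$-close. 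The cancellation $2K_c+2\beta K_f\,\mathrm{Lip}=\mathrm{Lip}$ is what makes the constant tight rather than merely $\frac{2K_c+2\beta K_f\mathrm{Lip}}{1-\beta}$ plus extra terms; the contraction bookkeeping and the averaging rewrite of $\hat T$ are routine by comparison.
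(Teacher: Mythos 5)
Your proposal is correct and follows essentially the same route as the paper's proof: both compare the two Bellman (fixed-point) equations, transport the optimizer of the original problem into the finite model via Lemma \ref{cost_kernel_bound}, invoke Lemma \ref{quant_lem} and Lemma \ref{val_lip} for the cell oscillation and the Lipschitz constant of $K_\beta^{N,*}$, and close a self-referential inequality in $\sup_\mu\left|\hat{K}_\beta^{N,*}(\mu)-K_\beta^{N,*}(\mu)\right|$ to obtain exactly the constant $\frac{2K_c}{(1-\beta)(1-2\beta K_f)}$. The only cosmetic difference is that your lower direction argues via pointwise admissibility of the induced actions on the cell, where the paper disposes of that case by symmetric (``almost identical'') steps.
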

\begin{proof}
The proof can be found in Appendix \ref{key_lem_proof}.
\end{proof}



\subsection{Near Optimality of the Policy Constructed From the Finite Model}\label{planning_sec}
We first summarize the steps to construct the approximate policy. 
\begin{itemize}
\item Construct the transition kernel and the cost function for the measure valued centralized MDP using (\ref{trans_measure}) and (\ref{cost_measure}).
\item Choose a normalizing measure $\pi \in \P(\mathds{X})$ and quantization bins $\{B_i\}_{i=1}^M\subset \mathds{X}$, such that $\cup_{i=1}^M B_i=\mathds{X}$. Choose representative states for the quantization bins, say $\{\hat{x}_i\}_{i=1}^M$ and denote the finite space by $\hat{\mathds{X}}=\{\hat{x}_1,\dots,\hat{x}_M\}$. 
\item Construct a finite MDP based on the measure valued MDP using (\ref{finite_kernel}) and (\ref{finite_cost}).
\item Compute the optimal policy for the finite model, say $\hat{g}^N:\P_N(\hat{\mathds{X}})\to\P_N(\hat{\mathds{X}}\times \mathds{U})$.
\item Compute the policies for the agents using $\hat{g}^N$, say $\hat{\gamma}=\{\hat{\gamma}^1,\dots,\hat{\gamma}^N\}$
\end{itemize}

\begin{theorem}\label{main_thm2}
Under Assumption \ref{main_assmp} if $2\beta K_f<1$, and if we denote the policies induced by the measure valued finite model by $\hat{\gamma}$ and apply it in the original team problem, we will have
\begin{align*}
J_\beta^N({\bf x}_0,\hat{\gamma})-J_\beta^{N,*}({\bf x}_0)\leq \frac{4K_c}{(1-\beta)^2(1-2\beta K_f)}L_{\mathds{X}}
\end{align*}
 where $L_\mathds{X}$ is the uniform quantization error.
\end{theorem}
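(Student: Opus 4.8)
The plan is to push everything into the measure-valued MDP of Section \ref{meas_valued_sec} and then run a single contraction argument. Write $\hat g$ for the stationary policy on $\P_N(\mathds{X})$ that at $\mu\in A_j$ plays the lifted action $\Theta_\mu$ obtained from $\hat g^N(\hat\mu_j)$ via the disintegration recipe below (\ref{finite_cost}). By the policy-equivalence underlying Theorem \ref{thm_meas}, the team cost of $\hat\gamma$ equals the measure-valued cost of $\hat g$, i.e. $J^N_\beta({\bf x}_0,\hat\gamma)=K^N_\beta(\mu_0,\hat g)$, so it suffices to bound $\Phi:=K^N_\beta(\cdot,\hat g)-K^{N,*}_\beta\ge 0$ in sup-norm over $\P_N(\mathds{X})$. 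I abbreviate $V^{\hat g}:=K^N_\beta(\cdot,\hat g)$, $V^*:=K^{N,*}_\beta$, and let $\hat V:=\hat K^{N,*}_\beta\circ\phi$ be the finite optimal value extended to be constant on each cell $A_j$; Proposition \ref{key_lem} then reads $\|\hat V-V^*\|_\infty\le e$ with $e:=\frac{2K_c}{(1-\beta)(1-2\beta K_f)}L_{\mathds{X}}$.

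Next I would exploit two fixed-point identities. On one hand $V^{\hat g}$ solves the policy-evaluation equation $V^{\hat g}(\mu)=k(\mu,\Theta_\mu)+\beta\int V^{\hat g}\,d\eta(\cdot|\mu,\Theta_\mu)$. On the other hand, feeding the integral representations (\ref{finite_cost}) and (\ref{finite_kernel}) into the finite Bellman optimality equation for $\hat g^N$, and using that $\hat V$ is constant on cells, yields for $\mu\in A_j$ the averaged identity
\begin{align*}
\hat V(\mu)=\int_{A_j}\Big(k(\nu,\Theta_\nu)+\beta\int\hat V\,d\eta(\cdot|\nu,\Theta_\nu)\Big)\hat\pi_j(d\nu).
\end{align*}
Subtracting, and using $\Phi\ge 0$ with $\|\hat V-V^*\|_\infty\le e$, gives $\Phi(\mu)\le\big(k(\mu,\Theta_\mu)-\int_{A_j}k(\nu,\Theta_\nu)\hat\pi_j(d\nu)\big)+\beta\,(\text{transition difference})+e$. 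The cost term is controlled by $\sup_{\nu\in A_j}|k(\mu,\Theta_\mu)-k(\nu,\Theta_\nu)|\le 2K_c L_{\mathds{X}}$, via Lemma \ref{cost_kernel_bound} and the cell bound $W_1\le L_{\mathds{X}}$ of Lemma \ref{quant_lem}.

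The main obstacle is the transition difference $\int V^{\hat g}\,d\eta(\cdot|\mu,\Theta_\mu)-\int_{A_j}\int\hat V\,d\eta(\cdot|\nu,\Theta_\nu)\hat\pi_j(d\nu)$: the integrand $\hat V=\hat K^{N,*}_\beta\circ\phi$ is piecewise constant, so the kernel-continuity estimate of Lemma \ref{cost_kernel_bound} cannot be applied to it directly. I would resolve this in three moves: (i) split off $\int(V^{\hat g}-V^*)\,d\eta\le\|\Phi\|_\infty$; (ii) replace $\hat V$ by the genuinely Lipschitz $V^*$ at an additional cost $e$ through Proposition \ref{key_lem}; and (iii) bound $\int V^*\,d\eta(\cdot|\mu,\Theta_\mu)-\int V^*\,d\eta(\cdot|\nu,\Theta_\nu)$ for $\nu\in A_j$ by $\frac{2K_c}{1-2\beta K_f}W_1\big(\eta(\cdot|\mu,\Theta_\mu),\eta(\cdot|\nu,\Theta_\nu)\big)\le\frac{2K_c}{1-2\beta K_f}\cdot 2K_f L_{\mathds{X}}$, combining the Lipschitz continuity of $V^*$ (Lemma \ref{val_lip}) with the kernel bound of Lemma \ref{cost_kernel_bound} and Lemma \ref{quant_lem}. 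Collecting the three estimates produces the recursion
\begin{align*}
\|\Phi\|_\infty\le \beta\|\Phi\|_\infty+(1+\beta)e+2K_c L_{\mathds{X}}+\frac{4\beta K_c K_f}{1-2\beta K_f}L_{\mathds{X}}.
\end{align*}

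Solving for $\|\Phi\|_\infty$ and substituting $e$, the bracketed terms telescope: the $\frac{4\beta K_cK_f}{1-2\beta K_f}L_{\mathds{X}}$ contribution merges with the explicit $2K_cL_{\mathds{X}}$ to restore the factor $\frac{1}{1-2\beta K_f}$, after which the right-hand side collapses to exactly $\frac{4K_c}{(1-\beta)^2(1-2\beta K_f)}L_{\mathds{X}}$, the claimed bound. The one point requiring care throughout is that, since $\mu,\nu\in A_j$ have identical discretized distributions, the conditional action $\gamma(\cdot\mid\phi(\cdot))$ extracted from $\hat g^N(\hat\mu_j)$ underlies both $\Theta_\mu$ and $\Theta_\nu$, so that after reordering the two pairs share a common action vector and Lemma \ref{cost_kernel_bound} genuinely applies — this is precisely the compatibility built into the lift in (\ref{finite_cost}).
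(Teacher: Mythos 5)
Your proposal is correct and follows essentially the same route as the paper's proof: the same reduction $J^N_\beta({\bf x}_0,\hat\gamma)=K^N_\beta(\mu_0,\hat g)$ via Theorem \ref{thm_meas}, the same four-way decomposition of the Bellman residual (cost difference bounded by $2K_cL_{\mathds{X}}$ via Lemmas \ref{cost_kernel_bound} and \ref{quant_lem}, a swap of $\hat K^{N,*}_\beta$ for $K^{N,*}_\beta$ costing $e$ by Proposition \ref{key_lem}, a kernel term bounded by $\|K^{N,*}_\beta\|_{Lip}\cdot 2K_fL_{\mathds{X}}$ via Lemmas \ref{val_lip} and \ref{cost_kernel_bound}, and the self-referential $\beta\|\Phi\|_\infty$ term), leading to the identical recursion $(1-\beta)\|\Phi\|_\infty\le 2K_cL_{\mathds{X}}+(1+\beta)e+\frac{4\beta K_cK_f}{1-2\beta K_f}L_{\mathds{X}}$ and the same closing algebra. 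Your explicit attention to why Lemma \ref{cost_kernel_bound} applies (the shared conditional action across a cell $A_j$) is exactly the point the paper also relies on.
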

\begin{proof}
We denote by $\mu_0=\mu_{\bf x_0}$, and $\hat{\mu}_0=\mu_{\hat{\bf x}_0}$. The agent level policies $\hat{\gamma}$ are obtained from the measure valued policy $\hat{g}$ (we drop the $N$ dependence for notation simplicity). Furthermore, we can disintegrate $\hat{g}(\hat{\mu}_0)$ -the optimal action for $\hat{\mu}_0$ for the discretized model- to write
\begin{align*}
\hat{g}(\hat{\mu}_0)=\hat{\gamma}_{\mu_0}(du|x)\hat{\mu}_0(dx).
\end{align*}
note that $\hat{\gamma}_{\mu_0}(du|x)$ is not the policy used by the agents, but just a conditional probability for the distribution of the agents on $\mathds{U}$.

Application of $\hat{\gamma}$ on the original model for the measure valued MDP induces the following distribution on $\mathds{X}\times\mathds{U}$
\begin{align*}
\hat{g}(\mu_0)=\hat{\gamma}_{\mu_0}(du|\phi(x))\mu_0(dx).
\end{align*}
Using Theorem \ref{thm_meas} (ii), we have that
\begin{align*}
&J_\beta^N({\bf x}_0,\hat{\gamma})=K^N_\beta(\mu_0,\hat{g})\\
&J_\beta^{N,*}({\bf x}_0)=K_\beta^{N,*}(\mu_0)
\end{align*}

We start with the following bound
\begin{align}\label{first_bound}
K_\beta^N(\mu_0,\hat{g})-K_\beta^{N,*}(\mu_0)\leq |K^N_\beta(\mu_0,\hat{g})-\hat{K}_\beta^{N,*}(\mu_0)|-|\hat{K}_\beta^{N,*}(\mu_0)-K_\beta^{N,*}(\mu_0)|.
\end{align}
The second term is bounded by Proposition \ref{key_lem}, so we focus on the first term. Recall that, for $\hat{K}_\beta^{N,*}(\mu_0)$, given that $\mu_{\phi({\bf x_0})}=\mu_j$, we have 
\begin{align*}
\hat{K}_\beta^{N,*}(\mu_0)=\int_{A_j}k(\mu',\Theta_{\mu'})\hat{\pi}_j(d\mu')+\beta\int_{A_j}\int_{\mu_1}\hat{K}_\beta^{N,*}(\mu_1)\eta(d\mu_1|\mu',\Theta_{\mu'})\hat{\pi}_j(d\mu'),
\end{align*}
where for $\Theta_{\mu'}=g^N(\mu')$, and thus, $\Theta_{\mu'}(dx,du)=\hat{\gamma}(du|\phi(x))\mu'(dx)$.

The Bellman equation for $K^N_\beta(\mu_0,\hat{g})$ can be written as
\begin{align*}
K^N_\beta(\mu_0,\hat{g})&=k(\mu_0,\hat{g}(\mu_0))+\beta\int K^N_\beta(\mu_1,\hat{g})\eta(d\mu_1|\mu_0,\hat{g}(\mu_0)).
\end{align*}
For $\mu_0\in A_j$ and for any $\mu'\in A_j$, since $\hat{g}$ uses the discretized versions of the state variables, and since $\mu$ and $\mu_j$ belong to the same quantization bin, we can find state and action vectors  ${\bf x,x',u}$ in accordance with Lemma \ref{cost_kernel_bound} such that $\mu_{\bf (x,u)}=\hat{g}(\mu_0)$ and  $\mu_{\bf (x',u)}=\hat{g}(\mu')$. Hence, by Lemma \ref{cost_kernel_bound}
\begin{align*}
|k(\mu_0,\hat{g}(\mu_0))-k(\mu',\hat{g}(\mu'))|\leq 2K_c W_1(\mu_0,\mu')\\
W_1(\eta(\cdot|\mu_0,\hat{g}(\mu_0)),\eta(\cdot|\mu',\hat{g}(\mu')))\leq  2K_f W_1(\mu_0,\mu').
\end{align*}
We can then write the following:
\begin{align*}
&|K^N_\beta(\mu_0,\hat{g})-\hat{K}_\beta^{N,*}(\mu_0)|\leq \int_{A_j}\left|k(\mu',\Theta_{\mu'})-k(\mu_0,\hat{g}(\mu_0))\right|\hat{\pi}_j(d\mu')\\
&+\left|\beta\int_{A_j}\int_{\mu_1}\hat{K}_\beta^{N,*}(\mu_1,g)\eta(d\mu_1|\mu',\Theta_{\mu'})\hat{\pi}_j(d\mu')-\beta\int_{A_j}\int_{\mu_1}K_\beta^{N,*}(\mu_1)\eta(d\mu_1|\mu',\Theta_{\mu'})\hat{\pi}_j(d\mu')\right|\\
&+\left|\beta\int_{A_j}\int_{\mu_1}K_\beta^{N,*}(\mu_1)\eta(d\mu_1|\mu',\Theta_{\mu'})\hat{\pi}_j(d\mu')-\beta\int K_\beta^{N,*}(\mu_1)\eta(d\mu_1|\mu_0,\hat{g}(\mu_0))\right|\\
&+\left|\beta\int K_\beta^{N,*}(\mu_1)\eta(d\mu_1|\mu_0,\hat{g}(\mu_0))-\beta\int K^N_\beta(\mu_1,\hat{g})\eta(d\mu_1|\mu_0,\hat{g}(\mu_0))\right|\\
&\leq 2K_cL_{\mathds{X}}+\beta\sup_{\mu}\left|\hat{K}_\beta^{N,*}(\mu)-K_\beta^{N,*}(\mu)\right|+\beta\|K_\beta^{N,*}\|_{Lip}2K_fL_{\mathds{X}}+\beta\sup_{\mu}\left|K^N_\beta(\mu,\hat{g})-K_\beta^{N,*}(\mu)\right|.
\end{align*}
Combining this bound and (\ref{first_bound}), and with an application of Proposition \ref{key_lem} and Lemma \ref{val_lip}, we get
\begin{align*}
(1-\beta)\sup_{\mu}\left|K^N_\beta(\mu,\hat{g})-K_\beta^{N,*}(\mu)\right|\leq 2K_cL_{\mathds{X}}+\frac{2K_c(1+\beta)}{(1-\beta)(1-2K_f\beta)}L_{\mathds{X}}+\frac{2K_c2K_f\beta}{1-2K_f\beta}L_{\mathds{X}}.
\end{align*}
Combining the terms, we get
\begin{align*}
\sup_{\mu}\left|K^N_\beta(\mu,\hat{g})-K_\beta^{N,*}(\mu)\right|\leq\frac{4K_c}{(1-\beta)^2(1-2K_f\beta)}L_{\mathds{X}}.
\end{align*}

\end{proof}

\section{Infinite Population Limit}\label{inf_pop}

In this section, we will look at the infnite population problem, i.e. the limit $N\to\infty$. We will first introduce the original problem for the infinite population, and then we will  analyze the infinite population limit for the model constructed in Section \ref{measure_valued_MDP}, i.e. the model defined on the discretized space. 

\subsection{Measure Valued MDP for the Infinite Population on $\mathds{X}$}

Similar to the construction in Section \ref{meas_valued_sec}, we will define a Markov decision process, where the control actions are the joint measures of the state and action pairs $(x,u)$

Different than Section \ref{meas_valued_sec}, we let the state space to be  $\mathcal{Z}=\P(\mathds{X})$ which is the set of all probability measures on $\mathds{X}$ (instead of the empirical measures).

The admissible set of actions for some state $\mu\in\mathcal{Z}$, is denoted by $U(\mu)$, where
\begin{align*}
U(\mu)=\{\Theta\in \P(\mathds{X}\times\mathds{U})|\Theta(\cdot,\mathds{{U}})=\mu(\cdot)\},
\end{align*}
that is, the set of actions for a state $\mu$, is the set of all joint probability measures on $\mathds{X}\times\mathds{U}$ whose marginal on $\mathds{X}$ coincides with $\mu$.

The stage-wise cost function is defined as before: for any $\mu\in\P(\mathds{X})$, and any admissible action $\Theta\in\P(\mathds{X}\times\mathds{{U}})$:
\begin{align*}
k(\mu,\Theta)=\int c(x,u,\mu)\Theta(dx,du)
\end{align*}
For the dynamics, we have that 
\begin{align*}
\mu_{t+1}=F(\mu_t,\Theta_t,w_t^0)
\end{align*}
where $w_t^0$ is the common noise, and
\begin{align*}
F(\mu_t,\Theta_t,w_t^0)=\mathcal{T}^{w^0_t}(\cdot|x,u,\mu_t)\Theta_t(dx,du)
\end{align*}
recall (\ref{kernel_common_noise}) for $\mathcal{T}^{w^0}$.

In particular, we define the infinite horizon discounted expected cost function under a policy $g$ by
\begin{align*}
K_\beta(\mu_0,g)=E_{\mu_0}^{g}\left[\sum_{t=0}^\infty \beta^t k(\mu_t,\Theta_t)\right].
\end{align*}

We also define the optimal cost by
\begin{align*}
K_\beta^{*}(\mu_0)=\inf_{g\in G}K_\beta(\mu_0,g).
\end{align*}

\subsection{Measure Valued MDP for the Infinite Population on $\hat{\mathds{X}}$}\label{inf_on_fin}
We now study the effect of the space discretization on the infinite population problem. We first introduce the limit problem on the finite space $\hat{\mathds{X}}$.

As before,  we start by choosing a collection of disjoint sets $\{B_i\}_{i=1}^M$ such that $\cup_i B_i=\mathds{X}$, and $B_i\cap B_j =\emptyset$ for any $i\neq j$. Furthermore, we choose a representative state, $\hat{x}_i\in B_i$, for each disjoint set. We denote the new finite state space by
$\hat{\mathds{X}}:=\{\hat{x}_1,\dots,\hat{x}_M\}$. 
 The mapping from the original state space to the finite set $\hat{\mathds{X}}$ is done via
\begin{align}\label{quant_map}
\phi(x)=\hat{x}_i \quad \text{ if } x\in B_i.
\end{align}

Unlike the finite population problem, for the dynamics and the cost function on the finite space, we will not perform an averaging, but instead we will use the transition function and the cost function of the representative state directly for the ease of analysis on the averaging over the measure space. Therefore,  we choose the weight measures on the discretization bins as the Dirac measure on the representative states, that is, for any $i$, $\pi_i(\cdot)=\delta_{\hat{x}_i}(\cdot)$.

With these weight measures, the discretized team control problem for the $N$ player on $\hat{\mathds{X}}$ becomes a special case of the model introduced in Section \ref{measure_valued_MDP}, such that the stage-wise cost function is given by
\begin{align*}
{\bf c}({\bf \hat{x}_t,u_t})=\frac{1}{N}\sum_{i=1}^N c(\hat{x}_t^i,u_i^t,\mu_{\bf{\hat{x}_t}})
\end{align*}
and the dynamics for some agent $i$ are given by
\begin{align*}
\hat{x}^i_{t+1}=\phi(f(\hat{x}_t^i,u_t^i,\mu_{\bf \hat{x}_t},w_t^i,w_t^0)).
\end{align*}
That is, with the chosen weight measures, instead of averaging over the quantization bins for the cost and the dynamics, we use the cost and the transition functions of the original model for the representative state of the corresponding bin.

We then have by Proposition \ref{key_lem}, under Assumption \ref{main_assmp}, for any $\mu\in\P_N({\mathds{X}})$
\begin{align*}
\left|\hat{K}_\beta^{N,*}(\mu)-K_\beta^{N,*}(\mu)\right|\leq \frac{2K_c}{(1-\beta)(1-2\beta K_f)}L_{\mathds{X}}.
\end{align*}
since the given bound does not depend on the choice of the weight measure $\pi$.

We now define the corresponding measure valued control problem on $\P(\hat{\mathds{X}})$. This construction will correspond to the infinite population limit of the team problem defined on $\hat{\mathds{X}}$.
We let the state space of the measure valued MDP to be $\P(\mathds{\hat{X}})$ which is the set of all probability measures on $\mathds{\hat{X}}$.

The admissible set of actions for some state $\hat{\mu}$, is denoted by $U(\hat{\mu})$, where
\begin{align*}
U(\hat{\mu})=\{\hat{\Theta}\in \P(\mathds{\hat{X}}\times\mathds{{U}})|\hat{\Theta}(\cdot,\mathds{{U}})=\hat{\mu}(\cdot)\}.
\end{align*}

For any $\hat{\mu}\in\P(\mathds{\hat{X}})$, and any admissible action $\hat{\Theta}\in\P(\mathds{\hat{X}}\times\mathds{{U}})$:
\begin{align*}
k(\hat{\mu},\hat{\Theta})=\int c(x,u,\hat{\mu})\hat{\Theta}(dx,du)
\end{align*}
Note that $\hat{\Theta}$ is a measure on a finite space, but we keep the integral sign instead of using the summation sign for notation consistency.

For the dynamics, we have that 
\begin{align*}
\hat{\mu}_{t+1}=\hat{F}(\hat{\mu}_t,\hat{\Theta},w_t^0)
\end{align*}
where $w_t^0$ is the common noise, and
\begin{align*}
\hat{F}(\hat{\mu}_t,\hat{\Theta}_t,w_t^0):=\hat{\mathcal{T}}^{w^0_t}(\cdot|x,u,\hat{\mu}_t)\hat{\Theta}_t(dx,du)
\end{align*}
such that for any $\hat{x}_i\in\hat{\mathds{X}}$
\begin{align*}
\hat{\mathcal{T}}^{w^0_t}(\hat{x}_i|x,u,\hat{\mu}_t)=\mathcal{T}^{w^0_t}(B_i|x,u,\hat{\mu}_t)
\end{align*}
where $B_i$ is the quantization bin of the representative state $\hat{x}_i$. This measure flow represents the flow of the marginal distribution of the state of any agent in the infinite population on the finite space $\hat{\mathds{X}}$.

 The infinite horizon discounted expected cost function under a policy $\hat{g}$  is defined by
\begin{align*}
\hat{K}_\beta(\hat{\mu}_0,\hat{g})=E_{\hat{\mu}_0}^{\eta,\hat{g}}\left[\sum_{t=0}^\infty \beta^t k(\hat{\mu}_t,\hat{\Theta}_t)\right].
\end{align*}

We also define the optimal cost by $\hat{K}_\beta^{*}(\hat{\mu}_0)=\inf_{\hat{g}\in \hat{G}}\hat{K}_\beta(\hat{\mu}_0,\hat{g})$.

\subsection{Effect of Discretization on the Value Function of the Infinite Population MDP}
As the final result of this section, we show that the discretization results in the same error bound for the infinite population problem, as in the finite population problem (Proposition \ref{key_lem}).
\begin{proposition}\label{inf_disc}
Under Assumption \ref{main_assmp}, for any $\hat{\mu}_0$ and $\mu_0$ such that $\hat{\mu}_0(\hat{x}_i)=\mu(B_i)$ for all $i$,
\begin{align*}
\left|\hat{K}_\beta^{*}(\hat{\mu}_0)-K_\beta^{*}(\mu_0)\right|\leq \frac{2K_c}{(1-\beta)(1-2\beta K_f)}L_{\mathds{X}}.
\end{align*}
\end{proposition}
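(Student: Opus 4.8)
The plan is to mirror the Bellman-operator comparison behind Proposition \ref{key_lem}, but transported to the two infinite-population measure-valued MDPs, using the discretization map $\phi$ to couple a measure $\mu\in\P(\mathds X)$ with its pushforward $\hat\mu:=\phi_*\mu\in\P(\hat{\mathds X})$ (note $\hat\mu(\hat x_i)=\mu(B_i)$, which is exactly the hypothesis relating $\hat\mu_0$ and $\mu_0$). Two elementary but crucial facts drive everything: first, since $\phi$ displaces every point by at most $L_{\mathds X}$, the coupling $(x,\phi(x))$ with $x\sim\mu$ gives $W_1(\mu,\phi_*\mu)\le L_{\mathds X}$; second, by construction the discretized transition is the pushforward of the original one, i.e.\ $\hat F(\hat\mu,\hat\Theta,w^0)=\phi_*F(\hat\mu,\hat\Theta,w^0)$ when $\hat\mu,\hat\Theta$ are viewed as measures on $\mathds X,\mathds X\times\mathds U$ supported on the representatives $\{\hat x_i\}$.

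First I would record the infinite-population analogues of Lemmas \ref{cost_kernel_bound} and \ref{val_lip}: for $\mu,\mu'\in\P(\mathds X)$ and actions $\Theta,\Theta'$ built from a common conditional kernel $\gamma(du\mid x)$, an optimal $W_1$-coupling of $\mu,\mu'$ together with identical idiosyncratic and common noise yields, via the Lipschitz assumptions on $f$ and $c$, the bounds $|k(\mu,\Theta)-k(\mu',\Theta')|\le 2K_c W_1(\mu,\mu')$ and $W_1(F(\mu,\Theta,w^0),F(\mu',\Theta',w^0))\le 2K_f W_1(\mu,\mu')$ uniformly in $w^0$; the same argument as in Lemma \ref{val_lip} (or Theorem 4.1(c) of \cite{hinderer2005lipschitz}) then shows that $K_\beta^*$ is $W_1$-Lipschitz on $\P(\mathds X)$ with constant $\frac{2K_c}{1-2K_f\beta}$. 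These hold verbatim in the measure setting because the coupling construction never used the empirical (finite-$N$) structure.

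Then, setting $\Delta:=\sup_{\mu\in\P(\mathds X)}|\hat K_\beta^*(\phi_*\mu)-K_\beta^*(\mu)|$ and writing $W(\mu):=\hat K_\beta^*(\phi_*\mu)$, I would bound $\Delta$ by a one-step Bellman comparison. Fix $\mu$, $\hat\mu=\phi_*\mu$, and an $\epsilon$-optimal action $\Theta$ for $K_\beta^*(\mu)$; its pushforward $\hat\Theta:=(\phi\times\mathrm{id})_*\Theta$ is admissible for $\hat\mu$ (and, conversely, lifting an $\epsilon$-optimal $\hat\Theta$ through the same conditional kernel handles the reverse inequality). The running-cost gap is controlled by $|k(\hat\mu,\hat\Theta)-k(\mu,\Theta)|\le K_c(L_{\mathds X}+W_1(\mu,\hat\mu))\le 2K_c L_{\mathds X}$ using Lipschitzness of $c$ and $|x-\phi(x)|\le L_{\mathds X}$. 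For the continuation term the key device is to write $\hat\mu_1:=\hat F(\hat\mu,\hat\Theta,w^0)=\phi_*\mu_1''$ with $\mu_1'':=F(\hat\mu,\hat\Theta,w^0)\in\P(\mathds X)$, so that $\hat K_\beta^*(\hat\mu_1)=W(\mu_1'')$; coupling $F(\mu,\Theta,w^0)$ and $\mu_1''$ through $(x,u)\sim\Theta$ and common noise gives $W_1(F(\mu,\Theta,w^0),\mu_1'')\le K_f(|x-\phi(x)|+W_1(\mu,\hat\mu))\le 2K_f L_{\mathds X}$, whence $|\hat K_\beta^*(\hat\mu_1)-K_\beta^*(\mu_1)|\le\Delta+\frac{2K_c}{1-2K_f\beta}\,2K_f L_{\mathds X}$ by the Lipschitz bound on $K_\beta^*$. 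Taking $E_{w^0}$, letting $\epsilon\downarrow 0$, and using both inequality directions yields the recursion $\Delta\le 2K_c L_{\mathds X}+\beta\Delta+\frac{4K_c K_f\beta}{1-2K_f\beta}L_{\mathds X}$, which rearranges to exactly $\Delta\le\frac{2K_c}{(1-\beta)(1-2K_f\beta)}L_{\mathds X}$; evaluating at $\mu=\mu_0$ gives the claim.

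I expect the main obstacle to be the continuation step: comparing value functions that a priori live on the different state spaces $\P(\mathds X)$ and $\P(\hat{\mathds X})$ without accumulating a spurious extra $L_{\mathds X}$ from a second pushforward. The identity $\hat F=\phi_*F$ is what resolves this — it lets me pull $\hat\mu_1$ back to a genuine measure $\mu_1''$ on $\mathds X$ whose $\phi$-image equals $\hat\mu_1$ exactly, so that only one Lipschitz-of-$K_\beta^*$ estimate (not one for each value function, and with no $W_1(\phi_*\nu,\phi_*\nu')$ slack) is incurred. A secondary point requiring care is uniformity in the common noise $w^0$: every one-step coupling must be valid for each fixed $w^0$, which Assumption \ref{main_assmp} guarantees, before the outer expectation is taken.
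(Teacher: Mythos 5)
Your proof is correct, but it follows a genuinely different route from the paper's. The paper's own proof is a three-term triangle inequality through the \emph{finite-population} value functions: it writes
\begin{align*}
\left|\hat{K}_\beta^{*}(\hat{\mu}_0)-K_\beta^{*}(\mu_0)\right|\leq \left|\hat{K}_\beta^{*}(\hat{\mu}_0)-\hat{K}_\beta^{N,*}(\hat{\mu}_0^N)\right| +\left|\hat{K}_\beta^{N,*}(\hat{\mu}_0^N)- K_\beta^{N,*}(\mu_0^N)\right|+\left| K_\beta^{N,*}(\mu_0^N) -K_\beta^{*}(\mu_0)\right|,
\end{align*}
bounds the middle term by Proposition \ref{key_lem} (uniformly in $N$), and disposes of the two outer terms by invoking the mean-field convergence results of \cite{motte2022quantitative,motte2022mean,bauerle2021mean} as $N\to\infty$. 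You instead never leave the infinite-population setting: you run a one-step Bellman/coupling comparison directly between the two measure-valued MDPs, with the pushforward identity $\hat F(\hat\mu,\hat\Theta,w^0)=\phi_*F(\hat\mu,\hat\Theta,w^0)$ doing the work of aligning the two state spaces, and with measure-space analogues of Lemmas \ref{cost_kernel_bound} and \ref{val_lip} supplying the Lipschitz estimates. I checked your recursion: the exchange of $\epsilon$-optimal actions in both directions (pushforward one way, lifting through the disintegrated kernel $\hat\gamma(du|\phi(x))$ the other way) gives $\Delta\le 2K_cL_{\mathds X}+\beta\Delta+\frac{4K_cK_f\beta}{1-2K_f\beta}L_{\mathds X}$, which rearranges exactly to the stated constant. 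What each approach buys: the paper's proof is shorter but leans on external limit theorems, which must in particular apply to the discretized agent-level model (whose dynamics involve the discontinuous quantization map $\phi$), and it only controls the bound after a limiting argument; your proof is self-contained, produces the explicit constant without any $N\to\infty$ passage, and as a byproduct establishes the $W_1$-Lipschitz continuity of $K_\beta^*$ directly by coupling rather than via the paper's Lemma \ref{inf_lip} (which itself again rests on the finite-$N$ approximation and appears only later in the paper). Two minor points you should make explicit to be fully rigorous: $\Delta<\infty$ a priori (immediate since $c$ is bounded on the compact $\mathds X\times\mathds U\times\P(\mathds X)$, so both value functions are bounded by $\|c\|_\infty/(1-\beta)$), and the standing condition $2K_f\beta<1$, which your Lipschitz estimates require just as the paper's Proposition \ref{key_lem} does.
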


\begin{proof}
We write
\begin{align*}
\left|\hat{K}_\beta^{*}(\hat{\mu}_0)-K_\beta^{*}(\mu_0)\right|\leq &\left|\hat{K}_\beta^{*}(\hat{\mu}_0)-\hat{K}_\beta^{N,*}(\hat{\mu}_0^N)\right|+\left|\hat{K}_\beta^{N,*}(\hat{\mu}_0^N)- K_\beta^{N,*}(\mu_0^N)\right|\\
&+\left| K_\beta^{N,*}(\mu_0^N) -K_\beta^{*}(\mu_0)\right|
\end{align*}
where $\mu_0^N,\hat{\mu}_0^N$ are empirical measures coming from, correspondingly, $\mu_0,\hat{\mu}_0$. It is proven in \cite{motte2022quantitative,motte2022mean, bauerle2021mean} that the first and the last terms converge to $0$. Furthermore, the second term is bounded by $\frac{2K_c}{(1-\beta)(1-2\beta K_f)}L_{\mathds{X}}$ by Proposition \ref{key_lem}, which does not depend on $N$, hence the proof is complete.
\end{proof}

\section{Further Approximations for Large N}\label{large_N}
So far, we have focused on approximations based on space discretization. In particular, we have presented approximate models by discretizing the state ($\mathds{X}$) and action ($\mathds{U}$) spaces of the agents. We have studied the regret bounds for  policies constructed using the finite space models, addressing both the $N$-player team problem and the infinite population limit ($N \to \infty$) problem.

The state space of the finite measure valued model, $\P_N(\hat{\mathds{X}})$, which is the set of all empirical measures on $\hat{\mathds{X}}$ with length $N$,  have  size $\frac{(M+N-1)!}{(N-1)!M!}$ where $M$ is the number of quantization bins. For the infinite population problem on the finite space, the state space of the measure valued problem becomes $\P_N(\hat{\mathds{X}})$ which is simply the simplex over the finite set $\mathds{\hat{X}}$. Therefore, although discretization simplifies the model, the problem remains complex for large populations ($N \to \infty$).

An immediate observation is that as $N$ increases, the state space of the measure-valued MDP grows significantly. Hence, going to the limit problem may not be appealing for the sole purpose of finding the optimal solution. However, another challenge for large, but finite $N$, lies in the coordination of agents when implementing the optimal policy. The centralized MDP construction for finite $N$, provides a recipe for the team action: given the distribution of the agents, say $\mu^N$, the goal is to decide on the state-action distribution of the agents, say $\Theta^N(du,dx)$. In order to realize $\Theta^N$, the agents need to be coordinated separately. In fact, the agent-level policies, in general turns out be non-symmetric. For instance, if agents follow a symmetric randomized policy, $\gamma^N(du|x,\mu^N)$, by disintegrating $\Theta^N(du,dx) = \gamma^N(du|x,\mu^N)\mu^N(dx)$, the resulting action distribution will differ from $\Theta^N(du,dx)$ with positive probability.

The limit, $N\to\infty$, problem helps address the coordination challenge among agents. In this case, the solution provides the agents a recipe for the optimal action, represented by $\Theta(du,dx) \in \mathcal{P}(\mathds{X} \times \mathds{U})$, when the agent distribution is given by some $\mu\in\P(\mathds{X})$. In the limit case, the agents can simply disintegrat $\Theta$, e.g. $\Theta(du,dx)=\gamma(du|x,\mu)\mu(dx)$, and use the randomized policy $\gamma$  symmetrically without coordination, which will achieve the optimal performance. Hence, if the optimal control problem is solved, the application of the optimal control is straightforward, since every agent can use the same policy,  provided they have access to the mean-field term $\mu$.

Therefore, although the $N\to\infty$ limit increases the complexity of the optimality problem,  the coordination burden between the agents decreases significantly as the limit solution enables a more straightforward application of the optimal control.

The final challenge, for the large $N$ (or $N\to \infty$) problem, is the observation of the state, i.e. the mean-field term, or the exact distribution of the agents. As the number of agents grows, it becomes more difficult to observe and use the exact distribution. For the particular case, when $N\to\infty$, the mean-field term is simply the marginal distribution of the state of the representative agent, which can be computed by the agent when there is no common noise affecting the dynamics, or when the common noise realizations are available. However, in general, one might need to control the system under partial or incorrect observations of the perfect mean-field term (perfect distribution of the agents).

In this section, we focus on the limit problem, and we aim  to simplify the optimality analysis.  We propose two approximation methods to address both space reduction and the incomplete feedback observation problem. Both methods will build on the finite space model introduced in Section \ref{finite_model_sec} for finite populations, and in Section \ref{inf_on_fin} for infinite populations.

For the first approximation method, we   construct a new finite problem, by directly aggregating the finite space infinite population problem based on possible distribution of $n$ agents, where $n$ is a manageable number. Note that, this is not the same as solving a team problem  with $n$ agents, but rather an averaging of the infinite population problem, over the possible distribution of $n$ agents. This approximate model provides symmetric policies, defined on  a finite space, for the  finitely many agent distributions. To implement this approximate policy, we  assume that we can observe the mean-field term exactly, say $\mu\in\P(\mathds{X})$, and map $\mu$ to the nearest $\mu^n$ (possible distribution of $n$ agents), and instructing the agents to use the symmetric policy $\gamma(du|\hat{x},\mu^n)$, where $\hat{x}$ is the discretized version of the state $x$.

For the second method, we focus on a finite model for the infinite population problem by sampling the population randomly and select $n$ random agents to observe and control the system. The finite model is constructed considering possible movements of $n$ agents in an environment with infinitely many agents. After solving for the approximate control policy, which will be symmetric for all agents,  instead of using the exact mean-field term, we  only observe the distribution of $n$ agents at random, and use their state distribution $\mu^n$ to construct the policy $\gamma(du|\hat{x},\mu^n)$. For the implementation of this policy, each agent can either use the same sub-sample as others or use their own independent sub-sample.

In both cases we will be able to provide regret bounds of the obtained approximate policies  when they are used in the original setting with continuous spaces by comparing them to the optimal performance  of the original problem.


\subsection{Finite Model Construction via Direct Aggregation of Measures}\label{finite_inf_pop}
In this section, we present an approximation method for large populations, which is done by aggregating the measures, by considering the infinite population problem. The approximation will be done using the discretized state space $\hat{\mathds{X}}$, and the measures defined on it $\P(\hat{\mathds{X}})$. Since, $\P(\hat{\mathds{X})}$ is uncountable, being the simplex over $\mathds{\hat{X}}$, we will use all possible distributions of $n$ agents, which we denote by $\P_n(\hat{\mathds{X}})$. In the following construction, we will map the set $\P(\hat{\mathds{X}})$ to  $\P_n(\hat{\mathds{X}})$ with a nearest neighbor mapping, and treat the measures that are mapped to the same empirical measure as the same aggregate state, with a proper averaging. In other words, for any $\hat{\mu}^n_i\in  \P_n(\hat{\mathds{X}})$, the set
\begin{align*}
A_i=\{\hat{\mu}\in\P(\hat{\mathds{X}}):\rho(\hat{\mu})=\hat{\mu}^n_i\}
\end{align*}
will be treated as one state, where $\rho$ denotes the nearest neighbor map under the first order Wasserstein distance $W_1$, i.e.
\begin{align*}
W_1\left(\rho(\hat{\mu}),\hat{\mu}\right)\leq W_1\left(\hat{\mu}^n,\hat{\mu}\right)
\end{align*} 
for any $\hat{\mu}^n$.

We now construct a finite MDP for the infinite population, by defining the stage-cost and the transition functions. Different from the previous constructions, for the control action, we consider the randomized agent level policies $\gamma(du|x,\mu)$. Furthermore, we will not use an averaging over the aggregate sets.

For the cost function, we use
\begin{align*}
\hat{k}(\hat{\mu}^n,\gamma)&=\int_{x\in \hat{\mathds{X}}}\int_{u\in\mathds{U}} c(x,u,\hat{\mu}^n)\gamma(du|x,\hat{\mu}^n)\hat{\mu}^n(dx)\\
&=\sum_{\hat{x}_j,\hat{u}_i}c(\hat{x}_j,\hat{u}_i,\hat{\mu}^n)\gamma(\hat{u}_i|\hat{x}_j,\hat{\mu}^n)\hat{\mu}^n(\hat{x}_j)
\end{align*}
note that $\hat{\mathds{X}}$ and  $\mathds{U}$ are finite sets, however, we still sometimes use integration sign instead of summation for notation consistency.

For the transition function:
\begin{align}\label{trans_agg}
Pr(\hat{\mu}^n_i|\hat{\mu}^n_j,\gamma)=Pr\left(w_0: \rho\left(\int \mathcal{T}^{w_0}(\cdot|x,u,\hat{\mu}^n_j)\gamma(du|x,\hat{\mu}^n_j)\hat{\mu}^n_j(dx)\right)=\hat{\mu}^n_i\right)
\end{align}
in words, we consider the $\hat{\mu}_j^n$ as a usual probability measure on $\mathds{X}$, which is simply a combination of Dirac measures on the discrete points $\hat{x}_i$. We then look at the possible incoming measures under different common noise realizations, and we take the ones that will be mapped to $\hat{\mu}^n_i$ by the map $\rho$.

We denote the optimal cost for this model by $\hat{K}_\beta^*(\hat{\mu}^n)$ by overriding the notation.

\begin{remark}
We note that the aggregation of the measures can be done for the finite population problems as well without considering the infinite population limit, i.e. if the original number of agents is given by $N$, one can aggregate the measures $\hat{\mu}^N$ and map them to some $\hat{\mu}^n$.  This can also be done by rounding the number of agents on the quantization bins to get a cruder counting method, which also simplifies the distribution of the $N$ agents. This aggregation decreases the size of the state space from $\P_N(\hat{\mathds{X}})$ to $\P_n(\hat{\mathds{X}})$, however, the coordination of the agents is not simplified, since, without going to the infinite limit symmetric policies are not optimal. Furthermore, if we map the original distribution $\hat{\mu}^N$ to some $\hat{\mu}^n$, or count the agents by rounding, we should also keep track of how the agents are moved in order to map the original distribution, since we will need the inverse of this map to decide on the action distribution of the agents.

As an example, consider a team problem with 4 agents, where the state and action spaces are $\mathds{X,U}=\{0,1\}$. Say, the state distribution of the agents is given by $\mu(\cdot)=\frac{1}{4}\delta_0(\cdot)+\frac{3}{4}\delta_1(\cdot)$, that is 3 agent have state $1$ and one agent has state $0$. Further assume that we aggregate the distributions using possible distributions of 2 agents, and map the original state distribution to $\mu'(\cdot)=\delta_1(\cdot)$ (we might map it to $\frac{1}{2}\delta_0 + \frac{1}{2}\delta_1$ as well). If the action assigned to $\delta_1(\cdot)$ is $\frac{1}{2}\delta_{(1,0)}(\cdot) +\frac{1}{2}\delta_{(1,1)}(\cdot)$, then to apply this action in the original state, we need to record which agent is moved to state $1$, so that its action is also adjusted accordingly. 

Since the coordination complexity remains without going to the infinite population limit, we consider the infinite population to simplify the coordination challenge between the agents.

\end{remark}

\subsubsection{Approximate Control by Observing the Mean-field Term Perfectly}

We summarize the construction we will use in this section as follows:
\begin{itemize}
\item Construct and solve the finite model presented in Section \ref{finite_inf_pop}. This will provide an agent level policy $\gamma(du|\hat{x},\hat{\mu}^n)$, for every possible $\hat{\mu}^n$, and every $\hat{x}\in\mathds{X}$.
\item To apply this policy on the original model, observe the mean-field term, say $\mu\in\P(\mathds{X})$, find its counterpart say $\hat{\mu}$ defined on $\hat{\mathds{X}}$. This is done directly by defining $\hat{\mu}(\hat{x}_i)=\mu(B_i)$, for every  quantization bin $B_i\subset \mathds{X}$.   
\item Find the nearest possible empirical measure on $\mathds{\hat{X}}$, to $\hat{\mu}$, i.e. find $\rho(\hat{\mu})$.
\item Using the finite model solution, find the optimal policy for $\rho(\hat{\mu})$, say $\gamma(\cdot|\hat{x},\hat{\mu}^n)$.
\item The agents observe their state $x\in\mathds{X}$, find its discrete version $\hat{x}\in\hat{\mathds{X}}$, and find their control action via  $\gamma(\cdot|\hat{x},\hat{\mu}^n)$.
\end{itemize}

Before we present the result of this section, we introduce a new notation for the error term:
\begin{align*}
m_n=\sup_{\hat{\mu}\in\P(\hat{\mathds{X}})}W_1\left(\hat{\mu},\rho(\hat{\mu})\right)
\end{align*}
where $\rho$ is nearest neighbor map, that maps $\hat{\mu}$ to the closest empirical measure defined on $\hat{\mathds{X}}$. We note that on $\hat{\mathds{X}}$, we use the following metric: $\left|\hat{x}-\hat{y}\right|=0$ if $\hat{x}=\hat{y}$ and  $\left|\hat{x}-\hat{y}\right|=1$ if $\hat{x}\neq\hat{y}$. Hence, for some $\hat{\mu},\hat{\nu}\in \P(\hat{\mathds{X}})$:
\begin{align*}
W_1(\hat{\mu},\hat{\nu})=\sum_{\hat{x}\in\hat{\mathds{X}}} \left|\hat{\mu}(\hat{x})-\hat{\nu}(\hat{x})\right|
\end{align*}

We first present some useful lemmas:
\begin{lemma}\label{inf_agg}
Let Assumption \ref{main_assmp} hold with $2K_f\beta<1$. For any $\mu\in\P(\mathds{X})$, define $\hat{\mu}$ to be $\mu$'s projection on $\hat{\mathds{X}}$ and define $\hat{\mu}^n:=\rho(\hat{\mu})$. We then have
\begin{align*}
\left|\hat{K}_\beta^*(\hat{\mu}^n)-K_\beta^*(\mu)\right|\leq C\left(L_{\mathds{X}}+m_n\right)
\end{align*}
where $C$ is some constant that depends on the system parameters, and $L_{\mathds{X}}$ is the quantization error defined in (\ref{quant_error}).
\end{lemma}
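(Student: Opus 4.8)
The plan is to interpolate between the aggregated model of Section~\ref{finite_inf_pop} and the original continuous model $K_\beta^*$ through the (non-aggregated) infinite-population problem on $\hat{\mathds{X}}$ of Section~\ref{inf_on_fin}, whose value function I will temporarily write as $V(\hat\mu):=\hat{K}_\beta^*(\hat\mu)$ for $\hat\mu\in\P(\hat{\mathds{X}})$, to keep it distinct from the aggregated value $\hat{K}_\beta^*(\hat\mu^n)$ with $\hat\mu^n\in\P_n(\hat{\mathds{X}})$ (both are written $\hat{K}_\beta^*$ in the paper, the argument's domain disambiguating). Writing $\hat\mu(\hat x_i)=\mu(B_i)$ and $\hat\mu^n=\rho(\hat\mu)$, the triangle inequality gives
\[
\left|\hat{K}_\beta^*(\hat\mu^n)-K_\beta^*(\mu)\right|\leq \underbrace{\left|\hat{K}_\beta^*(\hat\mu^n)-V(\hat\mu)\right|}_{\text{aggregation}}+\underbrace{\left|V(\hat\mu)-K_\beta^*(\mu)\right|}_{\text{discretization}}.
\]
The discretization term is precisely the quantity controlled by Proposition~\ref{inf_disc}, hence at most $\frac{2K_c}{(1-\beta)(1-2\beta K_f)}L_{\mathds{X}}$, so only the aggregation term needs work.

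For the aggregation term I would first record that $V$ is $W_1$-Lipschitz. The stage cost $k$ and the (deterministic-in-$w^0$) flow $\hat F$ of the Section~\ref{inf_on_fin} model satisfy one-step bounds analogous to Lemma~\ref{cost_kernel_bound}, so the fixed point argument of Lemma~\ref{val_lip} (or \cite{hinderer2005lipschitz}) yields $\|V\|_{Lip}\leq\frac{2K_c}{1-2K_f\beta}$. Then, splitting at $\hat\mu^n=\rho(\hat\mu)$,
\[
\left|\hat{K}_\beta^*(\hat\mu^n)-V(\hat\mu)\right|\leq \left|\hat{K}_\beta^*(\hat\mu^n)-V(\hat\mu^n)\right|+\|V\|_{Lip}\,W_1(\rho(\hat\mu),\hat\mu),
\]
and the last factor is at most $m_n$ by definition of $m_n$.

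The core estimate is $D:=\sup_{\hat\mu^n\in\P_n(\hat{\mathds{X}})}\left|\hat{K}_\beta^*(\hat\mu^n)-V(\hat\mu^n)\right|$. I would compare the two Bellman equations at a common point $\hat\mu^n\in\P_n(\hat{\mathds{X}})$: both models use the identical stage cost $k(\hat\mu^n,\gamma)$ and the identical pre-projection next measure $\nu_{w^0}=\hat F(\hat\mu^n,\hat\Theta,w^0)$, the only discrepancy being that the aggregated model evaluates the continuation value at $\rho(\nu_{w^0})$ whereas the Section~\ref{inf_on_fin} model evaluates it at $\nu_{w^0}$. Using $|\inf_\gamma a(\gamma)-\inf_\gamma b(\gamma)|\leq\sup_\gamma|a(\gamma)-b(\gamma)|$ and then, for each $\nu$, $|\hat{K}_\beta^*(\rho(\nu))-V(\nu)|\leq|\hat{K}_\beta^*(\rho(\nu))-V(\rho(\nu))|+\|V\|_{Lip}W_1(\rho(\nu),\nu)\leq D+\|V\|_{Lip}m_n$ (the first term is bounded by $D$ since $\rho(\nu)\in\P_n(\hat{\mathds{X}})$), I obtain the self-referential inequality $D\leq\beta\bigl(D+\|V\|_{Lip}m_n\bigr)$. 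Solving gives $D\leq\frac{\beta\|V\|_{Lip}}{1-\beta}m_n$, and combining with the split above yields $\left|\hat{K}_\beta^*(\hat\mu^n)-V(\hat\mu)\right|\leq\frac{\|V\|_{Lip}}{1-\beta}m_n\leq\frac{2K_c}{(1-\beta)(1-2K_f\beta)}m_n$. Adding the two contributions gives the claim with $C=\frac{2K_c}{(1-\beta)(1-2\beta K_f)}$.

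The main obstacle I anticipate is bookkeeping rather than depth: keeping the three models (continuous $\mathds{X}$, finite $\hat{\mathds{X}}$ non-aggregated, and $n$-aggregated) and the overloaded symbol $\hat{K}_\beta^*$ straight, and verifying that the one-step and Lipschitz bounds genuinely transfer to the Section~\ref{inf_on_fin} model under the discrete metric on $\hat{\mathds{X}}$ (where $W_1(\hat\mu,\hat\nu)=\sum_{\hat x}|\hat\mu(\hat x)-\hat\nu(\hat x)|$), so that the self-referential inequality closes with the advertised constant. The $\inf$-difference step and the geometric contraction in $\beta$ are routine once those ingredients are in place.
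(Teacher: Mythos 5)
Your proposal is correct and follows essentially the same route as the paper's own proof: a triangle inequality through the value function of the Section~\ref{inf_on_fin} model, Proposition~\ref{inf_disc} for the discretization term, and a self-referential Bellman inequality for the aggregation term, closed using the (asserted, as in the paper) $W_1$-Lipschitz bound on that value function and a division by $1-\beta$. The only material difference is bookkeeping in your favor: by anchoring the Bellman comparison at the common point $\hat{\mu}^n$ so that the stage costs and pre-projection transitions cancel exactly, you avoid the $\|c\|_\infty$ and kernel-perturbation terms that appear in the paper's version of the self-referential inequality, and you obtain the explicit constant $C=\frac{2K_c}{(1-\beta)(1-2\beta K_f)}$.
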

\begin{proof}
The proof can be found in Appendix \ref{inf_agg_proof}.
\end{proof}

The following result establishes a Lipschitz bound for the value function of the infinite population problem defined on $\mathds{X}$.
\begin{lemma}\label{inf_lip}
Let Assumption \ref{main_assmp} hold with $2K_f\beta<1$. We then have that
\begin{align*}
\left|K_\beta^*(\mu)-K_\beta^*(\mu')\right|\leq \frac{2K_c}{1-2K_f\beta} W_1(\mu,\mu')
\end{align*}
for any $\mu,\mu'\in\P(\mathds{X})$.
\end{lemma}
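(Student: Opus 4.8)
The plan is to mirror the argument behind the finite-population Lipschitz bound of Lemma \ref{val_lip}. The two ingredients are (a) an infinite-population analogue of Lemma \ref{cost_kernel_bound}, giving simultaneous one-step Lipschitz bounds on the cost $k$ and on the transition kernel $\eta$ (the law of $\mu_1=F(\mu,\Theta,w^0)$ over the common noise), and (b) a value-iteration fixed-point argument that propagates these one-step bounds to the value function. Since $\mathds{X},\mathds{U}$ are compact, the cost is bounded and the value-iteration operator
\[
(Tv)(\mu)=\inf_{\Theta\in U(\mu)}\left[k(\mu,\Theta)+\beta\int v(\mu_1)\,\eta(d\mu_1|\mu,\Theta)\right]
\]
is a $\beta$-contraction whose iterates $v_{n+1}=Tv_n$, $v_0\equiv 0$, converge uniformly to $K_\beta^*$.

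For ingredient (a), I would fix $\mu,\mu'\in\P(\mathds{X})$ and an admissible action $\Theta=\gamma(du|x)\mu(dx)$ for $\mu$, let $\pi\in\P(\mathds{X}\times\mathds{X})$ be an optimal $W_1$-coupling of $\mu,\mu'$ (so $\int|x-x'|\,\pi(dx,dx')=W_1(\mu,\mu')$), and form the joint law $\Lambda(dx,dx',du)=\gamma(du|x)\,\pi(dx,dx')$. Its $(x,u)$-marginal is $\Theta$ and its $(x',u)$-marginal $\Theta'$ is admissible for $\mu'$. Keeping $u$ common in both integrands and using the Lipschitz property of $c$ in Assumption \ref{main_assmp},
\[
|k(\mu,\Theta)-k(\mu',\Theta')|
\le K_c\int\bigl(|x-x'|+W_1(\mu,\mu')\bigr)\Lambda(dx,dx',du)
=2K_cW_1(\mu,\mu').
\]
For the kernel I would couple the next states by sharing the common noise $w^0$ and, for each fixed $w^0$, the idiosyncratic noise $W$: drawing $(x,x',u)\sim\Lambda$ and $W$ independently couples $F(\mu,\Theta,w^0)$ and $F(\mu',\Theta',w^0)$, so by the Lipschitz property of $f$,
\[
W_1\bigl(F(\mu,\Theta,w^0),F(\mu',\Theta',w^0)\bigr)
\le K_f\int\bigl(|x-x'|+W_1(\mu,\mu')\bigr)\Lambda(dx,dx',du)
=2K_fW_1(\mu,\mu'),
\]
and taking expectation over $w^0$ yields $W_1(\eta(\cdot|\mu,\Theta),\eta(\cdot|\mu',\Theta'))\le 2K_fW_1(\mu,\mu')$.

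For ingredient (b), I would show by induction that $v_n$ is $L_n$-Lipschitz in $W_1$ with $L_0=0$ and $L_{n+1}=2K_c+2\beta K_f L_n$. Picking an $\epsilon$-optimal action $\Theta$ for $\mu$ in $v_{n+1}(\mu)$ and building the coupled action $\Theta'$ for $\mu'$ as above,
\[
v_{n+1}(\mu')-v_{n+1}(\mu)\le \bigl(k(\mu',\Theta')-k(\mu,\Theta)\bigr)+\beta\left(\int v_n\,d\eta(\cdot|\mu',\Theta')-\int v_n\,d\eta(\cdot|\mu,\Theta)\right);
\]
the first term is bounded by $2K_cW_1(\mu,\mu')$ and, since $v_n$ is $L_n$-Lipschitz, the second is bounded by $\beta L_n W_1(\eta(\cdot|\mu,\Theta),\eta(\cdot|\mu',\Theta'))\le 2\beta K_f L_n W_1(\mu,\mu')$. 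Symmetrizing in $\mu,\mu'$ gives the recursion. Because $2\beta K_f<1$, it has the increasing fixed point $L^\ast=2K_c/(1-2\beta K_f)$ with $L_n\uparrow L^\ast$, so each $v_n$ is $L^\ast$-Lipschitz and hence so is the uniform limit $K_\beta^*$, which is exactly the asserted bound.

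The main obstacle is ingredient (a): one must exhibit a single coupling that controls the cost difference and the kernel difference simultaneously. The device that makes this work is to build $\Theta'$ from the optimal $W_1$-transport plan $\pi$ via $\Lambda=\gamma(du|x)\pi(dx,dx')$, and then propagate the identical idiosyncratic and common noise through $f$, so that only the $x$- and $\mu$-arguments differ and both differences are absorbed into $W_1(\mu,\mu')$. Alternatively, once these one-step bounds are established, one may invoke Theorem 4.1(c) of \cite{hinderer2005lipschitz}, exactly as noted for Lemma \ref{val_lip}.
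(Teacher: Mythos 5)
Your proposal is correct, but it follows a genuinely different route from the paper. The paper's proof is an approximation argument: it picks empirical measures $\mu^n,\mu'^n$ converging to $\mu,\mu'$, applies the already-established finite-population Lipschitz bound (Lemma \ref{val_lip}) to $\left|K_\beta^{n,*}(\mu^n)-K_\beta^{n,*}(\mu'^n)\right|$, and then lets $n\to\infty$, invoking the convergence $K_\beta^{n,*}(\mu^n)\to K_\beta^*(\mu)$ of finite-population values to the mean-field limit, which is imported from the cited literature (\cite{motte2022quantitative,motte2022mean,bauerle2021mean}). Your argument instead works intrinsically in the infinite-population measure-valued MDP: you first prove a mean-field analogue of Lemma \ref{cost_kernel_bound} by replacing the paper's permutation/reordering device (which only makes sense for empirical measures) with an optimal $W_1$ transport plan $\pi$ and the disintegration $\Lambda(dx,dx',du)=\gamma(du|x)\pi(dx,dx')$, which simultaneously controls the cost difference and — by pushing the same idiosyncratic and common noise through $f$ — the kernel difference; you then propagate these one-step bounds through value iteration via the recursion $L_{n+1}=2K_c+2\beta K_f L_n$, whose limit under $2\beta K_f<1$ is exactly $2K_c/(1-2K_f\beta)$. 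What the paper's route buys is brevity: two lines given Lemma \ref{val_lip} and the known mean-field convergence theorems. What your route buys is self-containedness and by-products: it needs neither the finite-$N$ results nor the external limit theorems (nor the implicit fact that $W_1(\mu^n,\mu'^n)\to W_1(\mu,\mu')$ along the chosen empirical approximations), and the one-step Lipschitz estimates for $k$ and $\eta$ on $\P(\mathds{X})$ that you establish are of independent use — indeed they are exactly what is needed to invoke Theorem 4.1(c) of \cite{hinderer2005lipschitz} directly, as you note.
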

\begin{proof}
Let $\mu^n$ and $\mu^{n'}$, empirical measures coming from the measures $\mu$ and $\mu'$. We write
\begin{align*}
&\left|K_\beta^*(\mu)-K_\beta^*(\mu')\right|\\
&\leq \left|K_\beta^*(\mu)- K_\beta^{n,*}(\mu^n)\right| + \left| K_\beta^{n,*}(\mu^n) -  K_\beta^{n,*}(\mu^{n'})\right| + \left| K_\beta^{n,*}(\mu^{n'}) - K_\beta^*(\mu')\right|\\
&\leq  \left|K_\beta^*(\mu)- K_\beta^{n,*}(\mu^n)\right| + \frac{2K_c}{1-2K_f\beta}W_1(\mu^n,\mu^{n'})+ \left| K_\beta^{n,*}(\mu^{n'}) - K_\beta^*(\mu')\right|
\end{align*}
where we have used Lemma \ref{val_lip} to bound the second term. Taking the limit $n\to\infty$ concludes the proof.
\end{proof}

We now present the main result of this section that gives an upper bound for the regret or the error due the application of the sub-optimal policy provided by the algorithm.
\begin{theorem}
Let Assumption \ref{main_assmp} hold, and let $\hat{\gamma}^n$ denote the agent level policy given by the algorithm presented in this section. Application of this algorithm in the original infinite population problem results is in the following regret:
\begin{align*}
K_\beta(\mu_0,\hat{\gamma}^n)-K_\beta^*(\mu_0)\leq C\left(L_{\mathds{X}}+m_n\right)
\end{align*}
where $C$ is some constant that depends on the system parameters, and $L_{\mathds{X}}$ is the quantization error defined in (\ref{quant_error}).
\end{theorem}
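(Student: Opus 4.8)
The plan is to reproduce the Bellman fixed-point argument used in the proof of Theorem \ref{main_thm2}, replacing the two ingredients that entered there — Proposition \ref{key_lem} (model-approximation error) and Lemma \ref{val_lip} (Lipschitz continuity of the value) — by their infinite-population analogues, Lemma \ref{inf_agg} and Lemma \ref{inf_lip}. The only genuinely new feature is that the applied policy is frozen at the \emph{aggregated, discretized} measure $\rho(\hat{\mu})$ rather than at the discretized $\mu$, so the aggregation error $m_n$ must be carried through every estimate alongside the quantization error $L_{\mathds{X}}$.

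Writing $\hat{\mu}$ for the projection of $\mu$ onto $\hat{\mathds{X}}$ and $\hat{\mu}^n=\rho(\hat{\mu})$, applying $\hat{\gamma}^n$ at $\mu$ induces the state-action measure $\Theta_\mu(dx,du)=\gamma(du|\phi(x),\hat{\mu}^n)\mu(dx)$, so that $K_\beta(\cdot,\hat{\gamma}^n)$ solves
\begin{align*}
K_\beta(\mu,\hat{\gamma}^n)=k(\mu,\Theta_\mu)+\beta\int K_\beta(\mu_1,\hat{\gamma}^n)\,\eta(d\mu_1|\mu,\Theta_\mu),
\end{align*}
while the aggregated optimal value satisfies the corresponding equation at $\hat{\mu}^n$ with cost $\hat{k}$ and kernel (\ref{trans_agg}). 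I would study $D:=\sup_{\mu}\left|K_\beta(\mu,\hat{\gamma}^n)-K_\beta^*(\mu)\right|$ and, exactly as in (\ref{first_bound}), bound it by $\left|K_\beta(\mu,\hat{\gamma}^n)-\hat{K}_\beta^*(\hat{\mu}^n)\right|$ plus the model error $\left|\hat{K}_\beta^*(\hat{\mu}^n)-K_\beta^*(\mu)\right|$, the latter being at most $C(L_{\mathds{X}}+m_n)$ by Lemma \ref{inf_agg}.

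For the first piece I would subtract the two Bellman equations and split into a stage-cost difference and a continuation difference. The stage-cost difference $|k(\mu,\Theta_\mu)-\hat{k}(\hat{\mu}^n,\gamma)|$ is handled by the Lipschitz hypothesis on $c$ together with $W_1(\mu,\hat{\mu}^n)\leq L_{\mathds{X}}+m_n$ (the projection costs at most $L_{\mathds{X}}$ as in Lemma \ref{quant_lem}, the aggregation at most $m_n$), yielding a term of order $K_c(L_{\mathds{X}}+m_n)$. The continuation difference I would break, as in Theorem \ref{main_thm2}, into (i) a term integrating $K_\beta^*$ against the two kernels, bounded by $\|K_\beta^*\|_{Lip}$ (Lemma \ref{inf_lip}) times the $W_1$-distance between them, which by the transition-Lipschitz estimate of Lemma \ref{cost_kernel_bound} is $O\big(K_f(L_{\mathds{X}}+m_n)\big)$; (ii) a model term controlled again by Lemma \ref{inf_agg}; and (iii) the self-referential contribution $\beta D$. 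Collecting everything gives $D\leq C_1(L_{\mathds{X}}+m_n)+\beta D$, and since $2K_f\beta<1$ all constants are finite, so solving $D(1-\beta)\leq C_1(L_{\mathds{X}}+m_n)$ yields $D\leq C(L_{\mathds{X}}+m_n)$; evaluating at $\mu_0$ and using $K_\beta(\mu_0,\hat{\gamma}^n)-K_\beta^*(\mu_0)\leq D$ then finishes the proof.

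The crux is the kernel comparison in step (i): the aggregated kernel (\ref{trans_agg}) is the law of $\rho$ applied to the one-step infinite-population flow, whereas the evaluation runs under the exact $\eta(\cdot|\mu,\Theta_\mu)$, so I must simultaneously absorb the state-discretization error, the aggregation error — which is measured in the discrete metric on $\hat{\mathds{X}}$ and therefore must be converted carefully into a $W_1$ bound on $\mathds{X}$ — and the frozen-feedback mismatch arising because $\hat{\gamma}^n$ is optimal for $\hat{\mu}^n$ yet is evaluated at the true $\mu$. Verifying that these three effects each contribute only $O(L_{\mathds{X}}+m_n)$, rather than compounding, is the delicate accounting that makes the recursion close.
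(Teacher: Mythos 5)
Your proposal follows essentially the same route as the paper's own proof: the same initial decomposition via adding and subtracting $\hat{K}_\beta^*(\hat{\mu}_0^n)$ with Lemma \ref{inf_agg} bounding the model error, the same Bellman-equation comparison splitting into a stage-cost term, a model term, a kernel-comparison term controlled by $\|K_\beta^*\|_{Lip}$ via Lemma \ref{inf_lip}, and the self-referential $\beta D$ term, closed by solving the resulting inequality after dividing by $(1-\beta)$. The only minor quibble is that for the kernel $W_1$ estimate you cite Lemma \ref{cost_kernel_bound}, which is stated for finite-population empirical measures; the paper instead proves the needed bound directly from the Lipschitz assumption on $\mathcal{T}^{w^0}$ (conditioning on the common noise and testing against Lipschitz functions), which is the cleaner way to justify that step in the infinite-population setting.
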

\begin{proof}
We start by writing
\begin{align}\label{init_term}
K_\beta(\mu_0,\hat{\gamma}^n)-K_\beta^*(\mu_0)&= K_\beta(\mu_0,\hat{\gamma}^n)-K_\beta^*(\mu_0)\pm \hat{K}_\beta^*(\hat{\mu}_0^n)\nonumber\\
&\leq \left| K_\beta(\mu_0,\hat{\gamma}^n) -  \hat{K}_\beta^*(\hat{\mu}_0^n)\right|+ \left| \hat{K}_\beta^*(\hat{\mu}_0^n) - K_\beta^*(\mu_0)\right|
\end{align}
the second term is bounded by Lemma \ref{inf_agg}. 

In what follows we will make use of the quantization map $\phi$ (see (\ref{quant_map})), which maps any $x\in\mathds{X}$ to the representative state of the quantization bin it belongs to, say some $\hat{x}_i\in\hat{\mathds{X}}$.
For the first term, we write
\begin{align}\label{mid_term}
&\left| K_\beta(\mu_0,\hat{\gamma}^n) -  \hat{K}_\beta^*(\hat{\mu}_0^n)\right|\nonumber\\
&\leq  \int c(x,u,\mu_0)\hat{\gamma}^n(du|\phi(x),\hat{\mu}_0^n)\mu_0(dx) - \int c(\hat{x},u,\hat{\mu}_0^n)\hat{\gamma}^n(du|\hat{x},\hat{\mu}_0^n)\hat{\mu}_0^n(d\hat{x})\nonumber\\
&\qquad\pm  \int c(\phi(x),u,\hat{\mu}^n_0)\hat{\gamma}^n(du|\phi(x),\hat{\mu}_0^n)\mu_0(dx)\nonumber\\
& + \beta \int K_\beta(\mu_1,\hat{\gamma}^n)Pr(d\mu_1|\mu_0,\hat{\gamma}^n) - \beta \int \hat{K}_\beta^*(\rho(\hat{\mu}_1))Pr(d\mu_1|\hat{\mu}_0^n,\hat{\gamma}^n)\nonumber\\
&\qquad\pm  \beta \int K_\beta^*(\mu_1)Pr(d\mu_1|\mu_0,\hat{\gamma}^n)\pm  \beta \int K_\beta^*(\mu_1)Pr(d\mu_1|\hat{\mu}^n_0,\hat{\gamma}^n)\nonumber\\
&\leq C\left(L_{\mathds{X}}+ m_n\right) + \beta \sup_\mu \left|\hat{K}_\beta^*(\hat{\mu}^n)-K_\beta^*(\mu)\right| + \beta \sup_\mu \left|K_\beta(\mu,\hat{\gamma}^n)-K_\beta^*(\mu)\right|\nonumber\\
&\qquad+  \left| \beta \int K_\beta^*(\mu_1)Pr(d\mu_1|\mu_0,\hat{\gamma}^n) -  \beta \int K_\beta^*(\mu_1)Pr(d\mu_1|\hat{\mu}^n_0,\hat{\gamma}^n)\right|
\end{align}
We now analyze the last term in detail. For a given realization of common noise $w^0$, and a function $f$ such that $\|f\|_{Lip},\|f\|_\infty\leq 1$:
\begin{align*}
&\bigg|\int f(x_1)\mathcal{T}^{w^0}(dx_1|x,u,\mu_0)\hat{\gamma}^n(du|\hat{x},\hat{\mu}_0^n)\mu_0(dx)-\int f(x_1)\mathcal{T}^{w^0}(dx_1|x,u,\hat{\mu}^n_0)\hat{\gamma}^n(du|\hat{x},\hat{\mu}_0^n)\hat{\mu}^n_0(dx)\bigg|\\
&\leq \bigg|\int f(x_1)\mathcal{T}^{w^0}(dx_1|x,u,\mu_0)\hat{\gamma}^n(du|\hat{x},\hat{\mu}_0^n)\mu_0(dx) - \int f(x_1)\mathcal{T}^{w^0}(dx_1|\phi(x),u,\hat{\mu}^n_0)\hat{\gamma}^n(du|\hat{x},\hat{\mu}_0^n)\mu_0(dx)\bigg|\\
&+ \bigg| \int f(x_1)\mathcal{T}^{w^0}(dx_1|\phi(x),u,\hat{\mu}^n_0)\hat{\gamma}^n(du|\hat{x},\hat{\mu}_0^n)\mu_0(dx) - \int f(x_1)\mathcal{T}^{w^0}(dx_1|x,u,\hat{\mu}^n_0)\hat{\gamma}^n(du|\hat{x},\hat{\mu}_0^n)\hat{\mu}^n_0(dx)\bigg|\\
& \leq K_f \left(L_{\mathds{X}} + m_n\right)+ W_1(\hat{\mu}_0,\hat{\mu}_0^n)\leq C \left(L_{\mathds{X}} +m_n\right)
\end{align*}
for some constant $C<\infty$. Hence, we have that 
\begin{align*}
 \left| \beta \int K_\beta^*(\mu_1)Pr(d\mu_1|\mu_0,\hat{\gamma}^n) -  \beta \int K_\beta^*(\mu_1)Pr(d\mu_1|\hat{\mu}^n_0,\hat{\gamma}^n)\right|\leq \|K_\beta^*\|_{Lip} C \left(L_{\mathds{X}} +m_n\right)
\end{align*}
Furthermore, $\|K_\beta^*\|_{Lip}$ is also bounded with Lemma \ref{inf_lip}.  Hence, combining (\ref{init_term}) and (\ref{mid_term}), we can conclude the proof with Lemma \ref{inf_agg}.
\end{proof}

\subsection{Finite Model Construction via Population Sampling}\label{finite_inf_pop2}
In this section, we present an approximation method for large populations, by considering the possible distribution of $n$ agents among the infinite population on the discrete space $\hat{\mathds{X}}$.

For the cost function, we use the same one as in the previous section:
\begin{align*}
\hat{k}(\hat{\mu}^n,\gamma)&=\int_{x\in \hat{\mathds{X}}}\int_{u\in\mathds{U}} c(x,u,\hat{\mu}^n)\gamma(du|x,\hat{\mu}^n)\hat{\mu}^n(dx)\\
&=\sum_{\hat{x}_j,\hat{u}_i}c(\hat{x}_j,\hat{u}_i,\hat{\mu}^n)\gamma(\hat{u}_i|\hat{x}_j,\hat{\mu}^n)\hat{\mu}^n(\hat{x}_j)
\end{align*}
note that $\hat{\mathds{X}}$ and  $\mathds{U}$ are finite sets, however, we still sometimes use integration sign instead of summation for notation consistency.

For the transition function:
\begin{align}\label{trans_samp}
Pr(\hat{\mu}^n_i|\hat{\mu}^n_j,\gamma)=\int Pr(\hat{\mu}^n_i|\hat{\mu}^n_j,\gamma,w^0)Pr(dw^0)
\end{align}
where
\begin{align*}
 Pr(\hat{\mu}^n_i|\hat{\mu}^n_j,\gamma,w^0)=Pr\left({\bf \hat{x}}^n: \mu_{\bf \hat{x}}^n=\hat{\mu}_i^n \text{ s.t. }, {\bf \hat{x}}^n\sim  \mathcal{T}^{w_0}(\cdot|x,u,\hat{\mu}^n_j)\gamma(du|x,\hat{\mu}^n_j))\hat{\mu}^n_j(dx)  \right)
\end{align*}
in words, we consider the $\hat{\mu}_j^n$ as a usual probability measure on $\mathds{X}$, which is simply a combination of Dirac measures on the discrete points $\hat{x}_i$. We then look at the possible incoming measures under different common noise realizations, and we look at the probability of an empirical distribution $\hat{\mu}_i^n$ to be produced by this measure.

We denote the optimal cost for this model by $\hat{K}_\beta^*(\hat{\mu}^n)$ by overriding the notation.

\subsubsection{Approximate Control by Observing the Mean-field Term via Population Sampling}

We summarize the construction we will use in this section as follows:
\begin{itemize}
\item Construct and solve the finite model presented in Section \ref{finite_inf_pop2}. This will provide an agent level policy $\gamma(du|\hat{x},\hat{\mu}^n)$, for every possible $\hat{\mu}^n$, and every $\hat{x}\in\mathds{X}$.
\item To apply this policy on the original model, pick $n$ agents, and look at their distributions on $\hat{\mathds{X}}$, say $\hat{\mu}^n$
\item The agents observe their state $x\in\mathds{X}$, find its discrete version $\hat{x}\in\hat{\mathds{X}}$, and find their control action via  $\gamma(\cdot|\hat{x},\hat{\mu}^n)$.
\end{itemize}

Before we present the result of this section, we introduce a new notation for the error term:
\begin{align*}
M_n=\sup_{\hat{\mu}\in\P(\hat{\mathds{X}})}E_{\hat{\mu}}\left[W_1\left(\hat{\mu},\hat{\mu}^n\right)\right]
\end{align*}
which is the expected distance of possible empirical measures produced by $\hat{\mu}$, to $\hat{\mu}$ itself.

We need the following lemma for the main result:
\begin{lemma}\label{pop_samp_lem}
Let Assumption \ref{main_assmp} hold with $2K_f\beta<1$. We then have that for any $\mu\in\P(\mathds{X})$,
\begin{align*}
E\left[\left|\hat{K}_\beta^*(\hat{\mu}^n)- K_\beta^*(\mu)\right|\right]\leq C \left(L_\mathds{X} + M_n\right)
\end{align*}
where the expectation is with respect to the possible realizations of the empirical distributions on $\hat{\mathds{X}}$, $\hat{\mu}^n$, coming from the original measure $\mu$.
\end{lemma}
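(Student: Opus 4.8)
The plan is to adapt the argument of Lemma~\ref{inf_agg} to the sampling setting, replacing the deterministic nearest-neighbor map $\rho$ by the random empirical measure $\hat{\mu}^n$ and carrying the expectation through every step. First I would introduce the finite-population problem with $n$ agents on the discrete space $\hat{\mathds{X}}$, whose optimal value function $K_\beta^{n,*}$ is already covered by the earlier finite-population analysis, and use the triangle-inequality decomposition
\begin{align*}
E\left[\left|\hat{K}_\beta^*(\hat{\mu}^n)-K_\beta^*(\mu)\right|\right]
&\leq E\left[\left|\hat{K}_\beta^*(\hat{\mu}^n)-K_\beta^{n,*}(\hat{\mu}^n)\right|\right]
+E\left[\left|K_\beta^{n,*}(\hat{\mu}^n)-K_\beta^{*}(\mu)\right|\right].
\end{align*}
The second term is the easier one: since $\hat{\mu}^n$ is an empirical measure on the discretized space arising from $\mu$, I would first pass from $K_\beta^{n,*}(\hat{\mu}^n)$ to $K_\beta^{*}(\mu^n)$ (the continuous-space value on a matching empirical measure) at the cost of the $O(L_\mathds{X})$ discretization error from Proposition~\ref{inf_disc}/Proposition~\ref{key_lem}, and then apply the Lipschitz bound of Lemma~\ref{inf_lip} together with $E[W_1(\mu,\mu^n)]\leq M_n$ to control $E\left[\left|K_\beta^{*}(\mu^n)-K_\beta^{*}(\mu)\right|\right]$.

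The first term is where the sampling-specific work lives, and I expect it to be the main obstacle. The two models $\hat{K}_\beta^*$ and $K_\beta^{n,*}$ share the same state space $\P_n(\hat{\mathds{X}})$ and the same cost, but differ in their transition laws: $K_\beta^{n,*}$ uses the genuine $n$-agent empirical dynamics, whereas $\hat{K}_\beta^*$ (defined in \eqref{trans_samp}) pushes the \emph{infinite-population} marginal $\mathcal{T}^{w^0}(\cdot|x,u,\hat{\mu}^n)\gamma(du|x,\hat{\mu}^n)\hat{\mu}^n(dx)$ forward and then resamples $n$ agents from it. The plan is to write both value functions through their Bellman fixed-point equations and bound the difference of the one-step operators. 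The costs coincide, so only the transition kernels enter; the discrepancy between "evolve $n$ agents, then look at their empirical measure" and "evolve the population marginal, then sample $n$ agents from it" must be shown to be $O(M_n)$ in $W_1$, using that the mean-field term fed into $\mathcal{T}$ is in both cases $\hat{\mu}^n$ and invoking Lemma~\ref{cost_kernel_bound}-type Lipschitz estimates together with the Lipschitz bound on $K_\beta^*$ from Lemma~\ref{inf_lip}. A standard contraction argument then yields
\begin{align*}
\sup_{\hat{\mu}^n}\left|\hat{K}_\beta^*(\hat{\mu}^n)-K_\beta^{n,*}(\hat{\mu}^n)\right|
\leq \frac{C}{1-\beta}\left(L_\mathds{X}+M_n\right),
\end{align*}
and combining this deterministic sup-bound with the expectation bound on the second term completes the proof.

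The delicate point I would watch most carefully is making the law-of-large-numbers intuition rigorous inside the dynamic program: the empirical measure produced by $n$ samples from the evolved marginal must be compared to the empirical measure produced by evolving $n$ already-sampled agents, and these are genuinely different random objects even though their expected $W_1$-gap is governed by $M_n$. I would control this via a coupling of the two sampling mechanisms and the definition of $M_n$ as $\sup_{\hat{\mu}}E_{\hat{\mu}}[W_1(\hat{\mu},\hat{\mu}^n)]$, so that each one-step sampling error contributes at most a constant multiple of $M_n$, and the geometric discounting sums these errors into the stated bound $C(L_\mathds{X}+M_n)$.
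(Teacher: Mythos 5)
Your route differs from the paper's in one crucial respect: you insert the \emph{genuine} $n$-agent team problem $K_\beta^{n,*}$ on $\hat{\mathds{X}}$ as the intermediate object, and this creates a gap that the tools in the paper cannot close. In your second term, passing from $K_\beta^{n,*}(\hat{\mu}^n)$ to $K_\beta^{*}(\mu^n)$ is \emph{not} an $O(L_{\mathds{X}})$ discretization step: Proposition \ref{key_lem} compares the discrete- and continuous-space problems at the \emph{same} population size, and Proposition \ref{inf_disc} compares the two infinite-population problems; neither bounds $|K_\beta^{n,*}(\mu^n)-K_\beta^{*}(\mu^n)|$ for a fixed finite $n$. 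That quantity is a quantitative propagation-of-chaos estimate for controlled dynamics, and in the paper it appears only qualitatively (the terms that ``converge to $0$'' in the proof of Proposition \ref{inf_disc}, where one may take $N\to\infty$ because the remaining bound is $N$-independent). Since here $n$ is precisely the parameter you want to keep small, you cannot send it to infinity, and no result in the paper bounds this gap by $C(L_{\mathds{X}}+M_n)$. Your first term also needs more than the definition of $M_n$: under the true $n$-agent dynamics the next states are independent but \emph{not} identically distributed (agent $i$ draws from $\mathcal{T}^{w^0}(\cdot|x^i,u^i,\hat{\mu}^n)$), whereas $M_n$ quantifies i.i.d.\ sampling from a single fixed measure; moreover the $n$-agent problem admits asymmetric actions $\Theta^n\in\P_n(\hat{\mathds{X}}\times\mathds{U})$, so the two models you compare do not even share the same action sets.

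The paper's proof avoids all of this by never invoking an $n$-agent control problem. It decomposes through the infinite-population model on the discretized space, $E[|\hat{K}_\beta^*(\hat{\mu}^n)-K_\beta^*(\mu)|]\leq E[|\hat{K}_\beta^*(\hat{\mu}^n)-\hat{K}_\beta^*(\hat{\mu})|]+|\hat{K}_\beta^*(\hat{\mu})-K_\beta^*(\mu)|$, with $\hat{\mu}$ the projection of $\mu$ onto $\hat{\mathds{X}}$; the second term is exactly Proposition \ref{inf_disc}. For the first term it writes both Bellman equations under a common agent-level policy $\gamma$ and uses the defining feature of the kernel (\ref{trans_samp}): given the common noise, $Pr(d\hat{\mu}^n_1|\hat{\mu}^n,\gamma)$ samples $n$ points \emph{exactly} from the evolved infinite-population measure $\hat{\mu}_1$. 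Consequently the one-step kernel discrepancy is not something to be estimated by a coupling; it \emph{is} the quantity being bounded, evaluated one step ahead, which yields the self-referential inequality $\sup_{\hat{\mu}}E[|\hat{K}_\beta^*(\hat{\mu}^n)-\hat{K}_\beta^*(\hat{\mu})|]\leq\left(\|c\|_\infty+\beta\|\hat{K}_\beta^*\|_{Lip}\right)M_n+\beta\sup_{\hat{\mu}}E[|\hat{K}_\beta^*(\hat{\mu}^n)-\hat{K}_\beta^*(\hat{\mu})|]$ and hence the bound $\frac{\|c\|_\infty+\beta\|\hat{K}_\beta^*\|_{Lip}}{1-\beta}M_n$. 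If you wish to keep your decomposition, you would first have to prove a quantitative mean-field limit for $K_\beta^{n,*}$, which is substantially more than the lemma requires.
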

\begin{proof}
The proof can be found in Appendix \ref{pop_samp_lem_proof}.
\end{proof}

The following result provides an upper bound for the regret or the error due the application of the sub-optimal policy provided by this algorithm.
\begin{theorem}
Let Assumption \ref{main_assmp} hold with $2K_f\beta<1$. Furthermore, let $\hat{\gamma}^n$ denote the agent level policy given by the algorithm presented in this section. Application of this algorithm in the original infinite population problem results is in the following regret:
\begin{align*}
K_\beta(\mu,\hat{\gamma}^n)-K_\beta^*(\mu)\leq C\left(L_{\mathds{X}}+M_n\right)
\end{align*}
where $C$ is some constant that depends on the system parameters, and $L_{\mathds{X}}$ is the quantization error defined in (\ref{quant_error}).
\end{theorem}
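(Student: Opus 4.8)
The plan is to mirror the structure of the proof of the preceding theorem for the direct aggregation method, replacing the deterministic projection error $m_n$ by the expected sampling error $M_n$ and carrying the entire argument in expectation over the random empirical measure $\hat{\mu}^n$ obtained by sampling $n$ agents. First I would introduce the intermediate quantity $\hat{K}_\beta^*(\hat{\mu}_0^n)$ and split
\begin{align*}
K_\beta(\mu_0,\hat{\gamma}^n)-K_\beta^*(\mu_0)\leq \left| K_\beta(\mu_0,\hat{\gamma}^n) -  \hat{K}_\beta^*(\hat{\mu}_0^n)\right|+ \left| \hat{K}_\beta^*(\hat{\mu}_0^n) - K_\beta^*(\mu_0)\right|,
\end{align*}
where $\hat{\mu}_0^n$ is the empirical distribution of the $n$ sampled agents projected onto $\hat{\mathds{X}}$. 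Since $\hat{\mu}_0^n$ is random, I would take expectations over the sampling throughout; the second term is then controlled directly by Lemma \ref{pop_samp_lem}, yielding the bound $C(L_{\mathds{X}}+M_n)$.

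For the first term I would write out the fixed-point equation satisfied by $\hat{K}_\beta^*$ under the sampling transition (\ref{trans_samp}) and the Bellman equation for the induced cost $K_\beta(\mu_0,\hat{\gamma}^n)$ in the original infinite-population model, and compare them term by term exactly as in the aggregation proof. The stage-cost contribution is handled by inserting the interpolating cost $c(\phi(x),u,\hat{\mu}_0^n)$ (with $\phi$ the quantization map (\ref{quant_map})) and invoking the Lipschitz property of $c$ from Assumption \ref{main_assmp} together with the quantization bound $L_{\mathds{X}}$ and the sampling discrepancy $W_1(\hat{\mu}_0,\hat{\mu}_0^n)$; in expectation this collapses to $C(L_{\mathds{X}}+M_n)$. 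The discounted continuation term splits, via repeated $\pm K_\beta^*$ insertions, into (i) $\beta\sup_\mu E[|\hat{K}_\beta^*(\hat{\mu}^n)-K_\beta^*(\mu)|]$, again bounded by Lemma \ref{pop_samp_lem}, (ii) the recursive term $\beta\sup_\mu E[|K_\beta(\mu,\hat{\gamma}^n)-K_\beta^*(\mu)|]$, and (iii) a transition-kernel discrepancy term that I would bound using the Lipschitz constant $\|K_\beta^*\|_{Lip}$ supplied by Lemma \ref{inf_lip}.

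The heart of the argument, and the step that differs most from the aggregation case, is the bound on the transition-kernel discrepancy. For a test function $f$ with $\|f\|_{Lip},\|f\|_\infty\leq 1$, I would compare integration of $f$ against the true continuous-space kernel started from $\mu_0$ with integration against the sampling kernel started from $\hat{\mu}_0^n$. This decomposes into a quantization error of order $K_f L_{\mathds{X}}$ (interpolating through $\phi(x)$ and using Lipschitzness of $f$ in the kernel argument and Assumption \ref{main_assmp}), a current-measure discrepancy $W_1(\hat{\mu}_0,\hat{\mu}_0^n)$, and crucially a next-step sampling error: after forming the exact pushforward under a common-noise realization, the sampling transition (\ref{trans_samp}) replaces it by an empirical measure of $n$ draws, and the expected $W_1$ distance between these two is precisely what $M_n$ bounds. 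Taking expectations over the common noise and the sampling simultaneously, all three contributions reduce to $C(L_{\mathds{X}}+M_n)$.

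Collecting the stage-cost and continuation bounds and taking the supremum over $\mu$ in expectation gives an inequality of the form
\begin{align*}
(1-\beta)\sup_\mu E\left[\left|K_\beta(\mu,\hat{\gamma}^n)-K_\beta^*(\mu)\right|\right]\leq C(L_{\mathds{X}}+M_n),
\end{align*}
after which dividing by $(1-\beta)$ and absorbing constants yields the claimed regret bound. The main obstacle I anticipate is the bookkeeping of the two independent sources of randomness — the common noise $w^0$ and the $n$-agent sampling — within the Bellman recursion, and verifying that the sampling error enters linearly through $M_n$ and $\|K_\beta^*\|_{Lip}$ rather than compounding across time steps; the geometric discounting by $\beta$ together with the contraction hypothesis $2K_f\beta<1$ is precisely what prevents such compounding and makes the recursion close.
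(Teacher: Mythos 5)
Your proposal is correct and follows essentially the same route as the paper's proof: the same initial splitting through $E[\hat{K}_\beta^*(\hat{\mu}^n)]$ with Lemma \ref{pop_samp_lem}, the same Bellman comparison with $\pm K_\beta^*$ insertions yielding the recursive term and the kernel discrepancy, and the same test-function argument bounded via $\|K_\beta^*\|_{Lip}$ from Lemma \ref{inf_lip}, closed by the $(1-\beta)$ contraction. The only difference is minor bookkeeping (you place the next-step sampling error inside the kernel-discrepancy term, while the paper absorbs it into the $\beta \sup_\mu E[|\hat{K}_\beta^*(\hat{\mu}^n)-K_\beta^*(\mu)|]$ term via Lemma \ref{pop_samp_lem}), which does not change the argument.
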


\begin{remark}
Note that the error term $M_n$ is closely related to the empirical consistency under Wasserstein distance, and one can find convergence rates in terms of $n$ for the measures defined on finite spaces in the literature.
\end{remark}

\begin{proof}
As before, we start by writing
\begin{align}\label{init_term2}
K_\beta(\mu,\hat{\gamma}^n)-K_\beta^*(\mu)&= K_\beta(\mu,\hat{\gamma}^n)-K_\beta^*(\mu)\pm E\left[\hat{K}_\beta^*(\hat{\mu}^n)\right]\nonumber\\
&\leq E\left[ K_\beta(\mu,\hat{\gamma}^n) -  \hat{K}_\beta^*(\hat{\mu}^n)\right]+ E\left[ \hat{K}_\beta^*(\hat{\mu}^n) - K_\beta^*(\mu)\right]
\end{align}
the second term is bounded by Lemma \ref{pop_samp_lem}.

Note that the application of of $\hat{\gamma}^n$ can be seen as a randomized policy for the original model, i.e., given the mean-field term $\mu$, an action, say $\hat{\gamma}^n(\cdot|\phi(x),\hat{\mu}^n)$ is chosen, with probability of observing $\hat{\mu}^n$ as the empirical measure under $\mu$. In what follows, we will use $Pr(\hat{\mu}^n|\mu)$ to denote this probability. 

For $K_\beta(\mu_0,\hat{\gamma}^n)$, we then have the following Bellman equation
\begin{align*}
K_\beta(\mu_0,\hat{\gamma}^n)=&\int c(x,u,\mu)\hat{\gamma}^n(du|\hat{x},\hat{\mu}^n)Pr(\hat{\mu}^n|\mu)\mu(dx)\\
&+\beta \int K_\beta(\mu_1,\hat{\gamma}^n)Pr(d\mu_1|\mu,\hat{\gamma}^n)
\end{align*}
where 
\begin{align*}
Pr(d\mu_1|\mu,\hat{\gamma}^n)&=\int \mathds{1}_{\{\mathcal{T}^{w^0}(\cdot|x,u,\mu)\hat{\gamma}^n(du|\hat{x},\hat{\mu}^n)\mu(dx)=\mu_1\}}Pr(\hat{\mu}^n|\mu)P(dw^0)\\
&=:Pr(d\mu_1|\mu,\hat{\gamma}(\hat{\mu}^n))P(\hat{\mu}^n|\mu)
\end{align*}

Thus, we can write the following:
\begin{align}\label{mid_term2}
&E\left[ K_\beta(\mu,\hat{\gamma}^n) -  \hat{K}_\beta^*(\hat{\mu}^n)\right]\nonumber\\
&\leq \int c(x,u,\mu)\hat{\gamma}^n(du|\hat{x},\hat{\mu}^n)\mu(dx)Pr(\hat{\mu}^n|\mu)- \int c(\hat{x},u,\hat{\mu}^n)\hat{\gamma}^n(du|\hat{x},\hat{\mu}^n)\hat{\mu}^n(d\hat{x})Pr(\hat{\mu}^n|\mu)\nonumber\\
&\qquad\pm  \int c(\hat{x},u,\hat{\mu}^n)\hat{\gamma}^n(du|\hat{x},\hat{\mu}^n)\mu(dx)Pr(\hat{\mu}^n|\mu)\nonumber\\
& + \beta \int K_\beta(\mu_1,\hat{\gamma}^n)Pr(d\mu_1|\mu,\hat{\gamma}^n(\hat{\mu}^n))Pr(\hat{\mu}^n|\mu) - \beta \int \hat{K}_\beta^*(\hat{\mu}^n_1)Pr(d\hat{\mu}^n_1|\hat{\mu}^n,\hat{\gamma}^n(\hat{\mu}^n))Pr(\hat{\mu}^n|\mu)\nonumber\\
&\qquad\pm  \beta \int K_\beta^*(\mu_1)Pr(d\mu_1|\mu,\hat{\gamma}^n(\hat{\mu}^n))Pr(\hat{\mu}^n|\mu)\pm  \beta \int K_\beta^*(\mu_1)Pr(d\mu_1|\hat{\mu}^n,\hat{\gamma}^n(\hat{\mu}^n))Pr(\hat{\mu}^n|\mu)\nonumber\\
&\leq C\left(L_{\mathds{X}}+ M_n\right) + \beta \sup_\mu E\left[\left|\hat{K}_\beta^*(\hat{\mu}^n)-K_\beta^*(\mu)\right|\right] + \beta \sup_\mu \left|K_\beta(\mu,\hat{\gamma}^n)-K_\beta^*(\mu)\right|\nonumber\\
&\qquad+  \left| \beta \int K_\beta^*(\mu_1)Pr(d\mu_1|\mu,\hat{\gamma}^n)Pr(\hat{\mu}^n|\mu) -  \beta \int K_\beta^*(\mu_1)Pr(d\mu_1|\hat{\mu}^n,\hat{\gamma}^n)Pr(\hat{\mu}^n|\mu)\right|
\end{align}
We now analyze the last term in detail. For a given realization of common noise $w^0$, and a function $f$ such that $\|f\|_{Lip},\|f\|_\infty\leq 1$:
\begin{align*}
&\bigg|\int f(x_1)\mathcal{T}^{w^0}(dx_1|x,u,\mu)\hat{\gamma}^n(du|\hat{x},\hat{\mu}^n)\mu(dx)Pr(\hat{\mu}^n|\mu)\\
&\qquad-\int f(x_1)\mathcal{T}^{w^0}(dx_1|x,u,\hat{\mu}^n)\hat{\gamma}^n(du|\hat{x},\hat{\mu}^n)\hat{\mu}^n(dx)Pr(\hat{\mu}^n|\mu)\bigg|\\
&\leq \bigg|\int f(x_1)\mathcal{T}^{w^0}(dx_1|x,u,\mu)\hat{\gamma}^n(du|\hat{x},\hat{\mu}^n)\mu(dx)Pr(\hat{\mu}^n|\mu) \\
&\qquad- \int f(x_1)\mathcal{T}^{w^0}(dx_1|\phi(x),u,\hat{\mu}^n)\hat{\gamma}^n(du|\hat{x},\hat{\mu}^n)\mu(dx)Pr(\hat{\mu}^n|\mu)\bigg|\\
&+ \bigg| \int f(x_1)\mathcal{T}^{w^0}(dx_1|\phi(x),u,\hat{\mu}^n)\hat{\gamma}^n(du|\hat{x},\hat{\mu}^n)\mu(dx)Pr(\hat{\mu}^n|\mu)\\
&\qquad - \int f(x_1)\mathcal{T}^{w^0}(dx_1|x,u,\hat{\mu}^n)\hat{\gamma}^n(du|\hat{x},\hat{\mu}^n)\hat{\mu}^n(dx)Pr(\hat{\mu}^n|\mu)\bigg|\\
& \leq C\left(L_{\mathds{X}} + M_n\right)
\end{align*}
for some constant $C<\infty$. Hence, we have that 
\begin{align*}
& \left| \beta \int K_\beta^*(\mu_1)Pr(d\mu_1|\mu,\hat{\gamma}^n)Pr(\hat{\mu}^n|\mu) -  \beta \int K_\beta^*(\mu_1)Pr(d\mu_1|\hat{\mu}^n,\hat{\gamma}^n)Pr(\hat{\mu}^n|\mu)\right|\\
&\leq \|K_\beta^*\|_{Lip} C \left(L_{\mathds{X}} +m_n\right)
\end{align*}
Furthermore, $\|K_\beta^*\|_{Lip}$ is also bounded with Lemma \ref{inf_lip}.  Hence, combining (\ref{init_term2}) and (\ref{mid_term2}), we can conclude the proof with Lemma \ref{inf_agg}.
\end{proof}

\subsection{Comparison of the Direct Aggregation and the Population Sampling Methods}
We have introduced two methods in this section; one based on direct aggregation of the probability measures (Section \ref{finite_inf_pop}) and another based on population sampling (Section \ref{finite_inf_pop2}).

\begin{itemize}
\item The regret bound presented for the aggregation method  (Section \ref{finite_inf_pop}) is smaller than the one on population sampling method (Section \ref{finite_inf_pop2}), since it is induced by the nearest neighbor map. 
\item Furthermore, for the aggregation method, to calculate the expectation in the Bellman equation via the transition kernel (equation (\ref{trans_agg})), for every given common noise realization, there is only one possible future state to consider. However, for the transition model of the sampling method (equation (\ref{trans_samp})), for every given common noise realization, the probability of each possible distribution of $n$ agents under the future measure needs to be considered to compute the expectation.  Hence the construction of the aggregation method in Section \ref{finite_inf_pop} is simpler.
\item However, for the application of the solution of the method in Section \ref{finite_inf_pop}, one needs to keep track of the perfect feedback, i.e. the state distribution of every agent, which might be hard to follow. For the application of the sampling method introduced in Section \ref{finite_inf_pop2}, instead of observing the perfect feedback, it is enough to observe the state information of a smaller sample of the original population.
\end{itemize}

\appendix

\section{Proofs of Technical Results}

\subsection{Proof of Lemma \ref{val_const}}\label{val_const_sec}

We will use value function approximation through value iterations. We define the sequence of functions $v_k:\mathds{X}^N\to\mathds{R}$ such that 
\begin{align*}
v_{k+1}({\bf x})=\inf_{{\bf u}\in\mathds{U}^N}\bigg({\bf c}({\bf x,u})+\beta E\big[ v_k({\bf X_1})|{\bf x,u}\big]\bigg)
\end{align*}
where $v_0({\bf x})=\inf_{{\bf u}\in\mathds{U}^N}{\bf c}({\bf x,u})$.

We first note that if ${\bf x}=(x^1\dots,x^N)$ , and ${\bf x'}=({x'}^1\dots,{x'}^N)$,  have the same distribution, $\mu_{\bf x}$, then they can be viewed as different orderings of the same state vector. Furthermore, for an action vector ${\bf u}=(u^1\dots,u^N)$, one can construct another action vector ${\bf u'}=({u'}^1\dots,{u'}^N)$, by reordering, such that the pairs $({\bf x,u})$ and $({\bf x',u'})$ have the same distribution. The immediate but key observation for the proof is that for the pairs  $({\bf x,u})$ and $({\bf x',u'})$, we have
\begin{align*}
{\bf c}({\bf x,u})=\frac{1}{N}\sum_{i=1}^Nc(x^i,u^i,\mu_{{\bf x}})=\frac{1}{N}\sum_{i=1}^Nc({x'}^i,{u'}^i,\mu_{{\bf x}})={\bf c}({\bf x',u'}).
\end{align*}
Also, for any noise vector ${\bf w}:=(w^0,w^1,\dots, w^N)$, the state vectors ${\bf x_1}=f({\bf x,u},\mu_{\bf x},{\bf w})$, and ${\bf x'_1}=f({\bf x',u'},\mu_{\bf x},{\bf w})$ (see (\ref{col_dyn})) will have the same distribution since the dynamics governing every agent are identical and hence the agents are exchangeable. In other words, 
\begin{align*}
\mu_{f({\bf x,u},\mu_{\bf x},{\bf w})}=\mu_{f({\bf x',u'},\mu_{\bf x},{\bf w})}
\end{align*}
if the pairs  $({\bf x,u})$ and $({\bf x',u'})$ have the same distribution.

Using these observations, we now prove the result with induction. For $v_0$, if ${\bf{x,x'}}\in \mathds{X}^N$ have the same distribution, $\mu_{\bf x}=\mu_{\bf x'}$, then we clearly have
\begin{align*}
\inf_{{\bf u}\in\mathds{U}^N}\frac{1}{N}\sum_{i=1}^Nc(x^i,u^i,\mu_{{\bf x}})=\inf_{{\bf u}\in\mathds{U}^N}\frac{1}{N}\sum_{i=1}^Nc({x'}^{i},u^i,\mu_{{\bf x'}}).
\end{align*} 
We now assume that the claim is true for $v_k$, i.e. if ${\bf{x,x'}}\in \mathds{X}^N$ have the same distribution, $\mu_{\bf x}=\mu_{\bf x'}$, then $v_k({\bf x})=v_k({\bf x'})$. Note that for ${\bf{x,x'}}\in \mathds{X}^N$ having the same distribution, $\mu_{\bf x}=\mu_{\bf x'}$
\begin{align*}
&v_{k+1}({\bf x})=\inf_{{\bf u}\in\mathds{U}^N}\bigg({\bf c}({\bf x,u})+\beta E\big[ v_k({\bf X_1})|{\bf x,u}\big]\bigg)\\
&v_{k+1}({\bf x'})=\inf_{{\bf u}\in\mathds{U}^N}\bigg({\bf c}({\bf x',u})+\beta E\big[ v_k({\bf X_1})|{\bf x',u}\big]\bigg).
\end{align*}
Suppose that $v_{k+1}({\bf x})<v_{k+1}({\bf x'})$, denoting the minimizer of the first equation by ${\bf u}$, (whose existence is guaranteed under Assumption \ref{main_assmp}), we construct ${\bf u'}$ such that the pairs  $({\bf x,u})$ and $({\bf x',u'})$ have the same distribution. We can the write
\begin{align*}
v_{k+1}({\bf x'})\leq{\bf c}({\bf x',u'})+\beta E\big[ v_k({\bf X_1})|{\bf x',u'}\big]&={\bf c}({\bf x',u'})+\beta \int v_k({\bf x_1})Pr(d{\bf x_1}|{\bf x',u'})\\
&={\bf c}({\bf x',u'})+\beta \int v_k(f({\bf x',u'},{\bf w}))P(d {\bf w})\\
&={\bf c}({\bf x,u})+\beta \int v_k(f({\bf x,u},{\bf w}))P(d {\bf w})\\
&=v_{k+1}({\bf x})
\end{align*}
where $P(dw)$ is the probability measure for the noise vector. We have used the symmetry of the cost function which implies that ${\bf c}({\bf x',u})={\bf c}({\bf x,u})$, and the induction step with the exchangeability of the agents to conclude that $v_k(f({\bf x',u'},{\bf w}))= v_k(f({\bf x,u},{\bf w}))$. Note that this follows from the fact that $f({\bf x',u'},{\bf w})$ and  $f({\bf x,u},{\bf w})$ have the same distribution, and the fact that $v_k$ is constant over the states with same distribution. Thus, we reach to a contradiction. Using a symmetrical argument, we can also conclude that $v_{k+1}({\bf x})>v_{k+1}({\bf x'})$ is not possible either, which implies that $v_{k+1}({\bf x})=v_{k+1}({\bf x'})$.

Finally, we write
\begin{align*}
|J_\beta^{N,*}({\bf x})-J_\beta^{N,*}({\bf x'})|&\leq |J_\beta^{N,*}({\bf x})-v_k({\bf x})|+|v_k({\bf x})-v_k({\bf x'})|+|v_k({\bf x'})-J_\beta^{N,*}({\bf x'})|\\
&\leq 2\|c\|_\infty\frac{\beta^k}{1-\beta}\to0,
\end{align*}
where we have used the fact that Bellman operator is a contraction under the uniform norm with modulus $\beta$ and its fixed point is the optimal value function.

\subsection{Proof of Theorem \ref{thm_meas}}\label{thm_meas_sec}
This result has been proved in \cite{bauerle2021mean}, however, we present a slightly different proof for completeness and because the proof method we use here will help us prove the other results in the paper. 

First, note that by Lemma \ref{val_const}, for any ${\bf x}_0$ that satisfies ${\bf x}_0=\mu_0$, $J_\beta^*({\bf x_0})$ has the same value. We now show that this value is also equal to the optimal value function of the measure valued MDP. 

We prove the result using value iterations. In particular, we will approximate, $J_\beta^{N,*}({x_0})$ and $K_\beta^{N,*}(\mu_0)$ using the iterations:
\begin{align*}
v_{k+1}({\bf x_0})&=\inf_{{\bf u}\in \mathds{U}^N}\bigg\{{\bf c(x_0,u)}+\beta E[v_k({\bf X_1})|{\bf x_0,u}]\bigg\},\\
w_{k+1}({ \mu_0})&=\inf_{\Theta\in U(\mu_0)}\bigg\{{ k(\mu_0,\Theta)}+\beta E[w_k({\bf \mu_1})|{\mu _0,\Theta}]\bigg\},
\end{align*}
where 
\begin{align*}
v_0({\bf x_0})&=\inf_{{\bf u}\in \mathds{U}^N} {\bf c(x_0,u)}\\
w_0({ \mu_0})&=\inf_{\Theta\in U(\mu_0)} { k(\mu_0,\Theta)}.
\end{align*}
We denote the minimizer of $v_0$ by ${\bf u^*}$, and we construct $\Theta$ such that $\Theta=\mu_{\bf (x_0,u^*)}$. We then have that
\begin{align*}
w_0({ \mu_0})\leq k(\mu_0,\Theta)=\frac{1}{N}\sum_{i=1}^Nc(x_0^i,u^{*,i},\mu_0)={\bf c(x_0,u^*)}=v_0({\bf x_0}).
\end{align*}
Conversely, let the minimizer for $w_0$ be $\Theta^*$. We pick some ${\bf u}$ such that $\mu_{\bf x_0,u}=\Theta^*$. We can then write
\begin{align*}
v_0({\bf x_0})\leq  {\bf c(x_0,u)}=\frac{1}{N}\sum_{i=1}^Nc(x_0^i,u^{i},\mu_0)=k(\mu_0,\Theta^*)=w_0({\mu_0}).
\end{align*}
Hence, we have that $v_0({\bf x_0})=w_0({\mu_0})$  for any ${\bf x}_0$ that satisfies ${\bf x}_0=\mu_0$. We now assume that  $v_k({\bf x_0})=w_k({\mu_0})$, and study  $v_{k+1}({\bf x_0})$ and $w_{k+1}({\mu_0})$.

We denote the minimizer of $v_k$ by ${\bf u^*}$, and we construct $\Theta$ such that $\Theta=\mu_{\bf (x_0,u^*)}$. We then have that
\begin{align*}
w_{k+1}({ \mu_0})&\leq k(\mu_0,\Theta)+\beta E[w_k({\bf \mu_1})|{\mu _0,\Theta}]\\
&= k(\mu_0,\Theta)+\beta\int w_k(\mu_1)\eta(d\mu_1|\mu_0,\Theta)\\
&= k(\mu_0,\Theta)+\beta\int \int_{{\bf x_1}:\mu_{\bf x_1}=\mu_1}w_k({ \mu_1})Pr(d{\bf x_1|x_0,u^*})\\
&={\bf c(x_0,u^*)}+\beta\int v_k({\bf x_1})Pr(d{\bf x_1}|{\bf x_0,u^*})\\
&=v_k({\bf x_0})
\end{align*}
where for the second equality, we used the definition of $\eta$, and for the third equality we used the induction argument.

Conversely, we let the minimizer for $w_k$ be $\Theta^*$. We pick some ${\bf u}$ such that $\mu_{\bf x_0,u}=\Theta^*$. We can then write
\begin{align*}
v_k({\bf x_0})&\leq {\bf c(x_0,u)}+\beta E[v_k({\bf X_1})|{\bf x_0,u}]\\
&=k(\mu_0,\Theta^*)+\beta E[w_k(\mu_1)|{\bf x_0,u}]\\
&=k(\mu_0,\Theta^*)+\beta E[w_k(\mu_1)|{\mu_0,\Theta^*}]\\
&=w_k(\mu_0)
\end{align*}
where for the first equality, we used the induction argument, and for the second equality, we used the fact that the probability distribution of $\mu_1$ is fully determined by the distribution of the pair $({\bf x_0,u})$. Hence, we have that $v_k({\bf x_0})=w_k({\mu_0})$  for any ${\bf x}_0$ that satisfies ${\bf x}_0=\mu_0$. Then, the first part of the result is completed by noting that 
\begin{align*}
|J_\beta^{N,*}({\bf x})-v_k({\bf x})|\leq \|c\|_\infty\frac{\beta^k}{1-\beta},\quad |K_\beta^{N,*}({\mu})-w_k({\mu})|\leq \|c\|_\infty\frac{\beta^k}{1-\beta}.
\end{align*}

For the second part of the result,  we first show the construction of the agent-level policy. For any given state vector ${\bf x}\in\mathds{X}^N$, $\Theta=g(\mu_{\bf x})$ assigns a distribution on $\mathds{X}\times\mathds{U}$ for the agents, hence the agents can choose their policies $\gamma^i:\mathds{X}\times\P_N(\mathds{X})\to \mathds{U}$ to realize this distribution. 

We now define the following Bellman operators, $T_\gamma$, and $\hat{T}_g$ for admissible policies $\gamma$ (possibly randomized) and $g$, such that for some measurable and bounded $v\in\M_b(\mathds{X}^N)$ and $w\in\M_b(\P_N(\mathds{X}))$
\begin{align*}
T_\gamma(v)({\bf x})&:={\bf c(x,\gamma)}+\beta E[v({\bf X_1})|{\bf x,\gamma}],\\
\hat{T}_g(w)(\mu)&:=k(\mu,g(\mu))+\beta E[w(\mu_1)|\mu,g(\mu)].
\end{align*}
where ${\bf c(x,\gamma)}:=\frac{1}{N}\sum_{i=1}^N\int_{\mathds{U}}c(x^i,u)\gamma(du|x^i)$ denotes the stage-wise cost function induced under the agent-level policy, which by construction is equal to $k(\mu_{\bf x},\Theta)$, and $E[v({\bf X_1})|{\bf x,\gamma}]$ denotes the conditional expectation induced under the agent-level policy $\gamma$.

Note that these operators are contraction under the uniform bound, thus $T_\gamma^k(v)({\bf x})\to J^N_\beta({\bf x},\gamma)$, and $\hat{T}_g^k(w)(\mu)\to K^N_\beta(\mu,g)$, where $T_\gamma^k$, $\hat{T}_g^k$ denote the resulting operators from $k$ consecutive application of the same operator.

Furthermore, if $v({\bf x})={\bf c(x,\gamma)}$ and $w(\mu_{\bf x})=k(\mu_{\bf x},g(\mu_{\bf x}))$, by construction of $\gamma$, using the identical arguments used for the proof of part (i), one can show that
\begin{align*}
T_\gamma^k(v)({\bf x})=\hat{T}_g^k(w)(\mu_{\bf x}), \text{ for all } k
\end{align*}
which proves that $J_\beta^N({\bf x},\gamma)=K^N_\beta(\mu_{\bf x},g)$ for all ${\bf x}\in\mathds{X}^N$ with same distribution.

Finally, the Bellman equation for $K_\beta^{N,*}(\mu_0)$:
\begin{align*}
K_\beta^{N,*}({ \mu_0})&=\inf_{\Theta\in U(\mu_0)}\bigg\{{ k(\mu_0,\Theta)}+\beta E[K_\beta^{N,*}({\bf \mu_1})|{\mu _0,\Theta}]\bigg\}.
\end{align*}
Under Assumption \ref{main_assmp}, the measurable selection conditions apply (\cite{brown1973measurable}), and there exists some $g:\P_N(\mathds{X})\to \P_N(\mathds{X}\times\mathds{U})$, which achieves the minimum cost. Hence, the result follows from part is complete.

\subsection{Proof of Theorem \ref{act_fin}}\label{act_fin_proof}

We start by writing the Bellman equations for the value functions:
\begin{align*}
&\tilde{J}_\beta^{N,*}({\bf x}_0)=\inf_{\hat{\bf u}\in\hat{\mathds{U}^N}}\left\{{\bf c(x_0,\hat{u})} + \beta E\left[\tilde{J}_\beta^{N,*}({\bf X}_1)| {\bf x}_0 , \hat{\bf u} \right] \right \}\\
&J_\beta^{N,*}({\bf x}_0)=\inf_{{\bf u}\in{\mathds{U}^N}}\left\{{\bf c(x_0,{u})} + \beta E\left[{J}_\beta^{N,*}({\bf X}_1)| {\bf x}_0 , {\bf u} \right] \right \}
\end{align*}

We denote the minimizes by $\hat{\bf u}$ and ${\bf u}$ for simplicity. Note that $\tilde{J}_\beta^{N,*}({\bf x}_0)\geq{J}_\beta^{N,*}({\bf x}_0)$ since the control space is smaller. If we denote by $\phi({\bf u})$ the coordinate-wise nearest neighbor map such that, each coordinate is mapped to the nearest element in $\hat{\mathds{U}}$, then application of $\phi({\bf u})$  results in a greater cost than $\hat{\bf u}$. We can then write
\begin{align*}
\tilde{J}_\beta^{N,*}({\bf x}_0)-{J}_\beta^{N,*}({\bf x}_0)&\leq  {\bf c(x_0,\phi({\bf u}))} +  \beta E\left[\tilde{J}_\beta^{N,*}({\bf X}_1)| {\bf x}_0 , \phi({\bf u}) \right]\\
& \quad-{\bf c(x_0,{u})} - \beta E\left[{J}_\beta^{N,*}({\bf X}_1)| {\bf x}_0 , {\bf u} \right]\\
&\quad \pm \beta E\left[{J}_\beta^{N,*}({\bf X}_1)| {\bf x}_0 , \phi({\bf u}) \right]
\end{align*}

For the cost function difference we have
\begin{align*}
 {\bf c(x_0,\phi({\bf u}))} -{\bf c(x_0,{u})} &= \frac{1}{N}\sum_{i=1}^N\left| c(x^i,\phi(u^i),\mu_{\bf x_0}) - c(x^i,u^i,\mu_{\bf x_0})\right|\\
&\leq  \frac{1}{N}\sum_{i=1}^N K_c \left|\phi(u^i)-u^i\right| \leq K_c L_{\mathds{U}}
\end{align*}
Furthermore, we have 
\begin{align*}
& \left| E\left[{J}_\beta^{N,*}({\bf X}_1)| {\bf x}_0 , {\bf u}\right]  -  E\left[{J}_\beta^{N,*}({\bf X}_1)| {\bf x}_0 , \phi({\bf u}) \right]\right| \\
&=  \left|\int K_\beta^{N,*}(\mu_{\bf f(x_0,u,w)})Pr(d{\bf w}) -  \int K_\beta^{N,*}(\mu_{\bf f(x_0,\phi(u),w)})Pr(d{\bf w})\right| \\
&\leq \| K_\beta^{N,*}\|_{Lip} W_1\left(\mu_{\bf f(x_0,u,w)}, \mu_{\bf f(x_0,\phi(u),w)} \right)
\end{align*}
$ \| K_\beta^{N,*}\|_{Lip}\leq \frac{2K_c \beta}{1-2K_f \beta}$ is bounded by Lemma \ref{val_lip}. For $W_1\left(\mu_{\bf f(x_0,u,w)}, \mu_{\bf f(x_0,\phi(u),w)} \right)$, since both are empirical measures, we know that the coupling achieves the Wasserstein distance is the one that pairs the state vectors ${\bf f(x_0,u,w)}$ and ${\bf f(x_0,\phi(u),w)}$ such that the average distance between them is minimized. In particular, we have that 
\begin{align*}
&W_1\left(\mu_{\bf f(x_0,u,w)}, \mu_{\bf f(x_0,\phi(u),w)} \right)\leq \frac{1}{N}\sum_{i=1}^N\left| f(x^i,\phi(u^i),\mu_{\bf x_0},w^i,w^0) - f(x^i,u^i,\mu_{\bf x_0},w^i,w^0)\right|\\
&\leq K_f L_{\mathds{U}}.
\end{align*}
Combining everything:
\begin{align*}
\tilde{J}_\beta^{N,*}({\bf x}_0)-{J}_\beta^{N,*}({\bf x}_0)&\leq  K_c L_{\mathds{U}} +\beta \sup_{\bf x} \left|\tilde{J}_\beta^{N,*}({\bf x})-{J}_\beta^{N,*}({\bf x})\right| + \frac{2K_cK_f\beta L_{\mathds{U}}}{1-2K_f\beta}
\end{align*}
which implies that 
\begin{align*}
\sup_{\bf x} \left|\tilde{J}_\beta^{N,*}({\bf x})-{J}_\beta^{N,*}({\bf x})\right|  \leq  \frac{K_c}{(1-2K_f\beta)(1-\beta)}L_{\mathds{U}}
\end{align*}

\subsection{Proof of Lemma \ref{cost_kernel_bound}}\label{cost_kernel_bound_proof}
Since $\mu,\mu'$ are empirical distributions, there always exist state vectors ${\bf x}=(x^1,\dots,x^N)$ and ${\bf \hat{x}}=(\hat{x}^1,\dots,\hat{x}^N)$ such that $\mu_{\bf x}=\mu,\mu_{\bf \hat{x}}=\mu'$. Furthermore, this holds for any permutation of the vectors ${\bf x,\hat{x}}$. If we denote by $\rho({\bf x})$ the possible permutations of ${\bf x}$, we then have $\mu_{\rho({\bf x}) }=\mu$ for every different permutation $\rho$.

The first order Wasserstein distance between $\mu$ and $\mu'$, can be written as 
\begin{align*}
W_1(\mu,\mu')=\inf_{P\in\Gamma(\mu,\mu')} E_P[|X-\hat{X}|]=\inf_{P\in\Gamma(\mu,\mu')}\sum_{j,k} |x^j-\hat{x}^k|P(X=x^j,\hat{X}=\hat{x}^k)
\end{align*}
where the infimum is over all couplings of $\mu$ and $\mu'$. The last equality follows as $\mu$ and $\mu'$ are the empirical measures induced by the vectors ${\bf x,x'}$ (or permutations of them). Following the last term, the coupling that achieves the minimum must concentrate on the closest pairings. Hence, we have that 
\begin{align}\label{min_was}
W_1(\mu,\mu')=\min_\rho\frac{1}{N}\sum_{i=1}^N|x^i-\rho(\hat{x}^i)|
\end{align}
where we fix  the vector ${\bf x}$ and the minimum goes over all possible  permutations, $\rho$, of the vector ${\bf \hat{x}}$. Since, there are finitely many different permutations, the minimum can be achieved. For the rest of the proof, we will use the state vectors ${\bf x,\hat{x}}$ that achieve the minimum.

For the cost function, using the fact that the empirical distribution of the control actions are identical and the assumption that $c(x,u,\mu)$ is Lipschitz in $x$ and $\mu$, we can write
\begin{align*}
&|k(\mu,\Theta)-k(\mu',\Theta')|=\bigg|\frac{1}{N}\sum_{i=1}^Nc(x^i,u^i,\mu)-c(\hat{x}^i,u^i,\mu')\bigg|\\
&\leq \frac{1}{N}\sum_{i=1}^N K_c\left(|x^i-\hat{x}^i|+W_1(\mu,\mu')\right)\\
&= 2K_c W_1(\mu,\mu')
\end{align*}
where $x^i$'s and $\hat{x}^i$'s are the elements of the state vectors that imply the empirical measures $\mu$ and $\mu'$ and that achieve the minimum in (\ref{min_was}). 

For the transition kernel, we will use a similar argument. We first note that by construction $\eta(\cdot|\mu,\Theta)=Pr(\mu_1\in\cdot|{\bf x,u})$ for any $\mu_{\bf x}=\mu$ and $\mu_{\bf x,u}=\Theta$. We can then write
\begin{align*}
&W_1(\eta(\cdot|\mu,\Theta),\eta(\cdot|\mu',\Theta'))=\sup_{Lip(h)\leq 1}\left|\int h(\mu_1)\eta(d\mu_1|\mu,\Theta)-\int h(\mu_1)\eta(d\mu_1|\mu',\Theta')\right|\\
&=\sup_{Lip(h)\leq 1}\left|\int h(\mu_1)\eta(d\mu_1|{\bf x,u})-\int h(\mu_1)\eta(d\mu_1|{\bf \hat{x},u})\right|\\
&=\sup_{Lip(h)\leq 1}\left|\int h(\mu_1({\bf x,u},\mu,{\bf w}))P(d{\bf w})-\int h(\mu_1({\bf \hat{x},u},\mu',{\bf w}))P(d{\bf w})\right|\\
&=\sup_{Lip(h)\leq 1}\left|\int h(\mu_1({\bf x,u},\mu,{\bf w}))- h(\mu_1({\bf \hat{x},u},\mu',{\bf w}))P(d{\bf w})\right|\\
&\leq \int W_1\left(\mu_1({\bf x,u},\mu,{\bf w}), \mu_1({\bf \hat{x},u},\mu',{\bf w})\right)P(d{\bf w})\\
&=\int\sup_{Lip(h)\leq 1}\left|\frac{1}{N}\sum_{i=1}^N h(f(x^i,u^i,\mu,w^i))-h(f(\hat{x}^i,u^i,\mu',w^i))\right|P(d{\bf w})\\
&\leq \frac{1}{N}\sum_{i=1}^N K_f\left(|x^i-\hat{x}^i| + W_1(\mu,\mu')\right)\\
&=  2K_f W_1(\mu,\mu').
\end{align*}

\subsection{Proof of Lemma \ref{val_lip}}\label{val_lip_proof}
We start by writing the Bellman equation of the value functions
\begin{align*}
&K_\beta^{N,*}(\mu)=k(\mu,\Theta)+\beta\int_{\mu_1}K_\beta^{N,*}(\mu_1)\eta(d\mu_1|\mu,\Theta),\\
&K_\beta^{N,*}(\mu')=k(\mu',\Theta)+\beta\int_{\mu_1}K_\beta^{N,*}(\mu_1)\eta(d\mu_1|\mu',\Theta'),
\end{align*}
where we denote the optimal actions achieving the minimum on the right side of the Bellman equation by $\Theta$ and $\Theta'$ respectively for $\mu$ and $\mu'$, note that the existence of the minimizers is guaranteed under Assumption \ref{main_assmp}.

Note that $\mu,\mu',\Theta,\Theta'$ are empirical measures on $\mathds{X}$ and $\mathds{X}\times\mathds{U}$, hence we can find ${\bf x,x',u,u'}$, vectors with length N, such that $\mu_{\bf x}=\mu,\mu_{\bf x'}=\mu'$, $\mu_{\bf (x,u)}=\Theta,\mu_{\bf (x',u')}=\Theta'$ where ${\bf x,x'}$ are chosen in accordance with Lemma \ref{cost_kernel_bound}. 

We will first assume that $K_\beta^{N,*}(\mu)>K_\beta^{N,*}(\mu')$. Since, $\Theta$ is the optimal action for the measure $\mu$ if we use $\theta=\mu_{\bf (x,u')}$, that is the distribution of the pair ${\bf x,u'}$ where ${\bf u'}$ is the action vector resulting in $\Theta'$, we then get a greater cost. Hence, we can write that 
\begin{align*}
K_\beta^{N,*}(\mu)-K_\beta^{N,*}(\mu')\leq &k(\mu,\theta)+\beta\int_{\mu_1}K_\beta^{N,*}(\mu_1)\eta(d\mu_1|\mu,\theta)\\
&- k(\mu',\Theta')-\beta\int_{\mu_1}K_\beta^{N,*}(\mu_1)\eta(d\mu_1|\mu',\Theta')\\
&\leq 2K_c W_1(\mu,\mu')+2K_f \beta\|K_\beta^{N,*}\|_{Lip}W_1(\mu,\mu'),
\end{align*}
where the last inequality follows from Lemma \ref{cost_kernel_bound}. Hence, by rearranging the terms, we get 
\begin{align*}
K_\beta^{N,*}(\mu)-K_\beta^{N,*}(\mu')\leq \frac{2K_c}{1-2K_f\beta}W_1(\mu,\mu').
\end{align*}
The case $K_\beta^{N,*}(\mu)<K_\beta^{N,*}(\mu')$ follows from identical steps.

\subsection{Proof of Proposition \ref{key_lem}}\label{key_lem_proof}
We first note that for some $\mu_j\in\P(\hat{\mathds{X}})$ the Bellman equation for the finite model value function can be written as
\begin{align*}
\hat{K}_\beta^{N,*}(\mu_j)=\inf_{\hat{\Theta}}\bigg\{\hat{k}(\mu_j,\hat{\Theta})+\beta\sum_{\mu_1}\hat{K}_\beta^{N,*}(\mu_1)\hat{\eta}(\mu_1|\mu_j,\hat{\Theta})\bigg\}.
\end{align*}
If we denote the minimizer for the right hand side by $\hat{\Theta}^*$, then when the value function is extended  over $\P_N(\mathds{X})$, by making it constant over the subsets $A_j$, we can write for any $\mu\in A_j$
\begin{align*}
\hat{K}_\beta^{N,*}(\mu)=\int_{A_j}k(\mu',\Theta_{\mu'})\hat{\pi}_j(d\mu')+\beta\int_{A_j}\int_{\mu_1}\hat{K}_\beta^{N,*}(\mu_1)\eta(d\mu_1|\mu',\Theta_{\mu'})\hat{\pi}_j(d\mu'),
\end{align*}
where for $\hat{\Theta}^*(dx,du)=\gamma(du|x)\mu_j(dx)$, $\Theta_{\mu'}(dx,du)=\gamma(du|\phi(x))\mu'(dx)$. 

Furthermore, the Bellman equation for the value function of the original model, $K_\beta^{N,*}(\mu)$, is
\begin{align*}
K_\beta^{N,*}(\mu)=k(\mu,\Theta^*)+\beta\int_{\mu_1}K_\beta^{N,*}(\mu_1)\eta(d\mu_1|\mu,\Theta^*),
\end{align*}
where we used $\Theta^*$ for the minimizer of the right side.

We first assume that $\hat{K}_\beta^{N,*}(\mu)>K_\beta^{N,*}(\mu)$. Note that the minimizer for the finite model is $\hat{\Theta}^*\in\P_N(\hat{\mathds{X}}\times\mathds{U})$, and we can find state and action vectors ${\bf \hat{x},\hat{u}}$ such that $\mu_{\bf (\hat{x},\hat{u})}=\hat{\Theta}^*$. The minimizer for the original model is $\Theta^*\in\P_N(\mathds{X}\times\mathds{U})$, and we can find state and action vectors ${\bf (x,u)}$  (in accordance with Lemma \ref{cost_kernel_bound})  such that $\mu_{\bf (x,u)}=\Theta^*$. Furthermore, if we use any other empirical measure on the space $\hat{\mathds{X}}\times\mathds{U}$ we will get a greater cost for the finite model. In particular, we define $\hat{\Theta}\in\P_N(\hat{\mathds{X}}\times\mathds{U})$, such that $\hat{\Theta}=\mu_{\bf (\hat{x},u)}$,
that is the marginal empirical distribution of the states on the finite set $\hat{\mathds{X}}$ stays the same, however, the  marginal empirical distribution of the actions are chosen to be the same as the ones coming from the minimizer action of the original model. By construction of $\Theta_{\mu'}$, we can also find ${\bf x',u}$ such that $\phi({\bf x'})={\bf \hat{x}}$, where $\phi$ is the discretization map, and $\mu_{\bf (x',u)}=\Theta_{\mu'}$.  We can then write:
\begin{align*}
&\hat{K}_\beta^{N,*}(\mu)-K_\beta^{N,*}(\mu)\leq \int_{A_j}k(\mu',\Theta_{\mu'})\hat{\pi}_j(d\mu')+\beta\int_{A_j}\int_{\mu_1}\hat{K}_\beta^{N,*}(\mu_1)\eta(d\mu_1|\mu',\Theta_{\mu'})\hat{\pi}_j(d\mu')\\
&\hspace{3cm}-k(\mu,\Theta^*)-\beta\int_{\mu_1}K_\beta^{N,*}(\mu_1)\eta(d\mu_1|\mu,\Theta^*).
\end{align*}
We now study the differences in the above term separately. 
\begin{align*}
&\int_{A_j}k(\mu',\Theta_{\mu'})\hat{\pi}_j(d\mu')-k(\mu,\Theta^*)\\
&\leq \int_{A_j}\left|k(\mu',\Theta_{\mu'})-k(\mu,\Theta^*)\right|\hat{\pi}_j(d\mu')\\
&\leq 2K_c \sup_{\mu'\in A_j}W_1(\mu,\mu')
\end{align*}
where the last inequality follows from Lemma \ref{cost_kernel_bound}, as  the empirical distributions of the control actions are identical.

For the second difference, we write
\begin{align*}
&\beta\int_{A_j}\int_{\mu_1}\hat{K}_\beta^{N,*}(\mu_1)\eta(d\mu_1|\mu',\Theta_{\mu'})\hat{\pi}_j(d\mu')-\beta\int_{\mu_1}K_\beta^{N,*}(\mu_1)\eta(d\mu_1|\mu,\Theta^*)\\
&=\beta\int_{A_j}\int_{\mu_1}\hat{K}_\beta^{N,*}(\mu_1)\eta(d\mu_1|\mu',\Theta_{\mu'})\hat{\pi}_j(d\mu')-\beta\int_{A_j}\int_{\mu_1}K_\beta^{N,*}(\mu_1)\eta(d\mu_1|\mu',\Theta_{\mu'})\hat{\pi}_j(d\mu')\\
&\qquad+\beta\int_{A_j}\int_{\mu_1}K_\beta^{N,*}(\mu_1)\eta(d\mu_1|\mu',\Theta_{\mu'})\hat{\pi}_j(d\mu')-\beta\int_{\mu_1}K_\beta^{N,*}(\mu_1)\eta(d\mu_1|\mu,\Theta^*)\\
&\leq \beta\sup_\mu \left|\hat{K}_\beta^{N,*}(\mu)-K_\beta^{N,*}(\mu)\right|+\|K_\beta^{N,*}\|_{Lip}\beta2K_f \sup_{\mu'\in A_j}W_1(\mu,\mu'),
\end{align*}
where the last line from Lemma \ref{cost_kernel_bound}. Thus, combining what we have so far:
\begin{align*}
\hat{K}_\beta^{N,*}(\mu)-K_\beta^{N,*}(\mu)&\leq \frac{2K_c+2K_f\beta\|K_\beta^{N,*}\|_{Lip}}{1-\beta}\sup_{\mu'\in A_j}W_1(\mu,\mu')\\
&\leq\frac{2K_c+2K_f\beta\|K_\beta^{N,*}\|_{Lip}}{1-\beta}L_{\mathds{X}}
\end{align*}
we used Lemma \ref{quant_lem} for the last step. The result then follows from Lemma \ref{val_lip}.

The case  $\hat{K}_\beta^{N,*}(\mu)<K_\beta^{N,*}(\mu)$ follows from almost identical steps. Hence, the proof is complete.

\subsection{Proof of Lemma \ref{inf_agg}}\label{inf_agg_proof}
We start by writing
\begin{align*}
\left|\hat{K}_\beta^*(\hat{\mu}^n)-K_\beta^*(\mu)\right|\leq &\left|\hat{K}_\beta^*(\hat{\mu}^n)-\hat{K}_\beta^*(\hat{\mu})\right|+\left|\hat{K}_\beta^*(\hat{\mu})-K_\beta^*(\mu)\right|
\end{align*}
where the second term is bounded by Proposition \ref{inf_disc}. For the first term, we write the following Bellman equations:
\begin{align*}
&\hat{K}_\beta^*(\hat{\mu}^n)=\int c(x,u,\hat{\mu}^n)\hat{\gamma}^n(du|x,\hat{\mu}^n)\hat{\mu}^n(dx) + \beta \int \hat{K}_\beta^*(\rho(\hat{\mu}_1))Pr(d\hat{\mu}_1|\hat{\mu}^n,\hat{\gamma}^n)\\
&\hat{K}_\beta^*(\hat{\mu})=\int c(x,u,\hat{\mu})\hat{\gamma}(du|x,\hat{\mu})\hat{\mu}(dx) + \beta \int \hat{K}_\beta^*(\hat{\mu}_1)Pr(d\hat{\mu}_1|\hat{\mu},\hat{\gamma})
\end{align*}
where $\hat{\gamma}^n(\cdot|x,\hat{\mu}^n)$ and $\hat{\gamma}(du|x,\hat{\mu})$ are the optimal agent level actions. To analyze the difference, we will use the same agent level policy for both models (say $\gamma(\cdot|x)$), as we did in previous proofs without loss of generality, and write
\begin{align*}
&\left|\hat{K}_\beta^*(\hat{\mu}^n)-\hat{K}_\beta^*(\hat{\mu})\right|\leq \left|\int c(x,u,\hat{\mu}^n)\gamma(du|x)\hat{\mu}^n(dx) -\int c(x,u,\hat{\mu})\gamma(du|x)\hat{\mu}(dx) \right|\\
&+ \bigg| \beta \int \hat{K}_\beta^*(\rho(\hat{\mu}_1))Pr(d\hat{\mu}_1|\hat{\mu}^n,\gamma)-\beta \int \hat{K}_\beta^*(\hat{\mu}_1)Pr(d\hat{\mu}_1|\hat{\mu},\gamma)\\
&\qquad\pm \beta \int \hat{K}_\beta^*(\hat{\mu}_1)Pr(d\hat{\mu}_1|\hat{\mu}^n,\gamma) \bigg|\\
&\leq \|c\|_\infty W_1\left(\hat{\mu}^n,\hat{\mu}\right) + \beta \|\hat{K}_\beta^*\|_{Lip} W_1\left(\hat{\mu}^n,\hat{\mu}\right) + \beta \sup_{\hat{\mu}}\left|\hat{K}_\beta^*(\rho(\hat{\mu})) - \hat{K}_\beta^*(\hat{\mu})\right|
\end{align*}
Hence, we can write that
\begin{align*}
\sup_{\hat{\mu}}\left|\hat{K}_\beta^*(\rho(\hat{\mu})) - \hat{K}_\beta^*(\hat{\mu})\right|& \leq \frac{\|c\|_\infty + \beta \|\hat{K}_\beta^*\|_{Lip}}{1-\beta}\sup_{\hat{\mu}}W_1\left(\hat{\mu}^n,\hat{\mu}\right)\\
&=\frac{\|c\|_\infty + \beta \|\hat{K}_\beta^*\|_{Lip}}{1-\beta}m_n
\end{align*}
hence the proof is complete as $\|\hat{K}_\beta^*\|_{Lip}$ can be shown to be bounded using similar arguments.

\subsection{Proof of Lemma \ref{pop_samp_lem}}\label{pop_samp_lem_proof}

The proof steps are similar to the proof of Lemma \ref{inf_agg}. We start by writing
\begin{align*}
\left|\hat{K}_\beta^*(\hat{\mu}^n)-K_\beta^*(\mu)\right|\leq &\left|\hat{K}_\beta^*(\hat{\mu}^n)-\hat{K}_\beta^*(\hat{\mu})\right|+\left|\hat{K}_\beta^*(\hat{\mu})-K_\beta^*(\mu)\right|
\end{align*}
where the second term is bounded by Proposition \ref{inf_disc}. For the first term, we write the following Bellman equations:
\begin{align*}
&\hat{K}_\beta^*(\hat{\mu}^n)=\int c(x,u,\hat{\mu}^n)\hat{\gamma}^n(du|x,\hat{\mu}^n)\hat{\mu}^n(dx) + \beta \int \hat{K}_\beta^*(\hat{\mu}^n_1))Pr(d\hat{\mu}_1^n|\hat{\mu}^n,\hat{\gamma}^n)\\
&\hat{K}_\beta^*(\hat{\mu})=\int c(x,u,\hat{\mu})\hat{\gamma}(du|x,\hat{\mu})\hat{\mu}(dx) + \beta \int \hat{K}_\beta^*(\hat{\mu}_1)Pr(d\hat{\mu}_1|\hat{\mu},\hat{\gamma})
\end{align*}
where $\hat{\gamma}^n(\cdot|x,\hat{\mu}^n)$ and $\hat{\gamma}(du|x,\hat{\mu})$ are the optimal agent level actions. To analyze the difference, we will use the same agent level policy for both models (say $\gamma(\cdot|x)$), and write
\begin{align*}
&\left|\hat{K}_\beta^*(\hat{\mu}^n)-\hat{K}_\beta^*(\hat{\mu})\right|\leq \left|\int c(x,u,\hat{\mu}^n)\gamma(du|x)\hat{\mu}^n(dx) -\int c(x,u,\hat{\mu})\gamma(du|x)\hat{\mu}(dx) \right|\\
&+ \bigg| \beta \int \hat{K}_\beta^*(\hat{\mu}_1^n)Pr(d\hat{\mu}^n_1|\hat{\mu}^n,\gamma)-\beta \int \hat{K}_\beta^*(\hat{\mu}_1)Pr(d\hat{\mu}_1|\hat{\mu},\gamma)\\
&\qquad\pm \beta \int \hat{K}_\beta^*(\hat{\mu}_1)Pr(d\hat{\mu}_1|\hat{\mu}^n,\gamma) \bigg|\\
&\leq \|c\|_\infty W_1\left(\hat{\mu}^n,\hat{\mu}\right) + \beta \|\hat{K}_\beta^*\|_{Lip} W_1\left(\hat{\mu}^n,\hat{\mu}\right) \\
&\qquad+ \beta\bigg|\int \hat{K}_\beta^*(\hat{\mu}_1^n)Pr(d\hat{\mu}^n_1|\hat{\mu}^n,\gamma)-\beta \int \hat{K}_\beta^*(\hat{\mu}_1)Pr(d\hat{\mu}_1|\hat{\mu}^n,\gamma) \bigg|
\end{align*}
for the last term, note that by construction $Pr(d\hat{\mu}^n_1|\hat{\mu}^n,\gamma)$ samples empirical measures from the probability measure $\mathcal{T}^{w^0}(\cdot|x,u,\hat{\mu}^n)\gamma(du|x)\hat{\mu}^n(dx)$ for every given common noise. Hence, for the last term we have that
\begin{align*}
\beta\bigg|\int \hat{K}_\beta^*(\hat{\mu}_1^n)Pr(d\hat{\mu}^n_1|\hat{\mu}^n,\gamma)-\beta \int \hat{K}_\beta^*(\hat{\mu}_1)Pr(d\hat{\mu}_1|\hat{\mu}^n,\gamma) \bigg|\leq \beta \sup_{\hat{\mu}} E\left[\left|\hat{K}_\beta^*(\hat{\mu}^n)-\hat{K}_\beta^*(\hat{\mu})\right|\right]
\end{align*}

Hence, we can write that
\begin{align*}
\sup_{\hat{\mu}}E\left[\left|\hat{K}_\beta^*(\hat{\mu}^n) - \hat{K}_\beta^*(\hat{\mu})\right|\right]& \leq \frac{\|c\|_\infty + \beta \|\hat{K}_\beta^*\|_{Lip}}{1-\beta}\sup_{\hat{\mu}}E\left[W_1\left(\hat{\mu}^n,\hat{\mu}\right)\right]\\
&=\frac{\|c\|_\infty + \beta \|\hat{K}_\beta^*\|_{Lip}}{1-\beta}M_n
\end{align*}
hence the proof is complete as $\|\hat{K}_\beta^*\|_{Lip}$ can be shown to be bounded using similar arguments.



\bibliographystyle{sn-basic}


\bibliography{mfc_bibliography}

\section{Statements and Declarations}
\subsection{Funding}
E. Bayraktar is partially supported by the National Science Foundation under grant DMS-2106556 and by the Susan M. Smith chair.

\subsection{Competing Interests}
The authors have no relevant financial or non-financial interests to disclose.

\end{document}